\long\def\symbolfootnote[#1]#2{\begingroup
\def\thefootnote{\fnsymbol{footnote}}\footnote[#1]{#2}\endgroup}
\newtheorem{maintheorem}{Theorem}
\newtheorem{maincorollary}[maintheorem]{Corollary}
\newtheorem{theorem}{Theorem}[section]
\newtheorem{lemma}[theorem]{Lemma}
\newtheorem{prop}[theorem]{Proposition}
\newtheorem{cor}[theorem]{Corollary}
\theoremstyle{definition}
\newtheorem{rem}[theorem]{Remark}
\newtheorem{defin}[theorem]{Definition}
\DeclareMathOperator{\CAT}{CAT}
\DeclareMathOperator{\Tame}{Tame}
\DeclareMathOperator{\STame}{STame}
\DeclareMathOperator{\Aut}{Aut}
\DeclareMathOperator{\SAut}{SAut}
\DeclareMathOperator{\Bir}{Bir}
\DeclareMathOperator{\GL}{GL}
\DeclareMathOperator{\Stab}{Stab}
\DeclareMathOperator{\Jac}{Jac}
\DeclareMathOperator{\inn}{in}
\DeclareMathOperator{\out}{out}
\DeclareMathOperator{\non}{non}
\newcommand{\A}{{\mathbf{k}}}
\newcommand{\TA}{\Tame(\A^3)}
\newcommand{\STA}{\STame(\A^3)}
\newcommand{\K}{\mathbf{k}}
\newcommand{\Comp}{\mathcal{C}}
\newcommand{\C}{\mathbb{C}}
\newcommand{\Z}{\mathbb{Z}}
\newcommand{\R}{\mathbb{R}}
\newcommand{\LL}{\mathcal{L}}
\newcommand{\T}{\mathcal{T}}
\newcommand{\D}{\mathcal{D}}
\newcommand{\p}{\mathbb{P}}
\renewcommand{\le}{\leqslant}
\renewcommand{\ge}{\geqslant}
\renewcommand{\leq}{\leqslant}
\renewcommand{\geq}{\geqslant}
\newcommand{\mygraph}[1]{\xybox{\xygraph{#1}}}
\newcommand{\typeone}{\xy*\cir<2pt>{}\endxy}
\newcommand{\typetwo}{\bullet}
\newcommand{\typethree}{\rule[.2ex]{0.9ex}{0.9ex}}
\newcommand{\mycolor}{blue}
\begin{document}

\title[Acylindrical hyperbolicity of Tame$(\A^3)$]
{Acylindrical hyperbolicity of the three-dimensional tame automorphism group}

\author[S.~Lamy]{St\'ephane Lamy$^{\dag}$}
\thanks{$^{\dag}$ This research was partially supported by ANR Grant ``BirPol''
ANR-11-JS01-004-01.}
\address{Institut de Math\'ematiques de Toulouse, Universit\'e Paul Sabatier,
118 route de Narbonne, 31062 Toulouse Cedex 9, France}
\email{slamy@math.univ-toulouse.fr}

\author[P.~Przytycki]{Piotr Przytycki$\ddag$}
\address{Dept. of Math. \& Stats., McGill University, Montreal, Quebec, Canada H3A 0B9}
\email{piotr.przytycki@mcgill.ca}
\thanks{$\ddag$ Partially supported by NSERC, FRQNT, National Science Centre DEC-2012/06/A/ST1/00259, and UMO-2015/\-18/\-M/\-ST1/\-00050.}

\maketitle

\begin{abstract}
\noindent
We prove that the group $\STA$ of special tame automorphisms  of
the affine 3-space is not simple, over any base field of characteristic zero.
Our proof is based on the study of the geometry of a 2-dimensional
simply-connected simplicial complex $\Comp$ on which the tame automorphism
group acts naturally.
We prove that $\Comp$ is contractible and Gromov-hyperbolic, and we prove that
$\TA$ is acylindrically hyperbolic by finding explicit loxodromic weakly
proper discontinuous elements.
\end{abstract}

\section{Introduction}
\label{sec:intro}

The \emph{tame automorphism group} of the affine space $\A^3$, over a base field
$\K$ of characteristic zero, is the subgroup of the polynomial automorphism
group $\Aut(\A^3)$ generated by the affine and elementary automorphisms:
$$\TA = \langle A_3, E_3 \rangle,$$
where
\begin{align*}
A_3 &= \GL_3(\K) \ltimes \K^3 , \ \mathrm{and}\\
E_3 &= \{ (x_1, x_2, x_3) \mapsto (x_1 + P(x_2, x_3), x_2, x_3)\ | \ P \in
\K[x_2,x_3] \}.
\end{align*}
There is a natural homomorphism $\Jac\colon \TA \to \K^*$ given by the
Jacobian determinant. The kernel $\STA$ of
this homomorphism is the \emph{special tame automorphism group}.
Analogously one defines $\Tame(\A^n)$ and $\STame(\A^n)$ for arbitrary $n\geq 2$.
It is a natural question whether $\STame(\A^n)$ is a simple group.
In this paper we prove that $\STA$ is not simple (and indeed very far from being simple).

Our strategy is to use an action of $\TA$ on a Gromov-hyperbolic triangle
complex, and to exhibit a loxodromic weakly proper discontinuous element of $\STA$, in the sense of M. Bestvina and K. Fujiwara \cite{BF}.
Recall that an isometry $f$ of a metric space $X$ is \emph{loxodromic} if for some (hence any) $x\in X$
there exists $\lambda > 0$ such that for any $k\in \Z$ we have $|x,f^k\cdot x|\geq \lambda |k|$. Suppose that $f$ belongs to a group
$G$ acting on $X$ by isometries. We say that $f$ is
\emph{weakly proper discontinuous} (WPD) if for some (hence any) $x\in X$ and any $C\geq 1$, for $k$ sufficiently large
there are only finitely many $j\in G$ satisfying $|x, j\cdot x| \leq C$ and $|f^k\cdot x, j\circ f^k\cdot x|\leq C$.

By the work of F. Dahmani, V. Guirardel,
and D. Osin \cite[Thm 8.7]{DGO}, the existence of an action of a non-virtually cyclic group $G$ on a
Gromov-hyperbolic metric space, with at least one loxodromic WPD element, implies that $G$
has a free normal subgroup, and in particular $G$ is not simple.
By the work of D. Osin \cite[Thm 1.2]{Osin}, such a group is \emph{acylindrically hyperbolic}:
there exists a (different) Gromov-hyperbolic space on which the action of $G$ is acylindrical, a notion introduced for general metric spaces by B. Bowditch \cite{B}.

This strategy was recently applied to various transformation groups in algebraic geometry.
We now review a few examples to explain how the group $\TA$ fits in the global picture.

First we discuss the group $\Bir(\p^2_\K)$, the \emph{Cremona group} of rank 2, which is the group of birational transformations of the projective plane.
It is by no means obvious to find a Gromov-hyperbolic space on which the Cremona groups acts.
One takes all projective surfaces dominating $\p^2_\K$ by a sequence of blow-ups, and considers the direct limit of their spaces of curves, called N\'eron--Severi groups. The limit is endowed with a lorentzian intersection form defining an infinite dimensional hyperboloid.
This hyperboloid was introduced in \cite{Cantat} and used to prove for instance a Tits alternative for the Cremona group.
Then it was used in \cite{CL} to prove the non-simplicity of $\Bir(\p^2_\K)$ over an algebraically closed field $\K$. Finally, the above mentioned strategy was applied in \cite{Lonjou} to obtain the non-simplicity over an arbitrary base field.

Note that one of the initial motivations for \cite{DGO} was the application to the mapping class group.
As it is the case for the Cremona group, in studying the mapping class group one uses an action on a non-locally compact Gromov-hyperbolic space (the complex of curves), but the parallel goes beyond that. For instance, there are striking similarities between the notion of dilatation factor for a pseudo-Anosov map, and the dynamical degree of a generic Cremona map: see the survey \cite{Cantat-survey} for more details.

The above results about $\Bir(\p^2_\K)$ were inspired by previous work on its subgroup $\Aut(\K^2)=\Tame(\A^2)$. It is classical that $\Aut(\K^2)$ is the amalgamated product of two of its subgroups, and so we get an action of $\Aut(\K^2)$ on the associated Bass--Serre tree (which is obviously Gromov-hyperbolic).
Together with some classical small cancellation theory this was used by V. Danilov \cite{Danilov} to produce many normal subgroups in $\Aut(\C^2)$ (and in $\SAut(\C^2)$), see also \cite{FL}.
Recently these results were extended to the case of an arbitrary field by A. Minasyan and D. Osin \cite[Cor~ 2.7]{MO}, again by producing concrete examples of WPD elements.

When one tries to extend these results to higher dimensions one has to face the formidable gap in complexity between birational geometry of surfaces and in higher dimension.
We refer to the introduction of \cite{BFL} for a few more comments on this side of the story.
The group $\TA$ seems to be a good first step to enter the world of dimension 3.
It was a classical question proposed by M. Nagata in the 70' whether the inclusion $\TA \subset \Aut(\K^3)$ was strict. This was confirmed 30 years later by I.P. Shestakov and U.U. Umirbaev \cite{SU}, with an argument which was somewhat simplified by S. Kuroda \cite{Kuroda}. Then it was recently noticed \cite{BFL, wright, Lamy} that we can rephrase the theory developped by these authors by saying that $\TA$ is the amalgamated product of three subgroups along their pairwise intersections. Equivalently, the group $\TA$ acts on a simply connected 2-dimensional simplicial complex $\Comp$, with fundamental domain a single triangle.
This complex $\Comp$ is the main object of our present work
(see Section~\ref{sec:complex} for the definition).

To end the historical background, note that the situation on the affine 3-dimensional quadric has been also successfully explored.
(In fact, we considered it a test setting for the whole strategy, before trying to handle the affine space $\K^3$.) The notion of a tame automorphism in this context was introduced by S. V\'en\'ereau and the first author. An action on a Gromov-hyperbolic $\CAT(0)$ square complex was constructed in \cite{BFL}. WPD elements were recently produced by A. Martin \cite{Martin}.

\medskip

Our first two results about the geometry of the complex $\Comp$ associated with $\TA$ are the following.

\begin{maintheorem}
\label{thm:contractible}
$\Comp$ is contractible.
\end{maintheorem}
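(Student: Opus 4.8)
The plan is to reduce the statement to the vanishing of $H_2(\Comp;\Z)$ and then to prove that by a combinatorial Morse argument, filtering $\Comp$ by a complexity coming from polynomial degrees. Since $\Comp$ is a $2$-dimensional simplicial (hence CW) complex and is simply connected — recall it arises as the development of the ``triangle of groups'' expressing $\TA$ as the amalgam of three subgroups along their pairwise intersections — the Hurewicz theorem gives $\pi_2(\Comp)\cong H_2(\Comp;\Z)$, while $H_i(\Comp)=0$ for $i\geq 3$ for dimension reasons; so by the Hurewicz and Whitehead theorems contractibility of $\Comp$ is equivalent to $H_2(\Comp;\Z)=0$, which is what I would prove.

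Fix the base vertex $v_0$ of type $3$ corresponding to the coset $A_3\subset\TA$ (the class of the identity automorphism). A vertex of $\Comp$ of type $1$, $2$, or $3$ is represented, respectively, by one coordinate polynomial, an unordered pair of them, or a tame automorphism, and I would assign to it a complexity (a nonnegative integer) equal to the total degree of a representative of minimal degree, suitably normalized so that complexity $0$ is attained exactly on the closed star $\overline{\mathrm{st}}(v_0)$. Let $\Comp_n$ be the full subcomplex of $\Comp$ spanned by the vertices of complexity $\leq n$. Then $\Comp_0=\overline{\mathrm{st}}(v_0)$ is a cone with apex $v_0$, hence contractible, and $\Comp=\bigcup_n\Comp_n$; since homology commutes with direct limits, it suffices to prove $H_2(\Comp_n;\Z)=0$ for every $n$, which I would do by induction on $n$.

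For the inductive step, one passes from $\Comp_n$ to $\Comp_{n+1}$ by attaching, for each new vertex $v$ of complexity $n+1$, the closed star of $v$ in $\Comp_{n+1}$ along its \emph{descending link} $\mathrm{lk}^{\downarrow}(v)$, the full subcomplex of $\mathrm{lk}(v)$ spanned by the vertices of complexity $\leq n$. (For this one first checks that no two vertices of complexity $n+1$ are joined by an edge, refining the complexity by a lexicographic secondary order to break ties if necessary, so that every new simplex has a well-defined apex and the new stars meet $\Comp_n$ exactly in their descending links.) The closed star of $v$ is a cone on $\mathrm{lk}^{\downarrow}(v)$, so excision and the long exact sequence of the pair $(\Comp_{n+1},\Comp_n)$ yield, using $H_2(\Comp_n)=0$ from the induction, an injection $H_2(\Comp_{n+1})\hookrightarrow H_2(\Comp_{n+1},\Comp_n)\cong\bigoplus_v\widetilde H_1\big(\mathrm{lk}^{\downarrow}(v)\big)$. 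It therefore suffices to show that each descending link is acyclic; I would in fact show it is contractible, which also makes the mechanism transparent.

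The whole content of the argument is concentrated in this last point, and this is where I expect the main obstacle to lie: identifying the descending link of a non-reduced vertex. A type-$3$ vertex of complexity $n+1$ is a tame automorphism admitting an elementary reduction in the sense of Shestakov--Umirbaev, in the classification of Kuroda; the vertices of its descending link should correspond to such reductions and the edges to their mutual compatibility, and the reduction theory of \cite{SU,Kuroda}, as reformulated in \cite{BFL,Lamy}, is exactly what should force this graph to be connected and a tree. Extracting from the Shestakov--Umirbaev inequalities enough control over how two (and three) competing reductions of a given automorphism interact — and carrying out the easier analogue for type-$1$ and type-$2$ vertices — is the heart of the matter. (A purely local, nonpositive-curvature type of argument, which would give contractibility for free, does not seem available here, so the global Morse argument is genuinely needed; this is unlike the situation for the affine quadric, where one has a $\CAT(0)$ complex.)
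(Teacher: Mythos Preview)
Your reduction to $H_2(\Comp;\Z)=0$ via simple connectivity, Hurewicz and Whitehead is exactly how the paper begins, but from there the routes diverge. The paper does not filter by degree or analyse descending links. Instead it sets up a combinatorial curvature theory on reduced disc and sphere diagrams: each triangle gets angles $\frac{\pi}{2},\frac{\pi}{3},\frac{\pi}{6}$, and a system of oriented ``arrows'' on type-$(1,2)$ edges transports excess curvature between neighbouring vertices. The main work (Proposition~\ref{prop:nonpositivecurvature}) is that every interior vertex of a reduced diagram has curvature $\leq 0$; a reduced sphere diagram would then have total curvature $\leq 0$, contradicting combinatorial Gauss--Bonnet (Proposition~\ref{prop:GB}), so $\pi_2(\Comp)=0$. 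Your closing parenthetical is therefore the one point where your instinct misleads you: a nonpositive-curvature argument \emph{is} available, just not a $\CAT(0)$ one --- and the same machinery is then reused for hyperbolicity and the WPD property.

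Your Morse plan is a reasonable alternative in outline, and its connectedness half is essentially how \cite{Lamy} obtains $\pi_1(\Comp)=0$. But to kill $H_2$ you need each descending link (a subgraph of a link) to be a \emph{tree}, and this is precisely the step you leave open, pointing at Shestakov--Umirbaev--Kuroda theory and saying it ``should'' force it. That is a genuine gap, not a formality: for a type~$3$ vertex the full link is the incidence graph of $\mathbb P^2_\K$, which is full of $6$-cycles, and showing that the descending part avoids all of them would require detailed control over which pairs of elementary reductions of a given automorphism are simultaneously degree-lowering and compatible. The paper's arrow calculus (the trees $\T(v_1)$ and Lemmas~\ref{lem:type2weird}--\ref{lem:impossible}) is doing comparable algebraic work but packages it in a form that also feeds directly into Theorems~\ref{thm:hyperbolic}--\ref{thm:wpd}.
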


\begin{maintheorem}
\label{thm:hyperbolic}
$\Comp$ is Gromov-hyperbolic.
\end{maintheorem}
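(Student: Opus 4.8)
The plan is to realize $\Comp$ as a piecewise-Euclidean complex in which every triangle is one fixed Euclidean triangle --- this is harmless because $\TA$ acts transitively on the triangles of $\Comp$, the fundamental domain being a single triangle --- and then to deduce Gromov-hyperbolicity from a combinatorial form of negative curvature. Since $\Comp$ is simply connected (this is exactly the amalgam decomposition of $\TA$ over its pairwise intersections recalled in Section~\ref{sec:complex}, and in any case follows from Theorem~\ref{thm:contractible}), it suffices to establish a \emph{linear isoperimetric inequality}: every loop bounds a van Kampen disc diagram $D \to \Comp$ with $\mathrm{Area}(D) \le K\,|\partial D|$ for a uniform constant $K$, and a simply connected $2$-complex with this property is Gromov-hyperbolic. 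I would prove it by a combinatorial Gauss--Bonnet argument on reduced (area-minimizing) diagrams.

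The steps would be, roughly the following. (1) Describe the links of the three types of vertices of $\Comp$ as bipartite graphs --- concretely the coset complexes of each amalgam factor $G_i$ relative to the $G_i\cap G_j$ and to $G_1\cap G_2\cap G_3$ --- and read off a lower bound on their girth, together with the crucial information about which triangles of $\Comp$ can share a given edge or vertex; this is where the Shestakov--Umirbaev / Kuroda theory of elementary reductions enters, and should be available from the structural results preceding this statement. (2) Give each corner of a triangle angle $\pi/3$ (so every face is flat) and equip each vertex of a diagram $D$ with its standard combinatorial curvature, that of an interior vertex being $2\pi$ minus the sum of the angles around it, so that the combinatorial Gauss--Bonnet formula gives total curvature $2\pi\chi(D)=2\pi$. (3) Using step~(1), show that every interior vertex of a reduced diagram is surrounded by at least six faces (girth $\ge 6$ of the link, no spurs, no two faces sharing two edges), hence has non-positive curvature there, and that the remaining "flat hexagonal" interior vertices, for which the curvature vanishes, occur only in configurations that a reduced diagram cannot contain --- this last exclusion being the algebraic input again. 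Non-positivity of all interior curvature then forces a definite total positive curvature onto $\partial D$, which is the desired linear bound; small diagrams (bigons, triangles, low-valence boundary configurations) are dealt with by a finite case analysis.

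The main obstacle is step~(1): extracting from the delicate theory of tame automorphisms of $\A^3$ a combinatorial statement precise enough to control the links and the adjacency of triangles --- in effect, a dictionary translating "a small disc diagram is not reduced" into "there is an elementary reduction", or into a Kuroda-type inequality. Once that dictionary is in place the Gauss--Bonnet bookkeeping is routine though fiddly, the one genuine subtlety being the treatment of the flat interior vertices, which must be forbidden in reduced diagrams rather than merely estimated. An alternative route that avoids choosing a metric altogether is to verify Bowditch's "guessing geodesics" criterion for the $1$-skeleton of $\Comp$: one exhibits, for each pair of vertices, an explicit family of uniform quasigeodesic paths arising from normal forms of tame automorphisms and checks the associated thin-triangle inequality. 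This reuses the explicit description of paths in $\Comp$ but ultimately requires the same algebraic input.
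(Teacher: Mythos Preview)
Your plan has a genuine gap at the level of the link analysis and, more seriously, at the isoperimetric step. First, the link of a type~$2$ vertex in $\Comp$ is a \emph{complete} bipartite graph, so its girth is~$4$, not~$6$; reduced disc diagrams do contain interior type~$2$ vertices of valence~$4$ (see Figure~\ref{fig:8cases}(b,c)). With your equilateral angles $\pi/3$ such a vertex has strictly positive curvature, and Gauss--Bonnet gives nothing. The paper deals with this by assigning angles $\pi/2,\pi/3,\pi/6$ at types $2,3,1$ respectively, and then introducing an additional \emph{arrow} correction $\frac{\out-\inn}{6}$ on type~$(1,2)$ edges (Section~\ref{sec:curvature}); only with this transport-of-curvature device does every interior vertex become nonpositively curved (Proposition~\ref{prop:nonpositivecurvature}), and even then zero curvature does occur in eight configurations that are \emph{not} forbidden in reduced diagrams.

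Second, and decisively, your proposed conclusion --- a linear bound on the combinatorial area of reduced disc diagrams --- is simply false for~$\Comp$: the Remark at the end of Section~\ref{sec:loxodromic} exhibits embedded loops $\alpha_n$ of fixed length~$8$ whose minimal filling area tends to infinity. Thus no amount of sharpening the curvature estimates will yield a linear isoperimetric inequality in your sense. The paper circumvents this via the weaker criterion of Proposition~\ref{prop:hypcriterion}: one only needs a set $N$ of \emph{linearly many} negatively curved vertices such that every vertex of the diagram lies within uniformly bounded distance of $\partial\D\cup N$. The substantive work (Section~\ref{sec:hyperbolicity}) is the case analysis showing that the eight zero-curvature configurations cannot tile a large region, so every flat vertex is within distance~$10$ of $\partial\D\cup N$. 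Your step~(3), which hoped to exclude flat vertices outright, is therefore where the argument must be replaced rather than merely refined.
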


As is often the case when dealing with 2-dimensional complexes, our arguments rely on understanding disc diagrams, i.e.\ simplicial discs mapping to the complex.
The main difficulty here is that, by contrast with the above mentioned settings, the complex $\Comp$ does not admit an equivariant $\CAT(0)$ metric.
We circumvent this problem by a procedure of ``transport of curvature''.
Precisely, given a disc diagram, first we assign to each triangle of the diagram angles $\pi/2, \pi/3, \pi/6$, and then, by putting an orientation on certain edges of the diagram, we describe how to transport any excess of positive curvature at a given vertex to neighbouring vertices.
In this sense we obtain that any disc diagram is negatively curved, which gives Theorem~\ref{thm:contractible}, and also, via linear isoperimetric inequality, Theorem~\ref{thm:hyperbolic}.

\medskip

We now turn to the existence of WPD elements in $\TA$.
On the $1$-skeleton $\Comp^1$ of $\Comp$ we use the path metric where each edge
has length $1$, which is
quasi-isometric to any $\TA$-equivariant path-metric on $\Comp$.
Let $n \ge 0$, and let $g,h,f\in\TA$ be the automorphisms defined by
\begin{align*}
g^{-1}(x_1,x_2,x_3) & = (x_2, x_1+x_2x_3, x_3), \\
h^{-1}(x_1,x_2,x_3) &= (x_3,x_1,x_2),\\
f &= g^n \circ h.
\end{align*}

Observe that  $\Jac(h) =
1$ and $\Jac(g) = -1$, so for even $n$ the automorphism $f$ is an
element
of $\STA$.

\begin{maintheorem}
\label{thm:loxo}
Let $n \ge 3$. Then $f$ acts as a loxodromic element on $\Comp^1$.
In particular, the complex $\Comp$ has infinite diameter.
\end{maintheorem}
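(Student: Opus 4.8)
The plan is to exhibit an explicit $f$-invariant bi-infinite path in $\Comp^1$ and to show it is a quasigeodesic; by Theorem~\ref{thm:hyperbolic} this suffices, and since $f\in\TA$ acts by isometries, the resulting unboundedness of the orbit $\{f^k\cdot v\}$ then also gives that $\Comp$ has infinite diameter. Fix a convenient base vertex $v$ — say the type-$\typethree$ vertex of the fundamental triangle, attached to the identity automorphism — choose a geodesic $\alpha$ in $\Comp^1$ from $v$ to $f\cdot v$, put $\ell=|v,f\cdot v|$, and set $\gamma=\bigcup_{k\in\Z}f^k\cdot\alpha$. Then $f$ translates $\gamma$ by $\ell$, so it is enough to prove that $\gamma$ is a quasigeodesic; equivalently, since $\Comp^1$ is Gromov-hyperbolic, that $\gamma$ is an $L$-local geodesic for $L$ above a threshold depending only on the hyperbolicity constant.

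By $f$-equivariance and periodicity this reduces to finitely many distance computations in $\Comp$: one must show that the subpath of $\gamma$ between $f^{-a}\cdot v$ and $f^{b}\cdot v$ is geodesic whenever $(a+b)\ell\le L$ (and likewise for subpaths starting or ending inside $\alpha$). I would carry these out from the combinatorial description of $\Comp$ together with the degree function on tame automorphisms coming from the work of Shestakov--Umirbaev and Kuroda \cite{SU,Kuroda}. The automorphism $g^{-1}=(x_2,\,x_1+x_2x_3,\,x_3)$ is a transposition followed by a single elementary move of degree $2$; the role of the hypothesis $n\ge3$ is that iterating $g^{n}$ raises the relevant degrees quickly enough that $f\cdot v$ and its images are far in a controlled direction, so that $\alpha$ and its translates follow a prescribed pattern of vertex types $\typeone$, $\typetwo$, $\typethree$ and no shortcut across several consecutive copies of $\alpha$ is available. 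For small $n$ these configurations degenerate and $\gamma$ need not be a local geodesic, which is why the statement is restricted to $n\ge3$.

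Establishing the local geodesicity — ruling out a shortcut $\sigma$ between two points of $\gamma$ at bounded distance along $\gamma$ — is where the geometry of $\Comp$ is used. If such a $\sigma$ existed then, $\Comp$ being simply connected, the loop formed by $\sigma$ and the corresponding arc of $\gamma$ bounds a disc diagram $D\to\Comp$, which we take of minimal area; applying the ``transport of curvature'' analysis from the proofs of Theorems~\ref{thm:contractible} and~\ref{thm:hyperbolic} to $D$ would force the $\gamma$-arc of $\partial D$ to carry a definite amount of positive curvature, contradicting the fact (read off from the links of the vertices along $\alpha$ and the action of $g$ and $h$) that $\gamma$ makes genuine progress at every vertex $f^k\cdot v$. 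The same mechanism in fact produces a linear lower bound $|v,f^k\cdot v|\ge\lambda k$ directly. I expect the main obstacle to be exactly this local analysis carried out without an equivariant $\CAT(0)$ metric on $\Comp$ (so that ``local geodesic $\Rightarrow$ global geodesic'' is not available for free): the degree bookkeeping needed to pin down $\alpha$, the links of its endpoints, and the sharp threshold $n\ge3$ is the delicate combinatorial input, while the passage from the local picture to a global quasigeodesic, and hence to loxodromicity, is routine hyperbolic geometry.
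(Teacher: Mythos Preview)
Your high-level strategy is on the right track, and the throwaway remark near the end --- that the disc-diagram mechanism ``in fact produces a linear lower bound $|v,f^k\cdot v|\ge\lambda k$ directly'' --- is exactly how the paper proceeds. The paper does \emph{not} pass through Theorem~\ref{thm:hyperbolic} or local geodesics at all: it fixes an explicit axis $\gamma$ through type~$1$ and type~$2$ vertices (namely $\gamma(0)=[x_1]$, $\gamma(1)=[x_1,x_3]$, $\gamma(2)=[x_3]$, extended $f$-equivariantly), assumes a minimal violation of $|\gamma(s),\gamma(t)|>\frac{1}{6}|t-s|$, spans a reduced disc $\D$ on the resulting embedded loop, and derives a contradiction from Gauss--Bonnet (Proposition~\ref{prop:GB}) together with interior nonpositive curvature (Proposition~\ref{prop:nonpositivecurvature}). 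Your detour through $L$-local geodesics would require knowing the hyperbolicity constant, which the proof of Theorem~\ref{thm:hyperbolic} does not make usefully explicit, and it is unnecessary.

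The genuine gap is in how you propose to bound the boundary curvature along the $\gamma$-arc of $\partial\D$. You attribute this to ``degree bookkeeping'' \`a la Shestakov--Umirbaev--Kuroda and to $g^n$ ``raising the relevant degrees quickly enough''. That is not the mechanism. The key input (Proposition~\ref{pro:boundarycurvature}) is that at each type~$1$ boundary vertex $\gamma(2k)$ one has $K_\partial(\gamma(2k))\le\frac{2-n}{6}$, and the integer $n$ enters not through polynomial degree but through the tree $\T(v_1)$ of Proposition~\ref{prop:maintree}: the path in $\LL([x_3])$ from $\gamma(1)$ to $\gamma(3)$ projects under $\sigma$ to a geodesic of length $2n+2$ in $\T([x_3])$, so any competing path $\hat\delta$ in the link must cover that geodesic and hence pass through $n$ intermediate stages, each of which (via Lemma~\ref{lem:inarrow} or Lemma~\ref{lem:no outgoing}) forces an incoming terminal arrow or a nonoriented edge at $\gamma(2)$. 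This is why $n\ge 3$ is exactly the threshold giving $K_\partial\le -\tfrac{1}{6}$. Without isolating this tree structure in the link you have no way to convert the presence of $g^n$ into negative boundary curvature; your choice of a type~$3$ base vertex further obscures this, since the decisive contribution sits at the type~$1$ vertices of the axis.
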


\begin{maintheorem}
\label{thm:wpd}
Let $n\geq12$ be even and $G=\TA$. Then $f$ acts as a $\mathrm{WPD}$ element on~$\Comp^1$.
\end{maintheorem}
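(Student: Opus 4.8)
The plan is to build on Theorem~\ref{thm:loxo}. Since $f$ is loxodromic on $\Comp^1$, the analysis behind that theorem provides an $f$-invariant bi-infinite geodesic $L\subset\Comp^1$ with consecutive vertices $\dots,v_{-1},v_0,v_1,\dots$ and $f\cdot v_i=v_{i+\ell}$, where $\ell\ge1$ is the translation length, so that $f^k\cdot v_0=v_{k\ell}$. Each element of $\TA$ induces a \emph{type-preserving} isometry of $\Comp^1$, since $\TA$ preserves the partition of the vertices into the three types $\typeone,\typetwo,\typethree$. A direct computation with $f=g^n\circ h$ will show moreover that the periodic type-sequence along $L$ is \emph{rigid}: for $d$ large, any two subpaths of $L$ carrying the same length-$d$ type pattern are $\langle f\rangle$-translates of one another (that $n$ is even, hence $f\in\STA$ with $\Jac(f)=1$, is convenient here for excluding type-reversing coincidences, and is needed for the application to $\STA$ anyway).

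Fix $C\ge1$. By the definition of WPD recalled in the introduction it suffices to show that for $k$ large the set
$$J_k=\bigl\{\, j\in\TA \ :\ |v_0,\,j\cdot v_0|\le C,\ \ |v_{k\ell},\,j\cdot v_{k\ell}|\le C\,\bigr\}$$
is finite. Let $j\in J_k$. The segments $L|_{[v_0,v_{k\ell}]}$ and $(j\cdot L)|_{[j\cdot v_0,\,j\cdot v_{k\ell}]}$ are geodesics whose respective endpoints lie at distance $\le C$, so by Theorem~\ref{thm:hyperbolic} and the Morse property of geodesics in a Gromov-hyperbolic space there are constants $R_0(C)$ and $A(C)$, \emph{independent of $k$ and $j$}, such that $j$ maps the subpath $L|_{[v_{A(C)},\,v_{k\ell-A(C)}]}$ into the $R_0(C)$-neighbourhood of $L$.

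The crux is to promote this coarse statement to a genuinely combinatorial one: there are $D_0$ and a function $D_1$ so that any geodesic segment $\sigma$ of $\Comp^1$ of length $\ell(\sigma)\ge D_0$ contained in the $R_0(C)$-neighbourhood of $L$ shares with $L$ a common subpath of length $\ge D_1(\ell(\sigma))$, with $D_1\to\infty$. This is the main obstacle, because $\Comp$ is \emph{not} locally finite --- the $R_0$-neighbourhood of a bounded segment of $L$ contains infinitely many vertices --- so one cannot argue by counting neighbours. Instead the proof returns to disc diagrams: one spans a disc diagram over the region bounded by $\sigma$, a matching portion of $L$, and the two short geodesics realizing the neighbourhood bound, and then applies the transport-of-curvature estimate underlying Theorems~\ref{thm:contractible} and~\ref{thm:hyperbolic} to conclude that such a diagram must degenerate onto a path, whence $\sigma$ and $L$ coincide along a long subpath. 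This is also where $n\ge12$ enters: it makes the period $\ell$ large enough that the curvature bound is violated unless the diagram degenerates.

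Combining these steps, for $k$ large every $j\in J_k$ maps a long subpath $\tau$ of $L$ --- which we may take to contain $L|_{[v_{A'(C)},\,v_{k\ell-A'(C)}]}$ for a constant $A'(C)$, hence common to all $j\in J_k$ --- isometrically onto a subpath of $L$. Since $j$ is type-preserving and the type-sequence along $L$ is rigid, $j$ must agree on $\tau$ with a power $f^{m(j)}$; the loxodromicity constant from Theorem~\ref{thm:loxo} together with $|v_0,\,j\cdot v_0|\le C$ then bounds $|m(j)|\le M(C)$. Hence $f^{-m(j)}\circ j$ fixes $\tau$ pointwise. Finally, an algebraic lemma read off from the description of $\Comp$ in Section~\ref{sec:complex} --- using that the vertices of $L$ encode coordinates, pairs of coordinates, and coordinate systems up to affine equivalence, and that a tame automorphism is determined by its effect on one coordinate system --- shows that the pointwise stabilizer in $\TA$ of a sufficiently long subpath of $L$ is finite (indeed trivial once $\tau$ contains enough type-$\typethree$ vertices in general position). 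For $k$ large $\tau$ has this property, so $J_k$ is contained in the union of $f^m\circ\Stab_{\TA}(\tau)$ over $|m|\le M(C)$, a finite set. Therefore $f$ acts as a WPD element on $\Comp^1$.
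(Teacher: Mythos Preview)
Your outline follows the paper's architecture---axis, fellow-travelling, promote to exact overlap, correct by a power of $f$, finite pointwise stabiliser---but several steps as stated have genuine gaps. First, Theorem~\ref{thm:loxo} only gives $|\gamma(s),\gamma(t)|>\tfrac16|s-t|$, not a geodesic axis; in a non-proper hyperbolic space a loxodromic isometry need not have one. The paper proves separately (Corollary~\ref{cor:geodesic}, requiring $n\ge12$) that the explicit path $\gamma$ is a geodesic, via the key estimate Proposition~\ref{pro:boundarycurvature}: in any reduced disc diagram containing $\gamma[1,3]$ in its boundary, the boundary curvature at the type~$1$ vertex $\gamma(2)$ is $\le\frac{2-n}{6}$. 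This estimate is specific to~$\gamma$ and is \emph{not} ``underlying Theorems~\ref{thm:contractible} and~\ref{thm:hyperbolic}''; it is exactly where $n\ge12$ enters, and it is also what forces your ``crux'' diagram to degenerate. The paper packages this as Lemma~\ref{lem:gammaattracting} and Corollary~\ref{cor:uniquegeodesic}, which give directly that $j\cdot\gamma[0,2k]$ and $\gamma[0,2k]$ share a long common subsegment, so the detour through the Morse lemma is unnecessary.

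Second, your type-rigidity step does not work as written. Along $\gamma$ the types simply alternate $1,2,1,2,\ldots$ with period~$2$ (equal to the translation length), so the type pattern is palindromic and carries no information that could exclude $j$ sending a subpath of the axis onto the axis with \emph{reversed} orientation; invoking that $n$ is even and $\Jac(f)=1$ is irrelevant here. The paper rules out the reversed case by a length count: if orientation were reversed, Lemma~\ref{lem:gammaattracting} applied to each protruding end of $j\cdot\gamma[0,2k]$ would force $2k<24C$. Finally, the finiteness of the pointwise stabiliser of a long subpath of $\gamma$ (Lemma~\ref{lem:axisgivesinfo}) is a concrete computation with the polynomials arising from iterating $g$ (Lemma~\ref{lem:iterates of g}), and cannot simply be ``read off'' from Section~\ref{sec:complex}; note in particular that the stabiliser of a single type~$3$ vertex is the full affine group $A_3$, so containing many type~$3$ vertices is by itself not enough.
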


Again the idea of proof relies on the notion of combinatorial curvature.
There is a simplicial embedding $\gamma\colon \R \to \Comp$ such that $f$ acts by translation on the image of $\gamma$.
Moreover, we prove that any disc diagram containing a segment of $\gamma$ in its boundary has very negative curvature along this segment.
This is the key property that allows to obtain both Theorems \ref{thm:loxo} and \ref{thm:wpd}.

As mentioned above, by \cite[Thm 8.7]{DGO}, Theorems~\ref{thm:loxo}
and~\ref{thm:wpd} immediately imply the following:

\begin{maincorollary}
The group $\STA$ is not simple.
\end{maincorollary}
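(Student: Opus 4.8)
The plan is to derive the Corollary as an immediate application of \cite[Thm~8.7]{DGO}, exactly as announced above; all that has to be done is to check that its hypotheses hold for the group $G=\STA$ acting on $\Comp^1$ through the element $f=g^n\circ h$. Fix once and for all an even integer $n\geq 12$, so that also $n\geq 3$ and hence both Theorem~\ref{thm:loxo} and Theorem~\ref{thm:wpd} apply. First, $f$ lies in $\STA$: since $\Jac\colon\TA\to\K^*$ is a homomorphism and $\Jac(g)=-1$, $\Jac(h)=1$, we get $\Jac(f)=\Jac(g)^n\Jac(h)=(-1)^n=1$.

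Next I would set up the ambient space. The $1$-skeleton $\Comp^1$, equipped with the path metric in which every edge has length $1$, is a geodesic metric space on which $\TA$, and hence its subgroup $\STA$, acts by simplicial isometries. Moreover $\Comp^1$ is Gromov-hyperbolic: by Theorem~\ref{thm:hyperbolic} the complex $\Comp$ is Gromov-hyperbolic, $\Comp^1$ is quasi-isometric to any $\TA$-equivariant path metric on $\Comp$, and Gromov-hyperbolicity is a quasi-isometry invariant among geodesic spaces.

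Now I would check the remaining three hypotheses. The group $\STA$ is not virtually cyclic, because it contains $E_3$, hence a copy of $\mathbb{Z}^2$ (for instance the subgroup generated by the elementary automorphisms $(x_1,x_2,x_3)\mapsto(x_1+x_2,x_2,x_3)$ and $(x_1,x_2,x_3)\mapsto(x_1+x_3,x_2,x_3)$), whereas no subgroup of a virtually cyclic group is isomorphic to $\mathbb{Z}^2$. By Theorem~\ref{thm:loxo} the element $f$ acts loxodromically on $\Comp^1$. By Theorem~\ref{thm:wpd} it acts as a WPD element for the action of $\TA$ on $\Comp^1$; since $\STA\leq\TA$, it is \emph{a fortiori} a WPD element for the action of $\STA$, because the finiteness condition in the definition of WPD is imposed on a set of group elements $j$ which only gets smaller when one restricts from $\TA$ to the subgroup $\STA$.

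It then remains to invoke \cite[Thm~8.7]{DGO}: the non-virtually-cyclic group $\STA$ acts on the Gromov-hyperbolic space $\Comp^1$ and $f$ is a loxodromic WPD element for this action, so $\STA$ contains a nontrivial free normal subgroup. This subgroup is proper --- indeed $\STA$ is not free, since it contains the non-free group $\mathbb{Z}^2$ --- so $\STA$ is not simple. I do not expect any genuine difficulty at this stage, as all the geometric substance has been absorbed into the earlier theorems; the only points deserving a moment's care are the membership $f\in\STA$, the descent of the WPD property from $\TA$ to $\STA$, and the observation that $\STA$ is neither virtually cyclic nor free.
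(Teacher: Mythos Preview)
Your proposal is correct and follows exactly the route the paper takes: the paper simply states that the corollary is an immediate consequence of Theorems~\ref{thm:loxo} and~\ref{thm:wpd} via \cite[Thm~8.7]{DGO}, and you have merely spelled out the hypotheses (hyperbolicity of $\Comp^1$, $f\in\STA$, descent of WPD to the subgroup, non-virtual-cyclicity and non-freeness of $\STA$) that the paper leaves implicit.
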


Furthermore, by \cite[Thm 1.2]{Osin}, we have:

\begin{maincorollary}
The groups $\TA$ and $\STA$ are acylindrically hyperbolic.
\end{maincorollary}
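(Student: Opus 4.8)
The plan is to deduce this corollary directly from Theorems~\ref{thm:hyperbolic}, \ref{thm:loxo} and~\ref{thm:wpd}, together with the cited results of Dahmani--Guirardel--Osin and Osin; essentially all the work has already been done, and what remains is bookkeeping. First I would recall that by Theorem~\ref{thm:hyperbolic}, and the fact that $\Comp^1$ with its path metric is quasi-isometric to any $\TA$-equivariant path metric on $\Comp$, the graph $\Comp^1$ is a Gromov-hyperbolic space on which $\TA$ acts by isometries. Fix an even integer $n\geq 12$ and the corresponding element $f=g^n\circ h$. By Theorem~\ref{thm:loxo} the element $f$ is loxodromic for this action, and by Theorem~\ref{thm:wpd} it is WPD with respect to $G=\TA$.

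Next I would check that $\TA$ is not virtually cyclic: the subgroup $E_3$ is isomorphic to the additive group $\K[x_2,x_3]$, which is neither finite nor virtually infinite cyclic. Hence $\TA$ is a non-virtually-cyclic group acting on a Gromov-hyperbolic space with a loxodromic WPD element, so \cite[Thm~1.2]{Osin} applies and yields that $\TA$ is acylindrically hyperbolic. For $\STA$ I would use the observation already made in the text that $\Jac(f)=(-1)^n=1$ since $n$ is even, so $f\in\STA$. The action of $\TA$ on $\Comp^1$ restricts to an action of the subgroup $\STA$, for which $f$ is still loxodromic (loxodromicity is a property of the isometry and the space, not of the ambient group). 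Moreover $f$ remains WPD for $\STA$: for any $C\geq 1$ and any sufficiently large $k$, the set of $j\in\STA$ with $|x,j\cdot x|\leq C$ and $|f^k\cdot x, j\circ f^k\cdot x|\leq C$ is contained in the corresponding finite set of such $j\in\TA$. Finally $\STA$ is not virtually cyclic, since it contains $\{(x_1+P(x_2,x_3),x_2,x_3)\ |\ P\in\K[x_2,x_3]\}\cong(\K[x_2,x_3],+)$, which has unbounded torsion-free rank. Applying \cite[Thm~1.2]{Osin} once more shows that $\STA$ is acylindrically hyperbolic.

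I do not expect any serious obstacle in this argument: the genuinely hard inputs are Theorems~\ref{thm:hyperbolic}, \ref{thm:loxo} and~\ref{thm:wpd}, and the points used here---that $f$ can be taken inside $\STA$ when $n$ is even, that the WPD property passes to a subgroup containing the element, and that neither $\TA$ nor $\STA$ is virtually cyclic---are all elementary. If anything requires a word of care, it is the remark that WPD descends to subgroups; but this is immediate from the definition, since imposing $j\in\STA\subseteq\TA$ only shrinks the relevant set of group elements. Alternatively, one could invoke the structure theory of normal subgroups of acylindrically hyperbolic groups to pass from $\TA$ to $\STA$, but the direct verification above is shorter and self-contained.
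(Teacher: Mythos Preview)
Your proposal is correct and follows the same approach as the paper, which simply cites \cite[Thm~1.2]{Osin} without further detail. You have merely made explicit the routine verifications (non-virtual-cyclicity of $\TA$ and $\STA$, the fact that $f\in\STA$ for even $n$, and that the WPD condition passes to the subgroup $\STA$) that the paper leaves to the reader.
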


\medskip
\noindent \textbf{Organisation.} We recall the definition of $\Comp$ in Section~\ref{sec:complex}. In Section~\ref{sec:trees} we study its local geometry.
This allows us in Section~\ref{sec:curvature} to introduce the \emph{curvature} at vertices of reduced disc diagrams in $\Comp$. In Section~\ref{sec:nonpositive} we prove that curvature is nonpositive and we deduce Theorem~\ref{thm:contractible}.
A more detailed study allows to prove Theorem~\ref{thm:hyperbolic} in Section~\ref{sec:hyperbolicity}.
Finally, in Section~\ref{sec:loxodromic} we prove Theorems~\ref{thm:loxo} and~\ref{thm:wpd}.

\section{Complex}
\label{sec:complex}

\begin{figure}[b]
$$
\mygraph{
!{<0cm,0cm>;<1.5cm,0cm>:<0cm,1cm>::}
!{(-2,0)}*{\typethree}="Q"
!{(0,2)}*-{\typethree}="id"
!{(0,-2)}*{\typethree}="v3''"
!{(2,0)}*{\typethree}="u3"
!{(0,0)}*{\typeone}="x2"
!{(-1,1)}*-{\typetwo}="x2x3"
!{(-1,-1)}*-{\typetwo}="v2''"
!{(1,1)}*-{\typetwo}="u2'"
!{(1,-1)}*-{\typetwo}="x1x2"
!{(-1.5,3)}*{\typeone}="x3"
!{(1.5,3)}*{\typeone}="x1"
!{(0,3.5)}*{\typetwo}="x1x3"
"id"-_<(.28){[x_1, x_2, x_3]}_>(1.1){[x_2,x_3]}"x2x3"-"Q"-_>{[x_1 + Q(x_2),x_2] \quad}"v2''"-"v3''"-"x1x2"-_<{\quad [x_2, x_3+P(x_1,x_2)]}"u3"-"u2'"_>(.85){[x_1,x_2]}-"id"
"x1x2"-"x2"-"v2''"
"v3''"-|<(-.15){[x_1 + Q(x_2),x_2,x_3+P(x_1,x_2)]}"x2"-"id"
"u3"-_>(.75){[x_2]}"x2"-"Q"
"x2x3"-"x2"-"u2'"
"x2x3"-"x3"-^<{[x_3]}^>(1.1){[x_1,x_3]}"x1x3"-^>{[x_1]}"x1"-"u2'"
"id"-"x3" "id"-"x1x3" "id"-"x1"
"Q"-^<{[x_1 + Q(x_2),x_2,x_3]}"x3" "u3"-_<{[x_1,x_2,x_3+P(x_1,x_2)]}"x1"
}
$$
\caption{Some triangles of the complex $\Comp$}\label{fig:complex}
\end{figure}

In this section we recall the construction of the simplicial complex $\Comp_n$ associated
with ${\Tame(\A^n)}$, for $n\geq 2$. See \cite[\S 2.5]{BFL}, \cite{Lamy} for
more details.
For any $1 \le r \le n$, an \emph{$r$-tuple of components} is a polynomial map
\begin{align*}
f\colon\A^n &\to \A^r \\
x = (x_1, \dots, x_n) &\mapsto \left( f_1(x), \dots, f_r(x) \right)
\end{align*}
that can be extended to a tame automorphism $f = (f_1,\dots, f_n)$ of $\A^n$.
We consider the orbits of the action of the affine automorphism group $A_r=\GL_r(\K) \ltimes \K^r$ on the $r$-tuples of components:
\begin{equation*}
[f_1, \dots,f_r] = A_r (f_1, \dots, f_r) = \{ a \circ (f_1, \dots, f_r) \ | \ a \in
A_r\}
\end{equation*}
A \emph{vertex of type $r$} of $\Comp_n$ is such an orbit $[f_1, \dots,f_r]$, and will be usually denoted as $v_r$.
The vertices  $[f_1], [f_1,f_2],\ldots, [f_1, \dots, f_r]$ span an $(r-1)$-simplex of $\Comp_n$.
The tame automorphism group acts on $\Comp_n$ by
isometries, via pre-composition:
$$g\cdot [f_1,\dots,f_r] = [f_1 \circ g^{-1},\dots, f_r \circ g^{-1}]$$
Note that this action is transitive on $(n-1)$-simplices, and in particular, it is transitive on the vertices of any type.
It is easy to see that $\Comp_n$ is connected.

We shall use $\Comp_n$ mainly in the case $n = 3$, but also in the
case $n = 2$, to study the link of a vertex in $\Comp_3$. In fact, $\Comp_2$ is the Bass--Serre tree
corresponding to the splitting $\Aut(\A^2)=A_2\ast_{A_2\cap E_2} E_2$, with $$E_2=\{ (x_1, x_2) \mapsto (ax_1 + P(x_2), bx_2+c)\ | \ a,b\in \K^*, c\in \K\}$$ (see \cite[Prop 2.16]{BFL}). We will also denote $\Comp_2$ as $\T_\K$ to emphasize
the field. Later also the field $\K(x_3)$ will be used instead of $\K$.
We shortly denote by $\Comp = \Comp_3$ the triangle complex for $\TA$.
It has vertices of type $1,2$ and $3$.
We say that an edge in $\Comp$ has \emph{type
$(i,j)$} if it joins a vertex of type $i$ with a vertex of type $j$. By
$\Comp^0, \Comp^1$ we denote the vertex set and the $1$-skeleton of $\Comp$.

We illustrate a part of $\Comp$ in Figure~\ref{fig:complex}. Vertices of types $1,2,$ and $3$ are represented by symbols $\typeone, \typetwo, \typethree,$ respectively.

\section{Trees and links} \label{sec:trees}

In this section we define and explain the relation between four trees related
to the complex $\Comp$. The first is the tree $\T_\K = \Comp_2$ with the action of
$\Aut(\K^2)$. We will define a second tree $\T(v_2) \subset \Comp$ associated with each type 2
vertex $v_2$, isomorphic to~$\T_\K$. Thirdly, we have the tree $\T_{\K(x)}$ (which is the complex $\Comp_2$ constructed starting from the field $\K(x)$ instead of $\K$). Finally, we will have a tree $\T(v_1)$ associated to each
type 1 vertex $v_1$, appearing in a sequence of simplicial projections $\LL(v_1) \to \T(v_1) \to \T_{\K(x)}$, where $\LL(v_1)$
is the link of $v_1$.

\subsection{Tree associated with a type 2 vertex}

For any vertex $v_2$ of $\Comp$ of type $2$ we define the following tree
$\T(v_2) \subset \Comp^1$.
We require the definition to be $\TA$-equivariant, and hence by the transitivity of the action we can assume $v_2 = [x_1,x_2]$.
Then we consider all vertices of the form $[f_1(x_1,x_2)]$ and $[f_1(x_1,x_2), f_2(x_1,x_2)]$, where
$(f_1, f_2) \in \Tame(\A^2)$.
These vertices span a tree $\T({v_2})$ isomorphic to $\T_\K$.
Observe that the embedding $\T(v_2) \subset \Comp$ is not an isometric embedding, since $\T(v_2)$ is contained in the link of a type 1 vertex $v_1$ (for example, if $v_2 = [x_1, x_2]$, then one can take $v_1 = [x_3]$).

We now prove a preparatory lemma about leading terms in the vertices of the
tree $\T_\K$.

\begin{lemma}
\label{lem:leading terms}
Let $[f]$ be a type 1 vertex in $\T_\K$, and consider the connected components of the tree $\T_\K$ with the vertex $[x_1, x_2]$ removed.
\begin{enumerate}[wide]
\item For the unique $(a:b) \in \mathbb P^1_\K$ such that $[f]$ and $[ax_1 + bx_2]$ belong to the same component, we have
$$f(x_1,x_2) = c(ax_1 + bx_2)^d + R(x_1,x_2),$$
with $c \in \K^*$, $d \ge 1$, and
$\deg R<d$.

\item Assume that $[f]\neq [x_2]$ belong to the same component.
Then
$$f(x_1,x_2) = Q(x_2)x_2^{d+1} + cx_1 x_2^d + R(x_1,x_2),$$
with $c \in \K^*$, $d \ge 0$, $\deg R<d+1$, and $Q$ nonconstant.

In particular, if $A \in \K[X]$ is any nonconstant polynomial, then $A(f(x_1,
x_2))$ has the same form.
\end{enumerate}
\end{lemma}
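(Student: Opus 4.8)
The plan is to deduce both parts from the tree structure of $\T_\K$ by induction, the inductive step being the combinatorial incarnation of the Jung--van der Kulk elementary reduction. For part (1) I would first normalise: applying a linear automorphism (which fixes $[x_1,x_2]$ and acts transitively on the directions $(a:b)$) replaces $f$ by its composition with a linear change of coordinates, under which a polynomial of the form $c(ax_1+bx_2)^d+R$ with $\deg R<d$ becomes one of the form $c'x_1^d+R'$ with $\deg R'<d$; so it suffices to show that if $[f]$ lies in the component of $[x_1]$ (and the ``unique $(a:b)$'' is just the direction at $[x_1,x_2]$ of the geodesic to $[f]$), then $f=cx_1^d+R$ with $c\in\K^*$ and $\deg R<d$. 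I would induct on the distance $k$ from $[x_1,x_2]$ to $[f]$ in $\T_\K$. If $k=1$ then $[f]$ is a neighbour of $[x_1,x_2]$ in its own component, hence $[f]=[x_1]$, $f$ is affine with linear part $cx_1$, and we are done. Otherwise the geodesic starts $[x_1,x_2]-[x_1]-v_2-\dots-[f]$ with $v_2=[x_1,\,x_2+P(x_1)]$ for some $P$ with $p:=\deg P\ge 2$ (since $v_2\ne[x_1,x_2]$). The automorphism $\tau\colon(x_1,x_2)\mapsto(x_1,x_2+P(x_1))$ is a tree automorphism fixing $[x_1]$ and sending $v_2$ to $[x_1,x_2]$, so it folds the geodesic and $\tilde f:=f(x_1,x_2-P(x_1))$ represents a type-$1$ vertex at distance $k-2$; computing the $\tau$-image of the vertex following $v_2$ shows $\tilde f$ lies in the component of some $[\alpha x_1+x_2]$, i.e.\ of a direction with nonzero $x_2$-coefficient. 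By the induction hypothesis $\tilde f=\lambda(\alpha x_1+x_2)^{d'}+R'$ with $\lambda\in\K^*$, $d'\ge1$, $\deg R'<d'$. Substituting back, $f=\lambda(\alpha x_1+x_2+P(x_1))^{d'}+R'(x_1,x_2+P(x_1))$; since $p\ge2$ the top-degree part of $\alpha x_1+x_2+P(x_1)$ is a scalar multiple of $x_1^{p}$, so that of $f$ is a nonzero scalar multiple of $x_1^{pd'}$, while all remaining terms --- including $R'(x_1,x_2+P(x_1))$, whose degree is at most $p(d'-1)<pd'$ --- have strictly smaller degree. This yields $f=c''x_1^{pd'}+R''$ with $\deg R''<pd'$, closing the induction.

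For part (2), suppose $[f]\ne[x_2]$ lies in the component of $[x_2]$; then the geodesic starts $[x_1,x_2]-[x_2]-v_2-\dots-[f]$ with $v_2=[x_2,\,x_1+P(x_2)]$, $p:=\deg P\ge2$, and the elementary automorphism $\rho\colon(x_1,x_2)\mapsto(x_1+P(x_2),x_2)$ fixes $[x_2]$ and sends $v_2$ to $[x_1,x_2]$. As before, $\tilde f:=f(x_1-P(x_2),x_2)$ represents a type-$1$ vertex at distance $k-2$, and the $\rho$-image of the vertex following $v_2$ shows it lies in the component of some $[x_1+\alpha x_2]$ --- a direction with nonzero $x_1$-coefficient. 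Applying part (1), $\tilde f=\lambda(x_1+\alpha x_2)^{d'}+R'$ with $\lambda\in\K^*$, $d'\ge1$, $\deg R'<d'$. Write $S(x_2):=P(x_2)+\alpha x_2$, so $\deg S=p\ge2$, and expand $\lambda(x_1+S)^{d'}=\sum_{i=0}^{d'}\lambda\binom{d'}{i}x_1^iS^{d'-i}$: the $i=0$ term is a polynomial in $x_2$ alone of degree $pd'$; the $i=1$ term contributes the $x_1$-linear monomial of largest $x_2$-degree, a scalar (nonzero since $\operatorname{char}\K=0$) times $x_1x_2^{p(d'-1)}$; and the terms with $i\ge2$, together with $R'(x_1+P(x_2),x_2)$, have total degree $<p(d'-1)+1$. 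Hence $f$ has the asserted form with $d:=p(d'-1)$: the coefficient $c$ is the scalar above, $Q(x_2)x_2^{d+1}$ is the part of the $x_1$-free component of $f$ of degree $\ge d+1$ (whence $\deg Q=pd'-(d+1)=p-1\ge1$, so $Q$ is nonconstant), and $R$ collects the remaining terms, which have degree $<d+1$.

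For the final assertion, write $e:=\deg A\ge1$ and $f=P_0(x_2)+P_1(x_2)x_1+P_{\ge2}$, where $P_0$ is the $x_1$-free part (of degree $N:=\deg f$), $P_1=cx_2^d+\dots$ has degree $d$, and $P_{\ge2}$ collects the monomials of $x_1$-degree $\ge2$ (of total degree $\le d$); note $N\ge d+2$. Since $\operatorname{char}\K=0$ we have $\deg A(f)=eN$, and a multinomial expansion of $f^j$ (for $j\le e$) shows that its $x_1$-free part is $P_0^{\,j}$, its coefficient of $x_1$ is $jP_0^{\,j-1}P_1$ of degree $(j-1)N+d$, and every monomial of $x_1$-degree $\ge2$ in $f^j$ has total degree at most $(j-1)N+d$ (here one uses $N\ge d+2$). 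Summing against the coefficients of $A$, the coefficient of $x_1$ in $A(f)$ has degree exactly $\tilde d:=(e-1)N+d$ with nonzero leading coefficient, the $x_1$-free part of $A(f)$ is $A(P_0)$ of degree $eN$, and every monomial of $A(f)$ of $x_1$-degree $\ge2$ has total degree at most $\tilde d$. This exhibits $A(f)$ in the same form, the new ``$Q$'' having degree $eN-(\tilde d+1)=N-d-1=\deg Q\ge1$, hence nonconstant.

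The step I expect to require the most care is the bookkeeping in the two folding arguments: one must verify that, after folding the geodesic at $[x_1]$ (resp.\ at $[x_2]$), the resulting type-$1$ vertex lands in a component whose direction has the correct nonzero coordinate, so that the already-proved statement applies in precisely the shape needed; the ensuing degree estimates, and those in the ``in particular'', are elementary but must be carried out with some attention.
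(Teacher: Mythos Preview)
Your argument is correct, but the organisation differs from the paper's. The paper reduces (1) to (2) by the transitive action of $\GL_2(\K)$ on directions, and then proves (2) by induction along the sequence of consecutive type~1 vertices $[f_0]=[x_2],[f_1],\dots,[f_n]=[f]$ on the geodesic, using only the relation $[f_j]=[f_{j-2}+P(f_{j-1})]$ with $P$ non-affine; this immediately gives the form $(\star)$ with strictly increasing exponents $d_i$. You go in the opposite direction: you prove (1) first, by an induction on distance in which an elementary automorphism ``folds'' the geodesic at $[x_1]$ and drops the distance by $2$, and then you obtain (2) from (1) by a single fold at $[x_2]$ followed by a binomial expansion; you also spell out the ``in particular'' about $A(f)$ in full. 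Both routes are really the same Jung--van der Kulk step read two ways: the paper's is shorter and avoids the change-of-direction bookkeeping you flagged, while yours keeps (1) and (2) logically independent and makes the degree estimates entirely explicit. One small point of exposition: in your induction for (1) you normalise to the $[x_1]$-component but the fold lands you in the component of $[\alpha x_1+x_2]$, so make sure your induction hypothesis is stated for an arbitrary direction $(a:b)$ before you normalise.
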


\begin{proof}
By definition, vertices in the link of $[x_1, x_2]$ are of the form $[ax_1 +
bx_2]$.
Moreover, since the stabiliser of $[x_1, x_2]$ is isomorphic to $A_2 = \GL_2(\K)
\rtimes \K^2$, it acts transitively on these vertices.
So it is sufficient to consider the case where $[f]$ belongs to the same
component as $[x_2]$, and to prove assertion (2).
In fact we are going to prove that the consecutive type~1 vertices $[f_0] = [x_2], \ldots,
[f_n] = [f]$ in the unique embedded path from $[x_2]$ to $[f]$ are of the
form (for $i = 1, \dots, n$)
\begin{equation}f_i(x_1,x_2) = Q_i(x_2)x_2^{d_i+1} + c_ix_1 x_2^{d_i} + \text{lower order terms},\tag{$\star$}
\label{eq:leading terms}
\end{equation}
with the exponents $d_i$ strictly increasing with $i$, and $Q_i$ nonconstant.

Observe that from Section~\ref{sec:complex} it follows that for vertices $[g_1],[g_3]$ in $\T_\K$ joined by an embedded path of length four with centre $[g_2]$, the vertex $[g_3]$ has the form $[g_1 + P(g_2)]$, for some
polynomial $P(X)$ that is non-affine, i.e.\ not of the form $aX + b$.

Applying this observation to $g_1=x_1, g_2=x_2,g_3=f_1$, we obtain (\ref{eq:leading terms})  for $i=1$, with $d_1 = 0$.
Then we proceed by induction and assume that for some $j\geq 2$, we proved (\ref{eq:leading terms}) for all $i<j$.
Then applying the observation above to $g_1=f_{j-2}, g_2=f_{j-1},g_3=f_j$, we arrive at (\ref{eq:leading terms}) for $i=j$, with $d_j > d_{j-1}$.
\end{proof}

\subsection{Links}

It is an easy observation (see \cite[\S1.C]{Lamy}) that the link $\LL(v_2)$ of a vertex of type $2$ in the
complex $\Comp$ is a full bipartite graph, and the link $\LL(v_3)$ of a vertex of type $3$ is the incidence graph of the projective plane $\mathbb P^2_\K$.
Now we study the link $\LL(v_1)$ of a vertex of type $1$ in $\Comp$.
We know from \cite[Lem 5.6]{Lamy} that $\LL(v_1)$ is connected.
By transitivity of the action we can assume $v_1 = [x_3]$.
We describe two trees related to $\LL([x_3])$.

Let $\pi$ be the simplicial map from $\LL([x_3])$ to the tree $\T_{\K(x_3)}$
(see Section~\ref{sec:complex})
mapping $[f_1, x_3] $ to $[f_1]$ and $[f_1, f_2, x_3]$ to $[f_1,f_2]$. The map
$\pi$ is $\Stab([x_3])$-equivariant.
Since $\LL([x_3])$ is connected, the image $\pi
(\LL([x_3]))$ is a subtree of $\T_{\K(x_3)}$. The group $\Stab([x_3])$ acts
transitively on the edges of $\LL([x_3])$,
hence transitively on the edges of $\pi (\LL([x_3]))$.
Denote by $H_1\subset \Stab([x_3])$ the stabiliser of $\pi^{-1}([x_1])$ and by $H_2\subset\Stab([x_3])$ the stabiliser
of $\pi^{-1}([x_1, x_2])$.
By Bass--Serre theory we obtain:

\begin{lemma}
\label{lem:lem1}
$$\Stab([x_3]) = H_1 *_{H_1 \cap H_2} H_2.$$
\end{lemma}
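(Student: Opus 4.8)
The plan is to apply the fundamental theorem of Bass--Serre theory to the action of $\Stab([x_3])$ on the tree $T:=\pi(\LL([x_3]))\subseteq\T_{\K(x_3)}$. First I would check that $T$ is indeed a tree: $\LL([x_3])$ is connected by \cite[Lem 5.6]{Lamy} and $\pi$ is simplicial, so the image $T$ is a connected subgraph of the tree $\T_{\K(x_3)}$, hence a subtree; moreover $\LL([x_3])$ contains an edge, e.g.\ the one joining $[x_1,x_3]$ and $[x_1,x_2,x_3]$, so $T$ contains at least one edge. Since $\pi$ is $\Stab([x_3])$-equivariant, the action on $\LL([x_3])$ induces an action on $T$, and this action preserves the type of vertices (as $\pi$ sends a type $(i,3)$ vertex to a type $i$ vertex), hence is without edge inversions.

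Next I would determine the quotient graph and the edge/vertex stabilizers. Because $\Stab([x_3])$ acts transitively on the edges of $\LL([x_3])$ and $\pi$ is an equivariant surjection onto $T$, the induced action on the edges of $T$ is transitive; combined with type preservation and the fact that every vertex of $T$ is an endpoint of an edge (as $T$ is a tree with at least one edge), the quotient $T/\Stab([x_3])$ is a single edge with two distinct vertices, represented by $[x_1]$ (type $1$) and $[x_1,x_2]$ (type $2$). For the stabilizers: for any vertex $w$ of $T$, equivariance of $\pi$ shows that $g\in\Stab([x_3])$ fixes $w$ if and only if $g$ preserves $\pi^{-1}(w)\subseteq\LL([x_3])$ setwise, so the stabilizer of $[x_1]$ in $T$ is exactly $H_1$ and the stabilizer of $[x_1,x_2]$ is exactly $H_2$; the stabilizer of the edge joining them, being the intersection of the stabilizers of its two (distinct, so uninverted) endpoints, is $H_1\cap H_2$. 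Then Bass--Serre theory applied to this inversion-free action whose quotient is a single edge gives $\Stab([x_3])=H_1*_{H_1\cap H_2}H_2$.

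The only step requiring any care is the identification of the $T$-vertex stabilizers with the groups $H_1$ and $H_2$, which are defined via preimages in $\LL([x_3])$ rather than as stabilizers of vertices of $T$; but this is immediate from the equivariance of $\pi$. Everything else is a direct invocation of the standard structure theorem for groups acting on trees, so I do not anticipate a genuine obstacle here.
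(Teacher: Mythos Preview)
Your proposal is correct and follows exactly the approach the paper intends: the paper simply writes ``By Bass--Serre theory we obtain'' after setting up the equivariant projection $\pi$, the transitivity on edges, and the definitions of $H_1,H_2$, and you have spelled out precisely those details (subtree, no inversions via type preservation, single-edge quotient, identification of vertex stabilizers with the $H_i$ via equivariance of $\pi$). There is nothing to add or change.
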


We can give explicit descriptions of $H_1$ and $H_2$ (compare with
\cite[\S4.1]{BFL}).

\begin{lemma}
\label{lem:lem2}
\begin{align*}
H_1 &= \left\lbrace \left(ax_1 + t(x_3),\, dx_2 + P(x_1, x_3),\,
\alpha x_3 + \beta \right)\ | \ a,d \in \K^* \right\rbrace,\\
H_2 &= \left\lbrace \left( a(x_3)x_1 + b(x_3)x_2 + t(x_3),\, c(x_3)x_1 +
d(x_3)x_2 + t'(x_3),\,
\alpha x_3 + \beta \right) \right\rbrace \\
&\simeq \left( \GL_2(\K[x_3]) \ltimes \K[x_3]^2 \right) \rtimes A_1,\\
H_1 \cap H_2 &= \left\lbrace \left(ax_1 + t(x_3),\, dx_2 + c(x_3)x_1 +
t'(x_3),\,
\alpha x_3 + \beta \right)\ | \ a,d \in \K^* \right\rbrace.
\end{align*}
\end{lemma}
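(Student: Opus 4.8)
The plan is to unwind directly the definitions of $H_1$ and $H_2$ as the stabilisers of the $\pi$-fibres over $[x_1]$ and over $[x_1,x_2]$, using the action formula of Section~\ref{sec:complex} and the constancy of the Jacobian determinant of a polynomial automorphism. First I would record that $g\in\TA$ lies in $\Stab([x_3])$ precisely when the third coordinate of $g^{-1}$ — equivalently, applying this to $g^{-1}$, of $g$ itself — has the form $\alpha x_3+\beta$ with $\alpha\in\K^*$ and $\beta\in\K$. Writing $g^{-1}=(h_1,h_2,h_3)$, the action formula gives $g\cdot[f_1,x_3]=[f_1\circ g^{-1},\,h_3]=[f_1\circ g^{-1},\,x_3]$, the affine change in the last slot being absorbed by $A_2$, and similarly on type~$3$ vertices. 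Since $[x_1,x_3]$ and $[x_1,x_2,x_3]$ are vertices of the simplex $\{[x_3],[x_1,x_3],[x_1,x_2,x_3]\}$ spanned by the tame automorphism $(x_3,x_1,x_2)$, hence lie in $\LL([x_3])$, the $\Stab([x_3])$-equivariance of $\pi$ reduces matters to a single vertex in each case: $g\in H_1$ iff $g$ fixes the vertex $[x_1]$ of $\pi(\LL([x_3]))$ iff $g\cdot[x_1,x_3]\in\pi^{-1}([x_1])$, and likewise $g\in H_2$ iff $g\cdot[x_1,x_2,x_3]\in\pi^{-1}([x_1,x_2])$.

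For $H_1$: the condition $g\cdot[x_1,x_3]=[h_1,x_3]\in\pi^{-1}([x_1])$ means $[h_1]=[x_1]$ in $\T_{\K(x_3)}$, i.e.\ $h_1=a\,x_1+b$ with $a\in\K(x_3)^*$ and $b\in\K(x_3)$; since $h_1\in\K[x_1,x_2,x_3]$, in fact $a,b\in\K[x_3]$ and $h_1$ involves no $x_2$. Expanding the Jacobian determinant of $g^{-1}=(a(x_3)x_1+b(x_3),\,h_2,\,\alpha'x_3+\beta')$ along its last row gives $\alpha'\,a(x_3)\,\partial_{x_2}h_2\in\K^*$, and a product of polynomials is a nonzero constant only if each factor is, so $a(x_3)=a\in\K^*$ and $\partial_{x_2}h_2=d\in\K^*$, whence $h_2=d\,x_2+P(x_1,x_3)$. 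Thus $g^{-1}$, and — since the displayed family is a subgroup, as back-substitution shows — also $g$, has the form asserted for $H_1$; conversely, every automorphism of that shape fixes $[x_3]$ and carries each vertex $[a(x_3)x_1+b(x_3),x_3]$ of $\pi^{-1}([x_1])$ back into $\pi^{-1}([x_1])$, hence lies in $H_1$.

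For $H_2$ the same argument applied to the pair $(h_1,h_2)$ gives $\binom{h_1}{h_2}=M\binom{x_1}{x_2}+v$ with $M\in\GL_2(\K(x_3))$ and $v\in\K(x_3)^2$; polynomiality forces the entries of $M$ and $v$ to lie in $\K[x_3]$, and the Jacobian determinant $\alpha'\det M\in\K^*$ forces $\det M\in\K^*$, that is $M\in\GL_2(\K[x_3])$, which (with the converse as before) is exactly the stated description. Writing out the composition law in $\TA$ for triples $(M,v,\phi)$ with $\phi=\alpha x_3+\beta\in A_1$ acting on the entries by substitution in the variable $x_3$ identifies $H_2$ with $(\GL_2(\K[x_3])\ltimes\K[x_3]^2)\rtimes A_1$. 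Finally, an element of $H_1$ lies in $H_2$ exactly when its second coordinate $d\,x_2+P(x_1,x_3)$ is affine-linear in $x_1$, i.e.\ $P(x_1,x_3)=c(x_3)x_1+t'(x_3)$ — the relevant matrix $\left(\begin{smallmatrix}a&0\\ c(x_3)&d\end{smallmatrix}\right)$ then automatically having determinant $ad\in\K^*$ — which yields the description of $H_1\cap H_2$.

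The argument is essentially bookkeeping, relying only on the description of $\Comp$ from Section~\ref{sec:complex} and the properties of $\pi$ recalled above. The one point needing care is the reduction in the first paragraph, from ``$g$ stabilises a $\pi$-fibre'' to a condition on a single coordinate (or coordinate pair) of $g^{-1}$; this uses the equivariance of $\pi$ together with the fact that affine modifications of already-named coordinates are invisible at the level of vertices of $\Comp$. Everything else is forced by the Jacobian-determinant computation that pins down which coefficients must be constants.
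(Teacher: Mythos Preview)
Your proof is correct and follows essentially the same approach as the paper: unwind the definitions of $H_1,H_2$ via the action formula and the equivariance of $\pi$, then use the constancy of the Jacobian determinant to pin down which coefficients must lie in $\K^*$ rather than $\K[x_3]$. The only cosmetic difference is that the paper first describes the fibre $\pi^{-1}([x_1])$ explicitly and works with $h$ directly (computing $h^{-1}\cdot[x_1,x_3]$), whereas you pass through $g^{-1}$ and then invoke closure under inversion; the Jacobian computation and its consequences are identical.
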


\begin{proof}
Take $[f_1, x_3] \in  \pi^{-1}([x_1])$, that is, $[f_1] = [x_1]$ in the tree
$\T_{\K(x_3)}$.
So $f_1 = a(x_3) x_1 + t(x_3)$ for some polynomials $a,t$, and moreover by
definition there exists $f_2 \in \K[x_1, x_2, x_3]$ such that $f = (f_1, f_2,
x_3) \in \TA$.
In particular the Jacobian of $f$ is a multiple of $a$, so $a$ must be a
constant.
We obtain
$$  \pi^{-1}([x_1])= \left\lbrace [ax_1 + t(x_3), x_3]\ | \  a \in \K^*, t \in
\K[x_3] \right\rbrace.$$
Let $h = (h_1, h_2, h_3) \in H_1 = \Stab( \pi^{-1}([x_1]) )$. Since $h\in \Stab([x_3])$, we have $h_3 = \alpha x_3 + \beta$.
Moreover, $h^{-1}\cdot[x_1,x_3] = [h_1, x_3]$, so $h_1 = ax_1 + t(x_3)$ for some $a
\in \K^*$ and $t \in \K[x_3]$.
Then we write $h_2 = \sum_{i \ge 0} c_i(x_1, x_3) x_2^i$, and compute the
Jacobian of $h$, which must be a constant:
$$\Jac(h) = \left\lvert
\begin{matrix}
a & 0 & t'(x_3) \\
* & \sum_{i \ge 1} ic_i x_2^{i-1} & * \\
0 & 0 & \alpha
\end{matrix}
\right\rvert
= a \alpha \sum_{i \ge 1} ic_i x_2^{i-1}$$
We obtain $c_1 \in \K^*$ and $c_i = 0$ for all $i \ge 2$, hence the expected
expression $h_2 = dx_2 + P(x_1, x_3)$, where $P = c_0$ and $d = c_1$.

The proof for the form of elements $h =(h_1, h_2, h_3) \in H_2$ is in fact
easier. Indeed, we have $[h_1, h_2] = [x_1, x_2]$ in $\T_{\K(x_3)}$ if and only if $(h_1, h_2)$
is an element in the affine group over $\K(x_3)$, as desired.
\end{proof}

Using Lemma~\ref{lem:lem2} and the classical theorem of Nagao
\cite[\S1.6]{Serre} about the amalgamated product structure of
$\GL_2(\K[x_3])$, we obtain (proof left to the reader, see
\cite[Lem 4.8]{BFL} for a very similar result):

\begin{lemma}
\label{lem:lem3}
$$H_2 = K_1 *_{K_1 \cap K_2} K_2$$
where $K_2 = \left( \GL_2(\K) \ltimes \K[x_3]^2 \right) \rtimes A_1$ and $K_1 =
\left( \left\lbrace \begin{pmatrix} a & 0 \\ c(x_3) & d \end{pmatrix}
\right\rbrace  \ltimes \K[x_3]^2 \right) \rtimes A_1$.
\end{lemma}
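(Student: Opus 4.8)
The plan is to derive Lemma~\ref{lem:lem3} from Nagao's theorem on the amalgamated structure of $\GL_2(\K[x_3])$ together with the explicit description of $H_2$ from Lemma~\ref{lem:lem2}, in the same spirit as \cite[Lem~4.8]{BFL}. For $R=\K$ or $R=\K[x_3]$, write $B(R)\subset\GL_2(R)$ for the subgroup of lower triangular matrices with units on the diagonal. Nagao's theorem \cite[\S1.6]{Serre} (stated there for upper triangular matrices, but conjugate by the antidiagonal permutation matrix to obtain the lower triangular version matching the definition of $K_1$) asserts
$$\GL_2(\K[x_3]) = \GL_2(\K) *_{B(\K)} B(\K[x_3]).$$
Let $T$ be the corresponding Bass--Serre tree, with a distinguished edge $e$ whose stabiliser is $B(\K)$ and whose two endpoints are stabilised by $\GL_2(\K)$ and $B(\K[x_3])$ respectively.

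Next I would incorporate the two semidirect product factors appearing in the description $H_2 \cong (\GL_2(\K[x_3]) \ltimes \K[x_3]^2) \rtimes A_1$ from Lemma~\ref{lem:lem2}. Here $A_1$ acts on $\GL_2(\K[x_3])$ by substituting $x_3\mapsto\alpha x_3+\beta$ in the matrix entries; this action fixes $\GL_2(\K)$ and $B(\K)$ pointwise and preserves $B(\K[x_3])$, hence preserves the Nagao splitting, and therefore extends to an action of $A_1$ on $T$ that fixes the edge $e$ and is equivariant with respect to the $A_1$-action on $\GL_2(\K[x_3])$. Consequently $\GL_2(\K[x_3])\rtimes A_1$ acts on $T$ with fundamental domain $e$, with $\Stab(e)=B(\K)\rtimes A_1$ and the endpoints of $e$ stabilised by $\GL_2(\K)\rtimes A_1$ and $B(\K[x_3])\rtimes A_1$. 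Since the translation subgroup $\K[x_3]^2$ is normal in $H_2$ (it is normalised by $\GL_2(\K[x_3])$ through the module action and by $A_1$ through substitution), composing the quotient map $H_2\twoheadrightarrow\GL_2(\K[x_3])\rtimes A_1$ with this action produces an action of $H_2$ on $T$, again with fundamental domain $e$.

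It then remains to read off the stabilisers for $H_2$ acting on $T$. As $\K[x_3]^2$ lies in the kernel of the quotient map, the stabiliser of an endpoint of $e$ is the full preimage of $\GL_2(\K)\rtimes A_1$, respectively of $B(\K[x_3])\rtimes A_1$, namely $K_2=(\GL_2(\K)\ltimes\K[x_3]^2)\rtimes A_1$, respectively $K_1=(B(\K[x_3])\ltimes\K[x_3]^2)\rtimes A_1$; and the stabiliser of $e$ is the preimage of $B(\K)\rtimes A_1$, which equals $K_1\cap K_2$ because taking preimages commutes with intersections. The fundamental theorem of Bass--Serre theory \cite{Serre}, applied to this action of $H_2$ on the tree $T$ which is without inversions and has a single edge as fundamental domain, then yields $H_2=K_1 *_{K_1\cap K_2} K_2$.

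The one step that requires genuine care is checking that the $A_1$-action preserves the Nagao splitting and so induces an edge-fixing, equivariant action of $A_1$ on $T$; this is a routine verification on matrix entries, after which everything reduces to formal bookkeeping with semidirect products and preimages, exactly as in \cite[Lem~4.8]{BFL}. I do not anticipate any real obstacle — which is presumably why the proof is left to the reader in the text.
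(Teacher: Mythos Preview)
Your proposal is correct and follows exactly the route the paper indicates: it invokes Nagao's theorem for $\GL_2(\K[x_3])$, then incorporates the normal subgroup $\K[x_3]^2$ and the $A_1$-action by substitution to obtain an action of $H_2$ on the Bass--Serre tree with a single edge as fundamental domain, reading off the stabilisers $K_1$, $K_2$, and $K_1\cap K_2$ --- precisely the argument of \cite[Lem~4.8]{BFL} to which the paper refers. The one point you single out for care (that the $A_1$-action preserves the Nagao splitting and fixes the base edge) is indeed the only nontrivial verification, and your treatment of it is sound.
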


Finally, observing that $H_1 \cap H_2 = K_1$, and consequently $H_1\cap
K_2=K_1\cap K_2$, by combining Lemmas~\ref{lem:lem1} and~\ref{lem:lem3} we
obtain an alternative decomposition of $\Stab([x_3])$:

\begin{prop}
\label{prop:maintree}
$$\Stab([x_3]) = H_1 *_{H_1 \cap K_2} K_2.$$
\end{prop}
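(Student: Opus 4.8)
The plan is to combine the two amalgamated‐product decompositions of $\Stab([x_3])$ already on the table. From Lemma~\ref{lem:lem1} we have $\Stab([x_3]) = H_1 *_{H_1\cap H_2} H_2$, and from Lemma~\ref{lem:lem3} we have $H_2 = K_1 *_{K_1\cap K_2} K_2$. The key numerical observation, recorded just before the statement, is that $H_1\cap H_2 = K_1$: indeed, comparing the explicit description of $H_1\cap H_2$ in Lemma~\ref{lem:lem2} with $K_1$ in Lemma~\ref{lem:lem3}, both consist of maps $(ax_1+t(x_3),\, dx_2+c(x_3)x_1+t'(x_3),\, \alpha x_3+\beta)$ with $a,d\in\K^*$, i.e.\ lower‐triangular linear part over $\K[x_3]$ with scalar diagonal, the off‐diagonal entry a polynomial in $x_3$, plus an arbitrary affine translation part in $x_3$ and an affine action on the $x_3$‐coordinate. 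So the amalgamating subgroup of the first decomposition is exactly one of the two factors of the second decomposition.

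First I would make precise the identification $H_1\cap H_2 = K_1$ by matching coordinates, so that the edge‐group of the $H_1 *_{H_1\cap H_2} H_2$ splitting sits as the vertex‐group $K_1$ inside $H_2 = K_1 *_{K_1\cap K_2} K_2$. Next I would invoke the general principle that one may ``unfold'' an amalgam along a factor: if $G = A *_C B$ and moreover $B = C *_{C\cap D} D$ for some subgroup $D\le B$, then $G = A *_{C\cap D} D$. Concretely, this is a statement about graphs of groups — the Bass–Serre tree for $G = A *_C B$ has an edge with stabiliser $C$; replacing the vertex with stabiliser $B$ by its own Bass–Serre tree for $B = C *_{C\cap D} D$ and collapsing the resulting tree of groups appropriately yields the splitting $G = A *_{C\cap D} D$. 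Applying this with $A = H_1$, $B = H_2$, $C = H_1\cap H_2 = K_1$, $D = K_2$, and using $C\cap D = K_1\cap K_2 = H_1\cap K_2$ (which holds since $H_1\cap K_2 = (H_1\cap H_2)\cap K_2 = K_1\cap K_2$, as $K_2\subset H_2$), we land exactly at $\Stab([x_3]) = H_1 *_{H_1\cap K_2} K_2$.

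Alternatively, and perhaps more transparently for a reader, I would argue directly with Bass–Serre trees: let $T_1$ be the tree for $\Stab([x_3]) = H_1 *_{K_1} H_2$ and $T_2$ the tree for $H_2 = K_1 *_{K_1\cap K_2} K_2$; since $H_2$ acts on $T_2$ and $H_2$ is a vertex stabiliser in $T_1$, one builds a new $\Stab([x_3])$‐tree $T$ by blowing up each $H_2$‐vertex of $T_1$ into a copy of $T_2$, attaching the incident $K_1$‐edges to the $K_1$‐vertex of $T_2$ (this is well defined because the edge stabiliser $K_1$ of $T_1$ is precisely a vertex stabiliser of $T_2$). The tree $T$ has two orbits of vertices, with stabilisers $H_1$ and $K_2$, and one orbit of edges with stabiliser $K_1\cap K_2 = H_1\cap K_2$; one checks $T$ is a tree (it is obtained from trees by gluing along single vertices) and the action has the stated fundamental domain, so Bass–Serre theory gives the claimed splitting.

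The main obstacle is the verification $H_1\cap H_2 = K_1$: one must carefully read off from Lemma~\ref{lem:lem2} and Lemma~\ref{lem:lem3} that the two explicit groups of triples coincide, and in particular that the Nagao‐type factor $K_1$ of $H_2$ — defined by the lower‐triangular condition on the $\GL_2(\K[x_3])$ part — is the same as the intersection $H_1\cap H_2$, whose $x_2$‐coordinate was forced by a Jacobian computation to have the form $dx_2 + c(x_3)x_1 + t'(x_3)$. Everything else is a formal manipulation of graphs of groups; since the excerpt explicitly says ``proof left to the reader'' for Lemma~\ref{lem:lem3} and flags the analogous unfolding as routine, the intended proof here is a one‐paragraph remark recording $H_1\cap H_2 = K_1$, hence $H_1\cap K_2 = K_1\cap K_2$, and then citing the combination of Lemmas~\ref{lem:lem1} and~\ref{lem:lem3}.
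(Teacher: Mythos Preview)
Your proposal is correct and matches the paper's approach exactly: the paper's entire proof is the sentence preceding the proposition, namely ``observing that $H_1 \cap H_2 = K_1$, and consequently $H_1\cap K_2=K_1\cap K_2$, by combining Lemmas~\ref{lem:lem1} and~\ref{lem:lem3}''. Your final paragraph identifies this precisely, and the extra detail you supply (the unfolding of the graph of groups / blowing up the $H_2$-vertex into the Bass--Serre tree of $H_2$) is a correct elaboration of what the paper leaves implicit.
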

\noindent Denote by  $\T([x_3])$ the Bass--Serre tree of
Proposition~\ref{prop:maintree}.
Since $K_2$ is a supergroup in $\Stab([x_3])$ of the stabiliser of
$[x_1,x_2,x_3]$, the tree $\T([x_3])$ admits a projection
$\sigma :  \LL([x_3]) \to \T([x_3])$
such that the following diagram commutes:
$$\xymatrix{\LL([x_3]) \ar[rr]^\pi \ar[dr]_\sigma & & \T_{\K(x_3)} \\
 & \T([x_3]) \ar[ur]}$$

\begin{rem}
\label{rem:same sigma}
A type 3 vertex $v_3 \in \LL([x_3])$ satisfies $\sigma(v_3) = \sigma([x_1, x_2,
x_3])$ if and only if $v_3 = [x_1 + t(x_3), x_2 + t'(x_3), x_3]$ for some $t,
t' \in \K[x_3]$.
Indeed, by definition $v_3$ is the image of $[x_1, x_2,
x_3]$ under the action of an element of $K_2$.
Since a representative of $v_3$ is defined up to an affine map, and  $
\GL_2(\K) \times A_1$ is a subgroup of the affine group $A_3$, we get the
above form for $v_3$.
\end{rem}

\section{Diagrams and Curvature}
\label{sec:curvature}

A \emph{disc diagram} $\D$ is a combinatorial (not necessarily simplicial)
complex built of triangles, which is homeomorphic to a disc.
A \emph{disc diagram in $\Comp$} is a combinatorial map of a disc diagram $\D$
into $\Comp$ (not necessarily an embedding).
In that case a vertex of $\D$ has \emph{type $i$}, if its image in $\Comp$ has
type $i$. A disc diagram in $\Comp$ is \emph{reduced}, if it is a local
embedding at the interior points of edges and at interior vertices of type~$1$. We
similarly define \emph{reduced sphere diagrams}.

\begin{lemma}
\label{lem:disc_exist}
Each embedded loop in $\Comp^{1}$ bounds a
reduced disc diagram.
\end{lemma}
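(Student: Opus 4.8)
The plan is to produce \emph{some} disc diagram bounding $\ell$ from the simple connectivity of $\Comp$, and then to pass to one of minimal complexity and show that minimality forces it to be reduced. Recall from the introduction that $\Comp$ is simply connected (this is the content of the amalgamated product description of $\TA$). Hence the embedded loop $\ell$ is null-homotopic in $\Comp$, and by the standard correspondence between null-homotopic edge loops in a combinatorial $2$-complex and disc diagrams (van Kampen's lemma) there is a disc diagram $\D\to\Comp$ whose boundary circle maps homeomorphically onto $\ell$; here $\D$ is a combinatorial complex homeomorphic to a closed disc equipped with a combinatorial map to $\Comp$. Among all disc diagrams bounding $\ell$, choose one --- still denoted $\D$ --- with the least number of triangles. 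The claim is that $\D$ is reduced.

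Suppose first that $\D$ fails to be a local embedding at an interior point $p$ of an edge $e$. A boundary edge lies in a single triangle of $\D$, so $e$ is interior, with two triangles $t_1\ne t_2$ along it; the failure at $p$ forces $t_1$ and $t_2$ to be carried onto a common triangle $T$ of $\Comp$, with $e$ carried onto a common edge of $T$. Since the three vertices of $T$ have pairwise distinct types, $t_1$ and $t_2$ are carried onto $T$ by the ``same'' combinatorial map. Deleting $t_1\cup t_2$ and gluing the two remaining length-two boundary arcs along the induced identification --- legitimate because disc diagrams need not be simplicial --- produces a disc diagram bounding $\ell$ with two fewer triangles, contradicting minimality. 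The only point requiring attention is that the result is again homeomorphic to a disc, a routine local modification.

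Suppose next that $\D$ is a local embedding at every interior point of an edge, but not at some interior vertex $v$ of type $1$, with image $v'$. The link of $v$ in $\D$ is a cycle $C$, sent by $\D\to\Comp$ to an edge loop $\bar C$ in $\LL(v')$. Two edges of $C$ bounding a common triangle at $v$ are carried onto the two sides at $v'$ of a triangle of $\Comp$; since the three vertices of a triangle of $\Comp$ have distinct types, these two sides are distinct. Combined with the absence of reducible pairs in $\D$ (which holds since $\D$ is a local embedding at every interior edge point), this shows that $\bar C$ backtracks nowhere, so the failure of injectivity of $C\to\LL(v')$ is a repeated vertex: there are two edges $e_a\ne e_b$ of $\D$ at $v$ with one and the same image edge $\epsilon=(v',\bar u)$ of $\Comp$. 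If the cyclic distance from $e_a$ to $e_b$ along $C$ equals $2$, the two triangles of $\D$ they span at $v$ are carried onto a common triangle of $\Comp$ and form a reducible pair, which has been excluded. Otherwise one performs surgery on $\D$ along $e_a$ and $e_b$: writing $y_a,y_b$ for their second endpoints (both mapped to $\bar u$), the fan $S$ of triangles of $\D$ lying between $e_a$ and $e_b$ has boundary $e_a\cup\beta\cup e_b$ carried to a path of the form $\epsilon\cdot\bar\beta\cdot\epsilon^{-1}$, and one cuts $\D$ along $e_a\cup e_b$ and reglues $e_a$ to $e_b$ via their common image $\epsilon$. Carried out carefully this yields a disc diagram bounding $\ell$ with strictly fewer triangles, again contradicting minimality and finishing the proof.

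\textbf{Main obstacle.} The surgery at a type-$1$ vertex is the delicate step. One must treat separately the cases $y_a=y_b$ and $y_a\ne y_b$, together with whether $y_a,y_b$ or the path $\beta$ meet $\partial\D$, and in each case verify that cutting along $e_a\cup e_b$ and regluing returns a genuine \emph{disc} diagram --- not a sphere or an annulus --- whose number of triangles is strictly smaller. When $y_a=y_b$ the loop $e_a\cup e_b$ bounds a sub-disc containing at least one triangle which gets excised before the two halves of that loop are zipped together, and this is the model case; the remaining configurations are handled by similar local moves. The freedom to use non-simplicial disc diagrams is essential throughout, and choosing the repeated vertex $\bar u$ so that the cyclic distance between $e_a$ and $e_b$ is as small as possible keeps the bookkeeping under control.
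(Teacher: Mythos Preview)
Your argument is correct through the edge step: minimality of area forces the map to be a local embedding at interior points of edges, by the standard cancellation of a folded pair of triangles. The gap is at the type~$1$ vertex step.

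You claim that if two edges $e_a\neq e_b$ at an interior type~$1$ vertex $v$ have the same image $\epsilon$, then a cut-and-reglue surgery produces a disc diagram with strictly fewer triangles. But once $\D$ has minimal area it is simplicial, so the link of $v$ is an \emph{embedded} cycle in $\D$; hence the other endpoints $y_a,y_b$ of $e_a,e_b$ are \emph{distinct} vertices of $\D$. Your ``model case'' $y_a=y_b$, in which a subdisc gets excised, is therefore vacuous. In the actual case $y_a\neq y_b$, no triangle is removed: cutting along the interior arc $e_a\cup e_b$ and regluing $e_a$ to $e_b$ --- however one interprets this, e.g.\ as splitting $v$ in two and identifying $y_a$ with $y_b$ --- preserves the number of triangles. (Concretely, the fan between $e_a$ and $e_b$ with $k$ triangles has a $(k+2)$-gon boundary, and $k$ is already the minimal area of any simplicial filling of a $(k+2)$-gon, so there is nothing smaller to replace it with.) Thus minimality of area does not preclude the failure of local embedding at interior type~$1$ vertices, and your contradiction does not materialise; the assertion that ``the remaining configurations are handled by similar local moves'' hides a move that does not exist.

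The paper's proof decouples the two conditions. After minimising area to handle edges, one performs, for each problematic interior type~$1$ vertex $v$, the \emph{area-preserving} surgery of cutting along all edges $e_1,\dots,e_n$ at $v$ with a given image and regluing the resulting $2n$-gon so that their other endpoints become identified (and $v$ is split into $n$). This resolves the failure at $v$ without creating new ones, since the identified vertex has type~$2$ or~$3$, where no local embedding is required. The result is still a topological disc precisely because $\partial\D$ was assumed embedded; finitely many iterations yield a reduced diagram.
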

\begin{proof}
By \cite[Prop 5.7]{Lamy}, the complex $\Comp$ is simply
connected. Thus by the relative simplicial approximation theorem, there is a
disc diagram $\D\to \Comp$ with prescribed $\partial \D$.
Moreover, if $\D$ has minimal area, then it is automatically simplicial and
$\D\to \Comp$ is a local embedding at edges. Finally,
to obtain that $\D\to \Comp$ is a local embedding at interior type 1 vertices, it
suffices to perform the following operation on $\D$.
For each type 1 vertex $v$ in $\D$ with edges $e_1,\ldots,e_n$ starting at $v$
that are mapped to the same edge of type $(1,i)$ in $\Comp$, we cut $\D$ along $e_1\cup \ldots
\cup e_n$ and reglue the arising $2n$-gon so that all type $i$ vertices become
identified. Repeating this operation finitely many times and keeping the notation $\D$ for the reglued disc, we obtain that
$\D\to \Comp$ is reduced. Note that $\D$ is still a topological disc, since we assumed that $\partial \D$ embeds.
\end{proof}

\subsection{Arrows}

Let $\D\rightarrow \Comp$ be a reduced disc diagram in $\Comp$.
Let $T, T'$ be triangles in~$\D$ with a common edge $e$ of type $(1,2)$. We will
now describe, when do we equip this edge with an arrow (an orientation). We
require the definition to be $\TA$-equivariant and translate $T,T'$ to the
following pair.

\begin{lemma}[see {\cite[Cor 1.5]{Lamy}}] \label{lem:adjacent}
Two triangles in $\Comp$ adjacent along an edge of type $(1,2)$ can be sent by an element of $\TA$ to
\begin{align*}
T &= [x_1, x_2, x_3], [x_2, x_3], [x_3] \\
T' &= [x_1+ P(x_2, x_3), x_2, x_3], [x_2, x_3], [x_3]
\end{align*}
with $P \in \K[x_2, x_3]$ of degree at least 2.
\end{lemma}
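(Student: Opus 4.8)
The plan is to reduce an arbitrary pair of triangles adjacent along a type $(1,2)$ edge to the stated normal form using the transitivity properties of the $\TA$-action recorded in Section~\ref{sec:complex}, and then to identify the degree constraint on $P$. First I would observe that, since $\TA$ acts transitively on $2$-simplices of $\Comp$, we may apply an element of $\TA$ so that the first triangle $T$ becomes the standard triangle $[x_1,x_2,x_3],[x_2,x_3],[x_3]$. The second triangle $T'$ then shares with $T$ the edge $e$ joining the type $1$ vertex $[x_3]$ and the type $2$ vertex $[x_2,x_3]$, so $T'$ is of the form $v_3,[x_2,x_3],[x_3]$ for some type $3$ vertex $v_3$ in the link of this edge. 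The task is thus to describe the type $3$ vertices $v_3$ containing $[x_2,x_3]$ as a sub-$2$-tuple.

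Next I would analyze which $v_3=[g_1,g_2,g_3]$ satisfy $[g_2,g_3]=[x_2,x_3]$ (after possibly reordering/relabeling the components, which is allowed since a vertex of type $3$ is an $A_3$-orbit and we are free to pre-compose the triangle-defining flag appropriately). Having $[g_2,g_3]=[x_2,x_3]$ means $(g_2,g_3)$ differs from $(x_2,x_3)$ by an element of $A_2$; absorbing this affine change into the representative, we may take $g_2=x_2$, $g_3=x_3$. Then $(g_1,x_2,x_3)\in\TA$ forces, by looking at the Jacobian, that $g_1$ depends on $x_1$ linearly with nonzero constant coefficient, i.e. $g_1=ax_1+P(x_2,x_3)$ with $a\in\K^*$. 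Rescaling $g_1$ by $a^{-1}$ (again an affine operation on the representative), we get $v_3=[x_1+P(x_2,x_3),x_2,x_3]$, which is the claimed form.

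It remains to pin down the degree hypothesis $\deg P\ge 2$. If $P$ were affine in $(x_2,x_3)$, say $P=bx_2+cx_3+d$, then $x_1+P(x_2,x_3)$ would be an affine combination of $x_1,x_2,x_3$, so $[x_1+P(x_2,x_3),x_2,x_3]=[x_1,x_2,x_3]$ as a type $3$ vertex, forcing $T'=T$. Since $T$ and $T'$ are genuinely distinct triangles sharing the edge $e$, we must have $\deg P\ge 2$. Conversely any such $P$ does give a valid distinct triangle, since $(x_1+P(x_2,x_3),x_2,x_3)$ is visibly an elementary (hence tame) automorphism.

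The only mildly delicate point is bookkeeping the freedom to relabel the three components of $v_3$ when matching $[x_2,x_3]$ to a sub-pair: one should check that the definition of a triangle as the flag $v_1\subset v_2\subset v_3$ indeed allows $v_2$ to sit inside $v_3$ as any $A_2$-orbit of a $2$-subtuple, which is exactly how the simplices of $\Comp_n$ were defined in Section~\ref{sec:complex}. Granting this, the argument is a short Jacobian computation together with the transitivity of $\TA$ on $2$-simplices, and no real obstacle arises. (Indeed the statement is attributed to \cite[Cor 1.5]{Lamy}, so the present proof is essentially a recollection of that fact.)
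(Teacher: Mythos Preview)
Your argument is correct, and the paper itself does not supply a proof of this lemma: it is stated with a bare citation to \cite[Cor 1.5]{Lamy}. Your outline---normalize $T$ using transitivity of $\TA$ on $2$-simplices, then identify the type~$3$ vertices above the edge $[x_3],[x_2,x_3]$ by computing the Jacobian of $(g_1,x_2,x_3)$, and finally rule out affine $P$ by noting that it would force $T'=T$---is exactly the natural argument and almost certainly the one in the cited reference. The bookkeeping remark about matching $v_2$ to a sub-pair of $v_3$ is handled by the fact that $A_3$ contains all coordinate permutations, so $[x_3,x_2,f_3]=[f_3,x_2,x_3]$; you identified this correctly.
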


In the situation of Lemma~\ref{lem:adjacent}, if there exists a vertex
$v_1=[f(x_2, x_3)] \in \T([x_2, x_3])$ and a polynomial $A(X) \in \K[X]$ such that $P(x_2,x_3) =
A(f(x_2, x_3))$, we say that the common edge $e$ between $T$ and $T'$ is
\emph{oriented}.
By Lemma \ref{lem:leading terms}(1), we know that $P$ has the following form:
$$P(x_2,x_3) = c(ax_2 + bx_3)^d + \text{ lower other terms}$$

To specify the orientation, we put an \emph{arrow} on the edge $e$ pointing
towards the connected component of $\T_{[x_2,x_3]} - e$ containing
$[ax_2 + bx_3]$ (see Figure~\ref{fig:arrows}).
Moreover, if $P$ is a polynomial in $x_3$ (up to a linear term in $x_2$), then
we say that the arrow on~$e$ is \emph{terminal} and use a double arrow in
figures. However, in future figures a single arrow might also represent a
terminal one.
If $e$ is nonoriented, we use a wavy edge in figures.

\begin{rem}
In the definition of orientation, we do not claim that there is a unique way to
write $P(x_2,x_3) = A(f(x_2, x_3))$.
Indeed, consider the following example, where $d \ge 2$:
\begin{align*}
T' &= [x_1+ x_2 + x_3^d, x_2, x_3], [x_2, x_3], [x_3] \\
  &= [x_1+ x_3^d, x_2, x_3], [x_2, x_3], [x_3]
\end{align*}
One can treat $T'$ as $[x_1 + A(f), x_2,x_3],
[x_2, x_3], [x_3]$ in two ways:
\begin{align*}
\text{either } A(X) &= X, & f &= x_2 + x_3^d; \\
\text{or } A(X) &= X^d, & f &= x_3.
\end{align*}
However, the leading term of $A(f)$ is uniquely defined, independently of such a choice.
\end{rem}

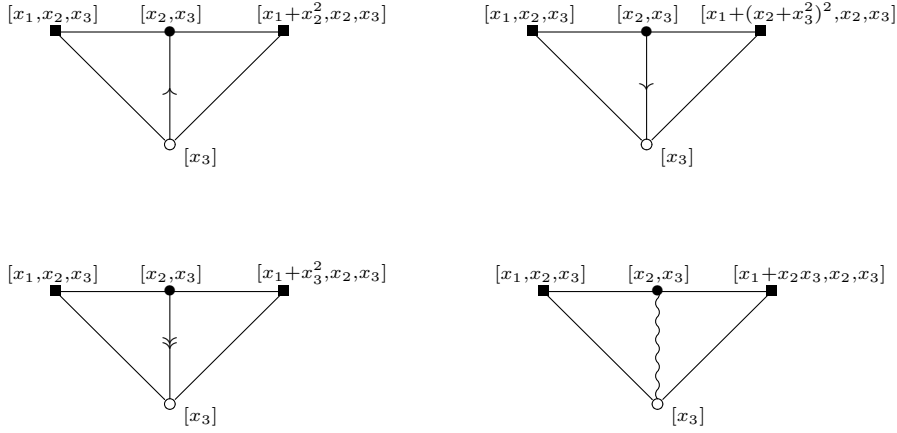
\begin{figure}
$$
\xymatrix{
\mygraph{
!{<0cm,0cm>;<3cm,0cm>:<0cm,1.5cm>::}
!{(1.5,1)}*-{\typethree}="45"
!{(1,1)}*-{\typetwo}="5"
!{(.5,1)}*-{\typethree}="15"
!{(1,0)}*{\typeone}="0"
"45"-_<(-.3){[x_1+x_2^2,x_2,x_3]}"5"-_<{[x_2,x_3]}_>{[x_1,x_2,x_3]}"15"
"0"-|@{>}"5"
"0"-_<{[x_3]}"45" "0"-"15"
}
&
\mygraph{
!{<0cm,0cm>;<3cm,0cm>:<0cm,1.5cm>::}
!{(1.5,1)}*-{\typethree}="45"
!{(1,1)}*-{\typetwo}="5"
!{(.5,1)}*-{\typethree}="15"
!{(1,0)}*{\typeone}="0"
"45"-_<(-.3){[x_1+(x_2+x_3^2)^2,x_2,x_3]}"5"-_<{[x_2,x_3]}_>{[x_1,x_2,x_3]}"15"
"0"-|@{<}"5"
"0"-_<{[x_3]}"45" "0"-"15"
}
\\
\mygraph{
!{<0cm,0cm>;<3cm,0cm>:<0cm,1.5cm>::}
!{(1.5,1)}*-{\typethree}="45"
!{(1,1)}*-{\typetwo}="5"
!{(.5,1)}*-{\typethree}="15"
!{(1,0)}*{\typeone}="0"
"45"-_<(-.3){[x_1+x_3^2,x_2,x_3]}"5"-_<{[x_2,x_3]}_>{[x_1,x_2,x_3]}"15"
"0"-|@{<<}"5"
"0"-_<{[x_3]}"45" "0"-"15"
}
&
\mygraph{
!{<0cm,0cm>;<3cm,0cm>:<0cm,1.5cm>::}
!{(1.5,1)}*-{\typethree}="45"
!{(1,1)}*-{\typetwo}="5"
!{(.5,1)}*-{\typethree}="15"
!{(1,0)}*{\typeone}="0"
"45"-_<(-.3){[x_1+x_2x_3,x_2,x_3]}"5"-_<{[x_2,x_3]}_>{[x_1,x_2,x_3]}"15"
"0"-@{~}"5"
"0"-_<{[x_3]}"45" "0"-"15"
}
}
$$
\caption{Examples of arrows} \label{fig:arrows}
\end{figure}

\begin{lemma}
\label{lem:inarrow}
Let $T = v_1, v_2, v_3$ and $T' = v_1, v_2, v_3'$ be two triangles in $\D$
adjacent
along an edge of type $(1,2)$. Consider the map $\sigma\colon
\LL(v_1) \to \T(v_1)$ defined in Section~\ref{sec:trees}.
Then there is a terminal arrow from $v_2$ to $v_1$ if and only if $\sigma(v_3)
= \sigma(v_3')$.
\end{lemma}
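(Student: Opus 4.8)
The plan is to normalise the configuration via Lemma~\ref{lem:adjacent} and then chain three equivalences: a description of terminal arrows in terms of the polynomial $P$, a translation of that condition into the shape of $v_3'$, and a direct appeal to Remark~\ref{rem:same sigma}.

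First I would use $\TA$-equivariance of the arrow construction and of $\sigma$, together with Lemma~\ref{lem:adjacent}, to reduce to $v_1=[x_3]$, $v_2=[x_2,x_3]$, $v_3=[x_1,x_2,x_3]$ and $v_3'=[x_1+P(x_2,x_3),x_2,x_3]$ with $P\in\K[x_2,x_3]$ of degree at least $2$; since the claim is symmetric in $v_3$ and $v_3'$ this costs nothing. In this normal form $\sigma$ is exactly the projection $\LL([x_3])\to\T([x_3])$ of Section~\ref{sec:trees}, so Remark~\ref{rem:same sigma} applies: $\sigma(v_3)=\sigma(v_3')$ if and only if $v_3'=[x_1+t(x_3),\,x_2+t'(x_3),\,x_3]$ for some $t,t'\in\K[x_3]$.

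Next I would prove that $e$ carries a terminal arrow (necessarily directed from $v_2$ to $v_1$) if and only if $P=\lambda x_2+A_0(x_3)$ for some $\lambda\in\K$ and $A_0\in\K[x_3]$, in which case $\deg A_0=\deg P\geq 2$. One direction is just the definition of a \emph{terminal} arrow together with $\deg P\geq 2$. For the converse, such a $P$ is realised as $A(f)$ by taking $A=\mathrm{id}$ and $f=\lambda x_2+A_0(x_3)$ when $\lambda\neq0$, or $A=A_0$ and $f=x_3$ when $\lambda=0$; in either case $(f,x_3)$ is a composition of a linear map with an elementary automorphism of $\A^2$, so $[f]\in\T([x_2,x_3])$ and $e$ is oriented. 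The arrow is then terminal by definition, and since $\deg A_0>1$ the top-degree part of $P$ is a nonzero multiple of $x_3^{\deg A_0}$, so the arrow points to the component of $\T([x_2,x_3])-e$ containing $[x_3]$, that is, from $v_2$ to $v_1$.

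Finally I would verify by a short affine-group computation that $v_3'=[x_1+P(x_2,x_3),x_2,x_3]$ equals $[x_1+t(x_3),\,x_2+t'(x_3),\,x_3]$ for suitable $t,t'\in\K[x_3]$ exactly when $P=\lambda x_2+A_0(x_3)$: an element of $A_3$ carrying the tuple $(x_1+P,x_2,x_3)$ to one of that shape must fix $x_3$ and have upper-unitriangular linear part, and then comparing the first two coordinates forces the $x_2$-dependence of $P$ to be at most linear, the converse being immediate by subtracting off that linear term. Chaining this with the two previous equivalences gives the lemma. I expect the only delicate point to be bookkeeping: ``terminal'' is recorded as a condition on $P$ while the direction of the arrow is recorded via the top-degree term of $P$, and one must observe that, thanks to $\deg P\geq 2$, these match automatically, so that ``terminal arrow from $v_2$ to $v_1$'' coincides with ``terminal arrow''.
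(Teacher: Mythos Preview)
Your proposal is correct and follows essentially the same approach as the paper's proof: normalise via Lemma~\ref{lem:adjacent}, then use Remark~\ref{rem:same sigma} to translate $\sigma(v_3)=\sigma(v_3')$ into the condition that $P\in\K[x_3]$ up to a linear term in $x_2$, which is exactly the definition of a terminal arrow. The paper compresses all three of your equivalences into a single sentence, leaving implicit both the affine computation identifying when $[x_1+P,x_2,x_3]=[x_1+t(x_3),x_2+t'(x_3),x_3]$ and the observation that such a $P$ automatically makes $e$ oriented with the arrow directed towards $v_1$; you have simply spelled these out.
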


\begin{proof}
By Lemma~\ref{lem:adjacent}, we can assume $v_1 = [x_3], v_2 = [x_2, x_3], v_3 =
[x_1, x_2, x_3]$, and $v_3' = [x_1 + P(x_2, x_3), x_2, x_3]$.
By Remark \ref{rem:same sigma},
$\sigma(v_3) = \sigma(v_3')$ means that $P \in \K[x_3]$ (up to a linear term in
$x_2$), which is precisely the definition of a terminal arrow.
\end{proof}

\subsection{Curvature}

In this subsection we define the \emph{curvature} at each vertex of a reduced
disc diagram $\D$ in $\Comp$.

First we consider an interior vertex $v_i$ of $\D$.
By $\deg v_i$ we denote its valence in~$\D$, and by $\out v_i $ and $\inn v_i $
the number of outgoing and incoming arrows (a terminal arrow is only counted
once).
The definition of the curvature $K(v_i)$ depends on the type~$i$ of the vertex:

\begin{align*}
K(v_1) &= \frac{1}{12}(12 - \deg v_1) + \frac{\out v_1 - \inn v_1 }{6} ,\\
K(v_2) &= \frac{1}{4}(4 - \deg v_2) + \frac{\out v_2  - \inn v_2 }{6}, \\
K(v_3) &= \frac{1}{6}(6 - \deg v_3).
\end{align*}

One should imagine the triangles of $\Comp$ being equipped with angles $\frac{\pi}{6},\frac{\pi}{2},\frac{\pi}{3}$ with a correction coming from the arrows.

\begin{rem}
\label{rem:negativeminus1over6} The valence of $v_i$ is always even.
Hence if $K(v_i)$ is negative, then in fact $K(v_i) \leq -\frac{1}{6}$.
\end{rem}

Similarly we define the boundary curvature $K_\partial(v_i)$ of a boundary vertex $v_i$:

\begin{align*}
K_\partial(v_1) &= \frac{1}{12}(7 - \deg v_1) + \frac{\out v_1  - \inn v_1 }{6}, \\
K_\partial(v_2) &= \frac{1}{4}(3 - \deg v_2) + \frac{\out v_2  - \inn v_2 }{6}, \\
K_\partial(v_3) &= \frac{1}{6}(4 - \deg v_3).
\end{align*}

\begin{rem}
\label{rem:5over6}
For $i=2,3$, the boundary curvature $K_\partial(v_i)$ is maximal when $v_i$ has valence 2, giving $K_\partial(v_i)\leq \frac{1}{3}$. For $i=1$, note that only every other additional edge might have an outgoing arrow. Hence the contribution $\frac{1}{6}$ from the $\out$ term of such an edge is cancelled with the contribution $-2\frac{1}{12}$ from the $\deg$ term. Thus $K_\partial(v_1)$ is maximal when $\deg v_1=2k+1$ and $\out v_1 =k$ giving $K_\partial(v_1) \leq \frac{1}{2}$.
\end{rem}

As in classical small cancellation (see \cite[Thm V.3.1]{LS}), we obtain the following.

\begin{prop}(Combinatorial Gauss--Bonnet)
\label{prop:GB}
For any reduced disc diagram~$\D$ in~$\Comp$, the sum of the curvatures and boundary
curvatures of its vertices equals~$1$. Similarly, for any reduced sphere diagram in
$\Comp$, the sum of the curvatures of its vertices equals~$2$.
\end{prop}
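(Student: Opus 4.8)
The plan is to verify the Combinatorial Gauss--Bonnet identity by summing the local curvature definitions and showing that all the ``arrow'' contributions cancel globally, leaving only a purely combinatorial count that reduces to the Euler characteristic. Write $V=V_1+V_2+V_3$ for the vertices of $\D$ partitioned by type, with $V_i=V_i^{\inn}\sqcup V_i^{\partial}$ split into interior and boundary vertices; let $E$ be the set of edges and $F$ the set of triangles. I would first observe that $\sum_{v}(\out v-\inn v)$, summed over all vertices of type $1$ and $2$ (type $3$ vertices carry no arrow term), equals zero: each oriented edge of type $(1,2)$ contributes $+\tfrac16$ at its tail and $-\tfrac16$ at its head, and since every oriented edge joins a type $1$ vertex to a type $2$ vertex, the tail and head are always both counted in the sum. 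Hence the total contribution of all arrow terms to $\sum_v K(v)+\sum_v K_\partial(v)$ is $0$, and it remains to evaluate the ``angle'' part.

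Next I would rewrite the angle parts so they look like a genuine Gauss--Bonnet sum. Each triangle $T$ of $\D$ has one vertex of each type (this is forced: a $2$-simplex of $\Comp$ is $[f_1],[f_1,f_2],[f_1,f_2,f_3]$), and I assign it angles $\tfrac{\pi}{6}$ at its type $1$ corner, $\tfrac{\pi}{2}$ at its type $2$ corner, $\tfrac{\pi}{3}$ at its type $3$ corner; normalizing by dividing by $2\pi$, these are $\tfrac1{12},\tfrac14,\tfrac16$. For an interior vertex $v_i$ the number of triangles around it equals $\deg v_i$ (the link is a cycle), so the total normalized angle at $v_i$ is $\tfrac{\deg v_i}{12},\tfrac{\deg v_i}{4},\tfrac{\deg v_i}{6}$ for $i=1,2,3$; the ``full angle'' $1$ minus this is exactly the stated $K(v_i)$ minus the arrow term, since $\tfrac1{12}\cdot12=1$, $\tfrac14\cdot4=1$, $\tfrac16\cdot6=1$. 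For a boundary vertex the number of triangles around it is $\deg v_i-1$, and the target ``flat'' normalized angle is $\tfrac12$; here $\tfrac1{12}(7-\deg v_1)=\tfrac12-\tfrac{\deg v_1-1}{12}$, $\tfrac14(3-\deg v_2)=\tfrac12-\tfrac{\deg v_2-1}{4}$, $\tfrac16(4-\deg v_3)=\tfrac12-\tfrac{\deg v_3-1}{6}$, matching the formulas for $K_\partial$. So, modulo the arrow terms that we already know sum to zero,
$$\sum_v K(v)+\sum_v K_\partial(v)=\sum_{v\ \mathrm{interior}}\Big(1-\alpha(v)\Big)+\sum_{v\in\partial\D}\Big(\tfrac12-\alpha(v)\Big),$$
where $\alpha(v)$ is the sum over triangles $T\ni v$ of the normalized angle of $T$ at $v$.

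Now I would finish with a bookkeeping argument. Summing $\alpha(v)$ over all vertices counts, for each triangle $T$, its three normalized corner angles, which add to $\tfrac1{12}+\tfrac14+\tfrac16=\tfrac12$; hence $\sum_v\alpha(v)=\tfrac{F}{2}$. Therefore the right-hand side equals $|V^{\inn}|+\tfrac12|V^\partial|-\tfrac F2$. Using that $\D$ is a triangulated disc, standard edge-counting ($3F=2E^{\inn}+E^\partial$ where $E^\partial$ is the boundary edge count, $E^\partial=|V^\partial|$, and $E=E^{\inn}+E^\partial$) together with $\chi(\D)=V-E+F=1$ gives $|V^{\inn}|+\tfrac12|V^\partial|-\tfrac F2=\chi(\D)=1$; for a triangulated sphere the same computation with $\chi=2$ and no boundary gives $2$. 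The one genuine subtlety — and the step I expect to need the most care — is the claim that the arrow terms cancel: I must make sure that in a reduced disc diagram every edge of type $(1,2)$ that carries an arrow really does so ``from the diagram's point of view'' consistently, i.e.\ that the orientation is well-defined (independent of the representation $P=A(f)$, already noted in the Remark, at least for the leading term) and that a terminal (double) arrow, counted once, contributes $+\tfrac16$ at the type $2$ end and $-\tfrac16$ at the type $1$ end exactly as a single arrow does; once this is pinned down, the cancellation is immediate because each such edge has exactly one type $1$ and one type $2$ endpoint. The rest is the purely formal Euler-characteristic computation identical to the classical small cancellation argument of \cite[Thm V.3.1]{LS}.
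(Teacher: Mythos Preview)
Your proof is correct and follows essentially the same approach as the paper: first observe that the $\out/\inn$ contributions cancel in pairs along each oriented edge, then reduce the remaining angle terms to an Euler-characteristic computation. The only cosmetic difference is that the paper groups the count by vertex type (rewriting $\chi$ as a sum of three expressions, one per type), whereas you use the angle-sum $\sum_v\alpha(v)=\tfrac{F}{2}$ directly; your worry about terminal arrows is unnecessary, since by definition they are counted once and thus contribute exactly like ordinary arrows.
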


\begin{proof}
First of all, since the out and the in terms cancel out in doing the sum, it suffices to prove the theorem when in the definition of the curvature these terms are omitted.
Denote by $w_i,e_{ij},f$ the numbers of vertices of type $i$, edges of type
$(i,j)$, and faces in $\D$. The Euler characteristic of $\D$ can be expressed as
\begin{multline*}
1 = v_1+v_2+v_3-e_{12}-e_{13}-e_{23}+f\\
= \Big(v_1-\frac{6(e_{12}+e_{13})}{12}+\frac{5f}{12}\Big)+\Big(v_2-\frac{2(e_{12}+e_{23})}{4}+\frac{f}{4}\Big)+
\Big(v_3-\frac{3(e_{13}+e_{23})}{6}+\frac{2f}{6}\Big).
\end{multline*}
We claim that each of these terms can be recognised as the sum of the curvatures
and boundary curvatures at the vertices of given type. Indeed, let $V_1$ be the
set of all vertices of type $1$ in $\D^0$. For each $v\in V_1$, let $e^v,f^v$ be
the number of edges and faces incident to $v$. If $v$ is interior, then
$K(v)=1-\frac{\deg v}{12}$ after ignoring the out and the in terms. Thus
$$K(v)=1-\frac{6\deg v}{12}+\frac{5\deg
v}{12}=1-\frac{6e^v}{12}+\frac{5f^v}{12}.$$
If $v\in \partial \D$, we have $K_\partial(v)=1-\frac{\deg v}{12}-\frac{5}{12}$,
and thus we obtain the same formula
$$K_\partial(v)=1-\frac{6\deg v}{12}+\frac{5(\deg v-1)}{12}=1-\frac{6e^v}{12}+\frac{5f^v}{12}.$$
Moreover, we have $\sum_{v\in V_1}e^v=e_{12}+e_{13}$ and $\sum_{v\in V_1}f^v=f$. This proves the claim for the vertices of type $1$. The other types are dealt with similarly. The claim immediately implies the proposition. The sphere case follows in the same way.
\end{proof}

\section{Nonpositive curvature}
\label{sec:nonpositive}

In this section we study the curvature at interior vertices of reduced disc diagrams in $\Comp$, and in particular we prove  Theorem~\ref{thm:contractible}.

\begin{prop}
\label{prop:nonpositivecurvature}
The curvature at an interior vertex of a reduced disc diagram is nonpositive, and equal to zero only in one of the eight
situations in Figure~\ref{fig:8cases}.
\end{prop}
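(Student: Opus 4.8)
\emph{Set-up.}
The plan is to argue vertex by vertex. Fix a reduced disc diagram $\D\to\Comp$ and an interior vertex $v$. Its link in $\D$ is a circle, and the combinatorial map $\D\to\Comp$ sends it to a closed combinatorial path $\ell$ in $\LL(v)$. Since $\D$ is reduced, $\ell$ is non-backtracking, and if moreover $v$ has type $1$ then $\ell$ is an \emph{embedded} cycle in $\LL(v)$. I will handle the three types separately, type~$1$ being by far the hardest.

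\emph{Types $3$ and $2$.}
By Section~\ref{sec:trees}, $\LL(v_3)$ is the incidence graph of $\p^2_\K$, which is bipartite of girth $6$; hence $\deg v_3=\mathrm{length}(\ell)\ge 6$ and $K(v_3)=\tfrac16(6-\deg v_3)\le 0$, with equality exactly when $\ell$ is an embedded hexagon, i.e.\ $v_3$ is surrounded by six triangles realising a ``triangle'' of $\p^2_\K$: one of the eight situations. For type $2$, $\LL(v_2)$ is complete bipartite, hence of girth $4$, so $\deg v_2\ge 4$; moreover the link cycle alternates between the $(1,2)$- and the $(2,3)$-edges at $v_2$, so half of the $\deg v_2$ edges at $v_2$ carry a possible arrow and $\out v_2-\inn v_2\le\tfrac12\deg v_2$. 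When $\deg v_2\ge 6$ this already gives $K(v_2)\le 1-\tfrac16\deg v_2\le 0$, and the sole borderline case $\deg v_2=6$ with all three $(1,2)$-edges pointing out of $v_2$ is disposed of (or recognised among the eight) by a direct computation. When $\deg v_2=4$ there are exactly two $(1,2)$-edges $e,e'$ at $v_2$, and the two triangles along $e$ and the two along $e'$ have the \emph{same} pair of type~$3$ vertices; normalising $e$ as in Lemma~\ref{lem:adjacent} with $v_2=[x_2,x_3]$ and common polynomial $P$, one checks the polynomial attached to $e'$ is $P\circ\ell$ for a linear automorphism $\ell$ of $\K^2$ that does not preserve the $x_3$-direction. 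Comparing leading terms via Lemma~\ref{lem:leading terms}(1) then shows that if $e$ points out of $v_2$ then $e'$ points into $v_2$; hence $\out v_2\le\inn v_2$, so $K(v_2)\le 0$, with equality only when $\deg v_2=4$ and the two edges are either both nonoriented or one-in-one-out — again among the eight.

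\emph{Type $1$.}
Writing $m$ for the number of $(1,2)$-edges at $v_1$ (so $\deg v_1=2m$ by Remark~\ref{rem:negativeminus1over6}) and $N,I$ for the numbers of nonoriented, resp.\ incoming, $(1,2)$-edges at $v_1$, one has $m=N+I+\out v_1$, hence
$$K(v_1)=1-\frac{N+2I}{6}.$$
So $K(v_1)\le 0$ is \emph{equivalent} to $N+2I\ge 6$, and $K(v_1)=0$ forces $(N,I)\in\{(6,0),(4,1),(2,2),(0,3)\}$; everything reduces to the inequality $N+2I\ge 6$. To prove it, I would push the embedded cycle $\ell\subset\LL(v_1)$ forward by the simplicial projections $\LL(v_1)\to\T(v_1)\to\T_{\K(x_3)}$ of Section~\ref{sec:trees}. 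Since these targets are trees, the images of $\ell$ are closed walks that must backtrack, and $N$ and $I$ are precisely what records these backtracks: by Lemma~\ref{lem:inarrow} a backtrack of $\sigma(\ell)$ at the image of a type~$2$ vertex is the same as a terminal incoming arrow at $v_1$, while the remaining backtracks — those surviving in $\T_{\K(x_3)}$ but not in $\T(v_1)$, and those at images of type~$3$ vertices — are accounted for by nonoriented edges and non-terminal incoming arrows, using the explicit amalgam descriptions of $\Stab(v_1)$ from Lemmas~\ref{lem:lem1}--\ref{lem:lem3}, Proposition~\ref{prop:maintree} and Remark~\ref{rem:same sigma}, together with the leading-term Lemma~\ref{lem:leading terms}. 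The numerical bound $N+2I\ge 6$ comes from the fact that $\ell$ is embedded in $\LL(v_1)$: it cannot be a ``simple back-and-forth'' in either tree, so the combined backtracking is large enough, and a finite case analysis over the possible shapes of $\sigma(\ell)$ and $\pi(\ell)$ pins down exactly which embedded cycles give $N+2I=6$, producing the remaining situations of Figure~\ref{fig:8cases}.

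\emph{Main obstacle.}
The genuinely hard step is the type~$1$ inequality $N+2I\ge 6$. For types $2$ and $3$ a single girth bound (plus a short leading-term computation for type $2$) suffices, but $\LL(v_1)$ does have short cycles — which is exactly why the arrows were introduced — and converting the bookkeeping of arrows into the clean bound $N+2I\ge 6$ requires handling the two tree projections simultaneously, controlling how a cycle embedded in $\LL(v_1)$ can fold onto $\T(v_1)$ and $\T_{\K(x_3)}$, and then enumerating the borderline configurations.
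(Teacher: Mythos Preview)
Your treatment of types $2$ and $3$ is essentially what the paper does (for type~$2$, degree~$6$, note that the three-outgoing configuration must be \emph{excluded}, not placed among the eight; this is the content of Lemma~\ref{lem:type2weird}). The type~$1$ case, however, has a genuine gap.

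You correctly reduce to proving $N+2I\ge 6$ and correctly observe, via Lemma~\ref{lem:inarrow}, that a backtrack of $\sigma(\ell)$ at a type~$2$ vertex is exactly a terminal incoming arrow. But two of your subsequent claims are wrong. First, both $\sigma$ and $\pi$ are local embeddings at type~$3$ vertices of $\LL(v_1)$, so there are \emph{no} backtracks at images of type~$3$ vertices; your category of ``remaining backtracks'' does not exist, and bringing in $\pi$ alongside $\sigma$ buys nothing. Second, it is simply false that an embedded cycle in $\LL(v_1)$ ``cannot be a simple back-and-forth'' in $\T(v_1)$: the case where $\sigma(\ell)$ has exactly two backtracks (so its image is an embedded path) is perfectly possible, and is in fact the main case. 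In that situation you only get $I\ge 2$ from the backtracks, hence $N+2I\ge 4$, and nothing in your outline prevents the two remaining $(1,2)$-edges from both carrying outgoing arrows, which would give $N+2I=4$ and positive curvature.

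What is missing is an additional algebraic constraint ruling out certain pairs of \emph{outgoing} arrows. In the paper this is Lemma~\ref{lem:impossible}: part~(b) says that two disjoint length~$2$ paths in $\ell$ with the same $\sigma$-image cannot both carry outgoing arrows, and part~(a) upgrades this when $n=4$ to say that an outgoing arrow on one side forces an incoming arrow on the opposite side. With this lemma in hand the proof goes: if $\sigma(\ell)$ has $\ge 3$ backtracks one is done; otherwise $\sigma(\ell)$ is a path of length $n$, Lemma~\ref{lem:impossible}(b) forces $n\le 6$ once $N\le 2$, and then Lemma~\ref{lem:impossible}(a) handles $n=4$. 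The equality cases (d)--(h) fall out of this dichotomy. Your proposal does not isolate this lemma, and the vague appeal to ``combined backtracking'' in two trees does not substitute for it.
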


An arrow shown standard in Figure~\ref{fig:8cases} is allowed to be terminal,
and the edges on the boundary of the diagrams in Figure~\ref{fig:8cases}(a)
might or might not carry arrows.

\begin{rem}
\label{rem:outgoing}
In each of the diagrams in Figure~\ref{fig:8cases}(d,e,f,g,h), given any 3
consecutive edges of type $(1,2)$ from the central vertex, at least one carries
an incoming arrow.

\end{rem}

\begin{rem}\label{rem:outgoingbis}
Among the diagrams in Figure~\ref{fig:8cases}(d,e,f,g,h), only in cases (g,h) there are two edges from the central vertex carrying
outgoing arrows, and their distance in its link is 2 or 6 (and not 4).
\end{rem}

\begin{figure}[t]
$$
\xymatrix@R-1pc@C-1pc{
(a) &
\mygraph{
!{<0cm,0cm>;<1cm,0cm>:<0cm,.8cm>::}
!{(-1,1)}*{\typeone}="NW"
!{(0,1)}*-{\typetwo}="N"
!{(1,1)}*{\typeone}="NE"
!{(-.5,0)}*-{\typetwo}="W"
!{(0,0.33)}*-{\typethree}="O"
!{(.5,0)}*-{\typetwo}="E"
!{(0,-1)}*{\typeone}="S"
"NW"-"N"-"NE"-"E"-"S"-"W"-"NW"
"NW"-"O" "NE"-"O" "S"-"O"
"N"-"O" "W"-"O"-"E"
}
& \qquad (e)  &
\mygraph{
!{<0cm,0cm>;<.8cm,0cm>:<0cm,.8cm>::}
!{(-1,1)}*-{\typetwo}="NW"
!{(0,1)}*-{\typethree}="N"
!{(1,1)}*-{\typetwo}="NE"
!{(-1,0)}*-{\typethree}="W"
!{(0,0)}*{\typeone}="O"
!{(1,0)}*-{\typethree}="E"
!{(-1,-1)}*-{\typetwo}="SW"
!{(0,-1)}*-{\typethree}="S"
!{(1,-1)}*-{\typetwo}="SE"
"NW"-"N"-"NE"-"E"-"SE"-"S"-"SW"-"W"-"NW"
"NW"-|@{>>}"O" "SE"-|@{>>}"O"
"NE"-@{~}"O" "SW"-@{~}"O"
"N"-"O"-"S" "W"-"O"-"E"
}
\\
(b) &
\mygraph{
!{<0cm,0cm>;<1cm,0cm>:<0cm,.8cm>::}
!{(0,1)}*{\typeone}="N"
!{(-1,0)}*-{\typethree}="W"
!{(0,0)}*-{\typetwo}="O"
!{(1,0)}*-{\typethree}="E"
!{(0,-1)}*{\typeone}="S"
"N"-"E"-"S"-"W"-"N"
"N"-@{~}"O"-@{~}"S" "W"-"O"-"E"
}
& \qquad (f) &
\mygraph{
!{<0cm,0cm>;<.8cm,0cm>:<0cm,.8cm>::}
!{(-1,1)}*-{\typetwo}="NW"
!{(0,1)}*-{\typethree}="N"
!{(1,1)}*-{\typetwo}="NE"
!{(-1,0)}*-{\typethree}="W"
!{(0,0)}*{\typeone}="O"
!{(1,0)}*-{\typethree}="E"
!{(-1,-1)}*-{\typetwo}="SW"
!{(0,-1)}*-{\typethree}="S"
!{(1,-1)}*-{\typetwo}="SE"
"NW"-"N"-"NE"-"E"-"SE"-"S"-"SW"-"W"-"NW"
"NW"-|@{>>}"O" "SE"-|@{>>}"O"
"NE"-|@{<}"O"  "SW"-|@{>}"O"
"N"-"O"-"S" "W"-"O"-"E"
}\\
(c) &
\mygraph{
!{<0cm,0cm>;<1cm,0cm>:<0cm,.8cm>::}
!{(0,1)}*{\typeone}="N"
!{(-1,0)}*-{\typethree}="W"
!{(0,0)}*-{\typetwo}="O"
!{(1,0)}*-{\typethree}="E"
!{(0,-1)}*{\typeone}="S"
"N"-"E"-"S"-"W"-"N"
"N"-|@{<}"O"-|@{<}"S" "W"-"O"-"E"
}& \qquad (g) &
\mygraph{
!{<0cm,0cm>;<.8cm,0cm>:<0cm,.8cm>::}
!{(-1.1,.5)}*-{\typethree}="NWW"
!{(-.7,1)}*-{\typetwo}="NW"
!{(0,1)}*-{\typethree}="N"
!{(.7,1)}*-{\typetwo}="NE"
!{(1.1,.5)}*-{\typethree}="NEE"
!{(-1.5,0)}*-{\typetwo}="W"
!{(0,0)}*{\typeone}="O"
!{(1.5,0)}*-{\typetwo}="E"
!{(-1.1,-.5)}*-{\typethree}="SWW"
!{(-.7,-1)}*-{\typetwo}="SW"
!{(0,-1)}*-{\typethree}="S"
!{(.7,-1)}*-{\typetwo}="SE"
!{(1.1,-.5)}*-{\typethree}="SEE"
"NW"-"N"-"NE"-"E"-"SE"-"S"-"SW"-"W"-"NW"
"NW"-|@{<}"O" "SE"-@{~}"O"
"NE"-|@{<}"O"  "SW"-@{~}"O"
"N"-"O"-"S" "W"-|@{>>}"O"-|@{<<}"E"
"NWW"-"O"-"SWW" "NEE"-"O"-"SEE"
}\\
(d) &
\mygraph{
!{<0cm,0cm>;<1cm,0cm>:<0cm,.8cm>::}
!{(-1,1)}*-{\typetwo}="NW"
!{(0,1)}*-{\typethree}="N"
!{(1,1)}*-{\typetwo}="NE"
!{(-.5,0)}*-{\typethree}="W"
!{(0,0.33)}*{\typeone}="O"
!{(.5,0)}*-{\typethree}="E"
!{(0,-1)}*{\typetwo}="S"
"NW"-"N"-"NE"-"E"-"S"-"W"-"NW"
"NW"-|@{>>}"O" "NE"-|@{>>}"O" "S"-|@{>>}"O"
"N"-"O" "W"-"O"-"E"
}& \qquad (h) &
\mygraph{
!{<0cm,0cm>;<.8cm,0cm>:<0cm,.8cm>::}
!{(-1.1,.5)}*-{\typethree}="NWW"
!{(-.7,1)}*-{\typetwo}="NW"
!{(0,1)}*-{\typethree}="N"
!{(.7,1)}*-{\typetwo}="NE"
!{(1.1,.5)}*-{\typethree}="NEE"
!{(-1.5,0)}*-{\typetwo}="W"
!{(0,0)}*{\typeone}="O"
!{(1.5,0)}*-{\typetwo}="E"
!{(-1.1,-.5)}*-{\typethree}="SWW"
!{(-.7,-1)}*-{\typetwo}="SW"
!{(0,-1)}*-{\typethree}="S"
!{(.7,-1)}*-{\typetwo}="SE"
!{(1.1,-.5)}*-{\typethree}="SEE"
"NW"-"N"-"NE"-"E"-"SE"-"S"-"SW"-"W"-"NW"
"NW"-|@{<}"O" "SE"-|@{<}"O"
"NE"-@{~}"O"  "SW"-@{~}"O"
"N"-"O"-"S" "W"-|@{>>}"O"-|@{<<}"E"
"NWW"-"O"-"SWW" "NEE"-"O"-"SEE"
}
}
$$
\caption{The eight cases of Proposition \ref{prop:nonpositivecurvature}} \label{fig:8cases}
\end{figure}
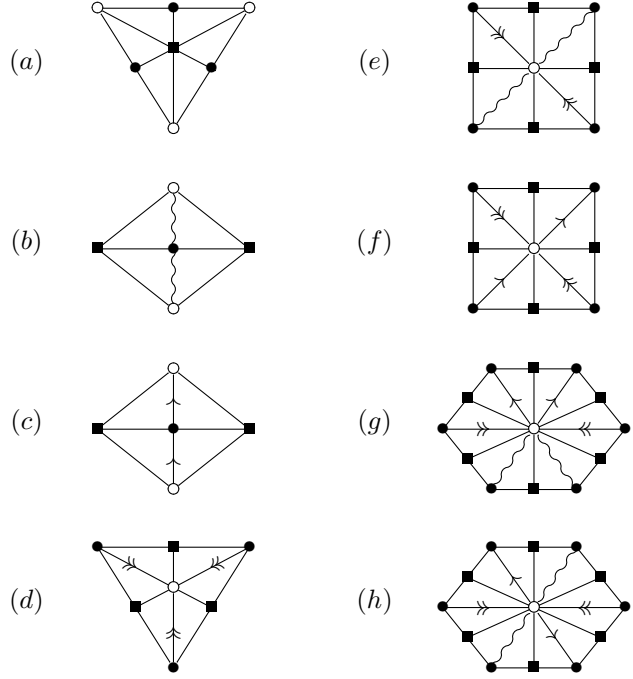

To prove Proposition~\ref{prop:nonpositivecurvature} we need the following preparatory lemmas.

\begin{lemma}
\label{lem:type2weird}
There is no reduced disc diagram as in Figure~\ref{fig:type2}(a).
\end{lemma}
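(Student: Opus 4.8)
The plan is to argue by contradiction, reducing the statement to an impossibility about leading monomials in the tree $\T(v_2)\cong\T_\K$. The configuration of Figure~\ref{fig:type2}(a) should be local around a type~$2$ vertex $v_2$ of valence $4$ whose two type $(1,2)$ edges both carry an arrow pointing \emph{away} from $v_2$; such a vertex would have $K(v_2)=\frac14(4-4)+\frac{2-0}{6}=\frac13>0$, so it must be ruled out to establish nonpositivity of curvature. Suppose a reduced disc diagram containing such a vertex exists. First I would normalize: by $\TA$-equivariance and Lemma~\ref{lem:adjacent}, send $v_2$ to $[x_2,x_3]$ and one of the incident triangles to $[x_1,x_2,x_3],[x_2,x_3],[x_3]$. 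Using the structure of $\Stab([x_2,x_3])$ and its projective linear action on the $\p^1$ of type~$1$ vertices of $\LL(v_2)$, the stabilizer of the chosen triangle still acts transitively on the type~$1$ vertices of $\LL(v_2)$ other than $[x_3]$, so I may also assume the remaining type~$1$ vertex of the figure is $[x_2]$; since the second type~$3$ vertex is adjacent in $\LL([x_2,x_3])$ to both $[x_3]$ and $[x_2]$, the Jacobian constraint (its first component being linear in $x_1$ with unit coefficient) forces it to be $[x_1+P(x_2,x_3),x_2,x_3]$ with $\deg P\ge2$, as in Lemma~\ref{lem:adjacent}.

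Next I would read off the arrows using the definition of orientation and Lemma~\ref{lem:leading terms}(1): the leading monomial of $P$ is $c(ax_2+bx_3)^d$ with $c\in\K^*$ and $d\ge2$, and the arrow on the edge joining $v_2$ and $[x_3]$ points away from $v_2$ precisely when $(a:b)=(0:1)$, that is, when this leading monomial is a scalar multiple of $x_3^d$. To put the edge joining $v_2$ and $[x_2]$ into the same normal form, I would apply the affine involution $\psi\colon(x_1,x_2,x_3)\mapsto(x_1,x_3,x_2)$, which fixes $[x_2,x_3]$ and $[x_1,x_2,x_3]$ and sends $[x_2]$ to $[x_3]$; it replaces $P$ by $P(x_3,x_2)$, whose leading monomial is $c(bx_2+ax_3)^d$, so the arrow on that edge points away from $v_2$ precisely when the leading monomial of $P$ is a scalar multiple of $x_2^d$. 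Since Figure~\ref{fig:type2}(a) forces both arrows to point away from $v_2$, the monomial $c(ax_2+bx_3)^d$ would have to be proportional both to $x_2^d$ and to $x_3^d$ with $d\ge2$, which is absurd; this contradiction proves the lemma.

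The step I expect to need the most care is the double normalization: checking that the second type~$1$ neighbour of $v_2$ can be moved to $[x_2]$ without disturbing the first triangle, and that a vertex $[h,x_2,x_3]\in\LL([x_2,x_3])$ adjacent to both $[x_3]$ and $[x_2]$ must have the form $[x_1+P,x_2,x_3]$ with $\deg P\ge2$. The arrow conventions are the other place to be careful, but terminal arrows, if present in Figure~\ref{fig:type2}(a), create no difficulty: terminality of the arrow joining $v_2$ and $[x_3]$ already forces the leading monomial of $P$ to be a power of $x_3$, which is the only fact used, and I do not expect the type $(1,3)$ edges of the diagram to enter the argument.
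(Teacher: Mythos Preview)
You have misread the configuration of Figure~\ref{fig:type2}(a). The central type~$2$ vertex there has valence~$6$, not~$4$: it has \emph{three} type~$1$ neighbours and \emph{three} type~$3$ neighbours, with all three type~$(1,2)$ edges carrying arrows outgoing from~$v_2$. (This is precisely the case excluded in the proof of Proposition~\ref{prop:nonpositivecurvature} when $\deg v_2=2n=6$ and $\out v_2=3$, giving $K(v_2)=\tfrac14(4-6)+\tfrac{3}{6}=0$; one must show the curvature is strictly negative.) The valence-$4$ configuration with two outgoing arrows that you describe is Figure~\ref{fig:type2}(b), which is covered by Lemma~\ref{lem:type2strange}, not Lemma~\ref{lem:type2weird}.

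Consequently your argument does not address the lemma. With three type~$3$ vertices around $v_2=[x_2,x_3]$, one cannot encode everything in a single polynomial~$P$; instead, after normalising the three type~$1$ neighbours to $[x_2],[x_3],[x_2+x_3]$ and one type~$3$ neighbour to $[x_1,x_2,x_3]$, the other two type~$3$ vertices take the form $[x_1+A(f),x_2,x_3]$ and $[x_1-B(g),x_2,x_3]$, and closing up the hexagon forces a relation $A(f)+B(g)=C(h)$ with $f,g,h$ components in $\K[x_2,x_3]$. The three outgoing arrows then pin the leading monomials of $A(f),B(g),C(h)$ to powers of $x_2,\,x_3,\,x_2+x_3$ respectively (each of degree $\ge 2$), and one checks that no such relation can hold. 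Your involution trick with $\psi\colon(x_1,x_2,x_3)\mapsto(x_1,x_3,x_2)$ does not extend here, since there are three directions rather than two; the core of the proof is this three-term incompatibility, which your proposal is missing.
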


\begin{proof}
By contradiction, assume that such a diagram exists.
Using the action of $\TA$ we can assume that the central vertex of the diagram is $[x_2, x_3]$, the three type 1 vertices are $[x_2]$, $[x_3]$ and $[x_2 + x_3]$, and one of the type 3 vertices is $[x_1, x_2, x_3]$.
Then the arrows imply that the remaining type 3 vertices have the form $[x_1+A(f), x_2, x_3]$ and $[x_1-B(g), x_2, x_3]$, with a relation of the form
\begin{equation}\label{eq:ABC}
A(f) + B(g) = C(h),
\end{equation}
for some components $f, g,h \in \K[x_2, x_3]$ and some polynomials $A, B,C \in \K[X]$.
%
By Lemma \ref{lem:leading terms}(1), we know that the leading monomials of $A(f)$, $B(g)$ and $C(h)$ have the form $x_2^a$, $x_3^b$, $(x_2 + x_3)^c$.
Moreover, since the type 3 vertices are distinct, we have $a,b,c \ge 2$.
This is incompatible with relation (\ref{eq:ABC}).
\end{proof}

\begin{lemma}
\label{lem:type2strange}
There are no reduced disc diagrams as in Figure~\ref{fig:type2}(b,c).
\end{lemma}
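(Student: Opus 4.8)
The plan is to imitate the argument of Lemma~\ref{lem:type2weird}, reducing the purported diagrams in Figure~\ref{fig:type2}(b,c) to an incompatible leading-term relation over $\K[x_2,x_3]$. First I would use the transitivity of the $\TA$-action (via Lemma~\ref{lem:adjacent}) to normalise: put the central type~2 vertex at $[x_2,x_3]$, arrange the type~1 vertices around it to be among $[x_2],[x_3],[x_2+x_3]$ (as dictated by the combinatorics of the figure and the fact that the link of a type~2 vertex is a full bipartite graph), and fix one type~3 vertex to be $[x_1,x_2,x_3]$. The remaining type~3 vertices, being connected to the central vertex by oriented $(1,2)$-edges, then acquire the shape $[x_1\pm A_i(f_i),x_2,x_3]$ for suitable components $f_i\in\K[x_2,x_3]$ and univariate $A_i$, where each $f_i$ is (up to affine correction) a power of one of $x_2$, $x_3$, $x_2+x_3$ according to which component of $\T_{[x_2,x_3]}$ minus the edge the arrow points into, exactly as extracted in Lemma~\ref{lem:leading terms}(1).

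Next I would read off, from the cycle of triangles bounding the diagram, the relations among these type~3 vertices: going around the diagram gives one (or several) equations of the form $\sum_i \varepsilon_i A_i(f_i)=0$ in $\K[x_1,x_2,x_3]$, which after cancelling the common $x_1$ reduce to relations purely in $\K[x_2,x_3]$ among the polynomials $A_i(f_i)$. The leading monomials are then of the form $x_2^{a}$, $x_3^{b}$, $(x_2+x_3)^{c}$ with exponents $\ge 2$ (the bound $\ge 2$ coming, as in Lemma~\ref{lem:type2weird}, from the type~3 vertices being pairwise distinct, hence each $A_i$ non-affine and each $f_i$ of degree $\ge 1$). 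I would then argue that no nontrivial $\K$-linear relation among monomials/powers of this shape can hold: distinct pure powers $x_2^a$, $x_3^b$ are linearly independent from each other and from $(x_2+x_3)^c$ for $c\ge 2$ (a power $(x_2+x_3)^c$ always produces a genuine mixed monomial $x_2x_3^{c-1}$ which cannot be cancelled by pure powers of $x_2$ or $x_3$), and more than one $(x_2+x_3)$-power cannot occur since only one component of $\T_{[x_2,x_3]}-e$ lies on the $[x_2+x_3]$ side of a given edge. This forces the top-degree part of the relation to vanish term-by-term, contradicting that all exponents are positive, hence no such diagram exists.

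The one place requiring care — and what I expect to be the main obstacle — is the bookkeeping of \emph{which} configurations Figure~\ref{fig:type2}(b,c) actually depict and hence which leading-term relation is forced: I need to be sure the figures are set up so that the arrows around the central vertex genuinely produce type~3 vertices attached to \emph{different} components of $\T_{[x_2,x_3]}$ (so that the leading monomials involve at least two of the three lines $x_2$, $x_3$, $x_2+x_3$, or the same line but with the incompatibility instead coming from a sign), and that the boundary cycle of the diagram yields a closed relation rather than merely an inequality of vertices. Once the correct relation is written down, the leading-term analysis is the same short argument as in Lemma~\ref{lem:type2weird}; so the real work is a careful case check of the combinatorics of the two figures, possibly splitting into subcases according to whether some of the arrows are terminal (which would further constrain the relevant polynomials to lie in $\K[x_3]$, only making the incompatibility easier to see).
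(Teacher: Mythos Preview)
Your plan misreads the combinatorics of Figure~\ref{fig:type2}(b,c). These are \emph{valence~4} pictures: the central type~2 vertex has exactly two type~1 neighbours (call them $N,S$) and two type~3 neighbours (call them $W,E$), not three of each as in Lemma~\ref{lem:type2weird}. Consequently, walking around the boundary produces no nontrivial relation of the shape $\sum_i \varepsilon_i A_i(f_i)=0$: the only type~3 vertices you ever meet are $W$ and $E$, so the ``relation'' you would write down is tautological. For case~(c) your scheme breaks down even more directly, since one of the two $(1,2)$-edges is \emph{nonoriented} and hence contributes no term $A_i(f_i)$ at all.

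The point you are missing is that both $(1,2)$-edges $v_2N$ and $v_2S$ lie between the \emph{same} pair of triangles (those with type~3 vertices $W$ and $E$). After normalising $v_2=[x_2,x_3]$, $W=[x_1,x_2,x_3]$, $E=[x_1+P(x_2,x_3),x_2,x_3]$, the orientation (or lack thereof) of each of these edges is governed by the single polynomial $P$: the edge is oriented iff $P=A(f)$ for some component $f$, and in that case the arrow points toward the unique neighbour $[ax_2+bx_3]$ of $v_2$ in $\T(v_2)$ determined by the leading term of $P$. Thus either both edges are nonoriented, or both are oriented and both arrows point toward the \emph{same} vertex $[ax_2+bx_3]$. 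Since an outgoing arrow on $v_2v_1$ forces $[ax_2+bx_3]=v_1$, at most one of the two edges can carry an outgoing arrow; this kills~(b). And if one edge is oriented then so is the other, killing~(c). This is the paper's argument, and it is what your leading-term analysis collapses to once you notice that the two edges share the same $P$; the detour through a cycle relation is unnecessary and, as written, does not get off the ground.
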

\begin{proof}
We can suppose that  the central vertex $v_2$ of the diagram is $[x_2,x_3]$ and that its type $3$ neighbours are $[x_1,x_2,x_3]$ and $[x_1+P(x_2,x_3),x_2,x_3]$.
Let $e$ be an edge from~$v_2$ with an outgoing arrow, and let $e'$ be the other type $(1,2)$ edge (possibly also with an arrow).
Hence there is a type 1 vertex $v_1=[f(x_2,x_3)]$ with $P=A(f)$. Both $e$ and $e'$ lie in $\T(v_2)$, and carry arrows pointing towards the same vertex $v_1\in \T(v_2)$, which depends only on $[x_1,x_2,x_3]$ and $[x_1+P(x_2,x_3),x_2,x_3]$.
This excludes the diagrams in Figure~\ref{fig:type2}(b,c).
\end{proof}

\begin{figure}[t]
$$
\xymatrix@R-1pc{
\mygraph{
!{<0cm,0cm>;<1cm,0cm>:<0cm,.8cm>::}
!{(-1,1)}*{\typeone}="NW"
!{(0,1)}*-{\typethree}="N"
!{(1,1)}*{\typeone}="NE"
!{(-.5,0)}*-{\typethree}="W"
!{(0,0.33)}*-{\typetwo}="O"
!{(.5,0)}*-{\typethree}="E"
!{(0,-1)}*{\typeone}="S"
"NW"-"N"-"NE"-"E"-"S"-"W"-"NW"
"NW"-|@{<}"O" "NE"-|@{<}"O" "S"-|@{<}"O"
"N"-"O" "W"-"O"-"E"
}
&
\mygraph{
!{<0cm,0cm>;<1cm,0cm>:<0cm,.8cm>::}
!{(0,1)}*{\typeone}="N"
!{(-1,0)}*-{\typethree}="W"
!{(0,0)}*-{\typetwo}="O"
!{(1,0)}*-{\typethree}="E"
!{(0,-1)}*{\typeone}="S"
"N"-"E"-"S"-"W"-"N"
"N"-|@{<}"O"-|@{>}"S" "W"-"O"-"E"
}
&
\mygraph{
!{<0cm,0cm>;<1cm,0cm>:<0cm,.8cm>::}
!{(0,1)}*{\typeone}="N"
!{(-1,0)}*-{\typethree}="W"
!{(0,0)}*-{\typetwo}="O"
!{(1,0)}*-{\typethree}="E"
!{(0,-1)}*{\typeone}="S"
"N"-"E"-"S"-"W"-"N"
"N"-|@{<}"O"-@{~}"S" "W"-"O"-"E"
}\\
(a) & (b) & (c)
}
$$
\caption{Three impossible cases around a type 2 vertex} \label{fig:type2}
\end{figure}
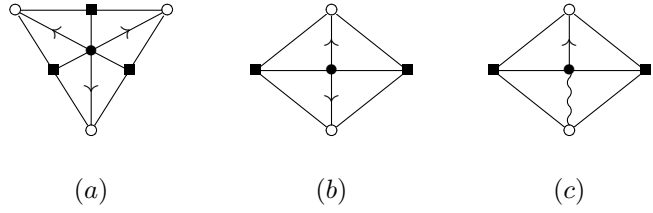

In the following lemma we discuss configurations in the link of a type 1 vertex~$v_1$ with a specified image in $\T(v_1)$ under $\sigma$ from Section~\ref{sec:trees}.

\begin{lemma} \phantomsection\label{lem:impossible}
\begin{enumerate}[(a), wide]
\item In a reduced disc diagram as in Figure~\ref{fig:impossible}(a), if the left
horizontal edge has an arrow outgoing from the central vertex,
then the other horizontal edge has an incoming arrow.
\item
There is no reduced disc diagram with a type 1 vertex $v_1$ that has a pair of
disjoint length 2 paths in the link mapping to the same length 2 path in
$\T(v_1)$, and equipped with a pair of arrows as in
Figure~\ref{fig:impossible}(b).
\end{enumerate}
\end{lemma}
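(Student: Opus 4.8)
The plan is to reduce both statements to the combinatorics of the tree $\T(v_1)$ and the descriptions of $H_1,H_2$ in Lemma~\ref{lem:lem2}, via the leading-term computation of Lemma~\ref{lem:leading terms}. First I would normalise $v_1=[x_3]$ using $\TA$-transitivity, so that $\T(v_1)=\T([x_3])$ is the Bass--Serre tree of Proposition~\ref{prop:maintree} and $\sigma\colon\LL([x_3])\to\T([x_3])$ is the projection there. The key translation is Lemma~\ref{lem:inarrow}: a terminal arrow from a type $2$ vertex $v_2$ to $v_1$ records that the two type $3$ neighbours of $v_2$ have the same image under $\sigma$, i.e.\ differ by $[x_1+t(x_3),x_2+t'(x_3),x_3]$ as in Remark~\ref{rem:same sigma}. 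More generally, a (not necessarily terminal) arrow on a $(1,2)$-edge $e$ between triangles $v_1,v_2,v_3$ and $v_1,v_2,v_3'$ points towards the component of $\T(v_2)-e$ containing the leading-direction vertex of the polynomial $P$ with $v_3'=v_3\cdot$($x_1\mapsto x_1+P$); the leading term of $P$ is well defined by the remark after Lemma~\ref{lem:leading terms}.

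For part (a): after normalising, the central vertex is a type $1$ vertex $v_1=[x_3]$, its two type $2$ neighbours in the configuration are $v_2=[x_2,x_3]$ and $v_2'=[x_2',x_3]$, and the horizontal $(1,2)$-edges carry the arrows in question, while the two type $2$ vertices are also joined (through their common type $3$ neighbours) in a way that makes $\pi(v_2)=\pi(v_2')$ or $\sigma(v_2)=\sigma(v_2')$ the relevant coincidence forced by the picture. The hypothesis that the left horizontal edge has an outgoing arrow means, via Lemma~\ref{lem:leading terms}(2) applied inside $\T_{\K(x_3)}$ (or $\T([x_3])$), that the defining polynomial on that side has leading term $Q(x_3)x_3^{d+1}+c\,x_1x_3^{d}+\cdots$ with $Q$ nonconstant, i.e.\ the arrow points \emph{away} from $[x_3]$ towards the $x_2$-side. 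I would then write the cocycle relation among the three polynomials $P,P',P''$ attached to the three $(1,2)$-edges at $v_1$ (the analogue of \eqref{eq:ABC} in Lemma~\ref{lem:type2weird}, but now one dimension up, using the form in Lemma~\ref{lem:lem2} for $H_1\cap H_2$), compare leading monomials in the graded ring $\K[x_1,x_3]$ with $x_2$ regarded as a coefficient, and conclude that if the left edge's leading direction is $[x_3]$-pointing-outward then the right edge must absorb the resulting highest-degree term with an \emph{incoming} arrow; any other configuration violates the relation by a degree count exactly as in Lemma~\ref{lem:type2weird}.

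For part (b): the two disjoint length $2$ paths in $\LL(v_1)$ mapping to the \emph{same} length $2$ path in $\T(v_1)$ means, after normalising $v_1=[x_3]$, that we have four triangles whose type $2$ and type $3$ vertices project under $\sigma$ to a single pair (an edge of $\T([x_3])$), hence by Remark~\ref{rem:same sigma} all the type $3$ vertices involved are of the form $[x_1+t_i(x_3),x_2+t_i'(x_3),x_3]$ and the type $2$ ones all project to $\pi^{-1}([x_1])$-type or $\pi^{-1}([x_1,x_2])$-type vertices in a matching way; concretely the four triangles are obtained from one another by the action of elements of $H_1\cap K_2$ as in Lemma~\ref{lem:lem3}. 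The pair of arrows in Figure~\ref{fig:impossible}(b) then forces the two relevant polynomials on the two paths to have leading terms pointing in prescribed, incompatible directions relative to $\T([x_2,x_3])$ (resp.\ $\T([x_2',x_3])$): both arrows being, say, outgoing from $v_1$ on parallel edges would produce two independent highest-degree contributions of the same shape $Q(x_3)x_3^{d+1}+cx_1x_3^d$ that cannot cancel in any single cocycle relation, exactly as $a,b,c\ge 2$ was incompatible with \eqref{eq:ABC}. So I would again set up the additive relation among the polynomials labelling the edges, invoke Lemma~\ref{lem:leading terms}(2) to pin down the leading monomials, and derive a contradiction by matching top-degree terms.

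The main obstacle I anticipate is bookkeeping, not conceptual: namely, correctly writing down \emph{which} cocycle/additivity relation among the edge-polynomials is imposed by each of the two specific diagrams in Figure~\ref{fig:impossible} (this depends on how the type $3$ vertices are shared among the triangles, i.e.\ on the precise combinatorics of the figure), and then verifying that in \emph{every} admissible assignment of the remaining (undrawn) arrows the leading-monomial comparison still fails. The case $n=2$ analogues (Lemmas~\ref{lem:type2weird}, \ref{lem:type2strange}) show the mechanism; here one must be careful that the extra variable $x_2$ only enters as a coefficient, so that Lemma~\ref{lem:leading terms}(2)'s shape $Q(x_3)x_3^{d+1}+cx_1x_3^d+R$ is genuinely rigid under the $A_r$-action collapsing vertices to orbits, which is where Remark~\ref{rem:same sigma} and the explicit form of $H_1\cap H_2$ do the real work.
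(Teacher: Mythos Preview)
Your overall instinct to normalise to $v_1=[x_3]$ and then compare leading monomials is correct, and for part~(b) this is essentially what the paper does: one sets up a single additive relation $A(f)+B(g)+(P-R)(x_3)=0$ coming from the two representatives of the shared type~3 vertices, and then uses Lemma~\ref{lem:leading terms} to see that the leading terms of $A(f)$ and $B(g)$ cannot cancel. However, your details there are garbled. Arrows on a $(1,2)$-edge $v_1v_2$ are defined in the tree $\T(v_2)$, not in $\T(v_1)$ or $\T_{\K(x_3)}$, so the relevant variables are $x_2,x_3$ (the generators of $\T([x_2,x_3])$), not $x_1,x_3$. More importantly, the two contributions do \emph{not} have ``the same shape'': the left outgoing arrow forces the leading monomial of $A(f)$ to be $cx_2^d$ (pure power of $x_2$), while the hypothetical right outgoing arrow, after rewriting $g\in\K[x_2+Q(x_3),x_3]$ in terms of $x_2,x_3$ with $\deg Q\ge 2$, forces $B(g)$ to have top-degree part of the form $H(x_3)x_3^{d'+1}+c'x_2x_3^{d'}$ as in Lemma~\ref{lem:leading terms}(2). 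It is precisely this \emph{asymmetry} of shapes that makes cancellation impossible, not a collision of identical shapes.

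For part~(a) there is a genuine gap. There is no ``cocycle relation among three polynomials'' at a type~1 vertex: the configuration of Figure~\ref{fig:impossible}(a) has \emph{four} $(1,2)$-edges, and the polynomials attached to distinct edges $v_1v_2$ and $v_1v_2'$ live in different trees $\T(v_2),\T(v_2')$ with no additive compatibility of the type you invoke from Lemma~\ref{lem:type2weird}. The paper's argument is different and uses structure you do not mention: the two extra type~2 vertices (the top and bottom ones carrying terminal arrows in Figure~\ref{fig:impossible}(a)) are each a \emph{common} type~2 neighbour of a pair of type~3 vertices, one from the left column and one from the right. Writing the right column as $[x_1+A(f)+P(x_3),\,x_2+Q(x_3),\,x_3]$ and $[x_1+R(x_3),\,x_2+Q(x_3),\,x_3]$ via Remark~\ref{rem:same sigma}, the existence of such a common neighbour together with $\deg Q\ge 2$ forces $\deg P\le 1$ and symmetrically $\deg R\le 1$; after absorbing affine terms one gets $P=R=0$, so the right edge is oriented by the very same $A(f)$ as the left edge and hence carries an incoming arrow. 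Your proposed degree count does not access this constraint and would not go through as written.
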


\begin{figure}
$$
\xymatrix@R-1pc{
\mygraph{
!{<0cm,0cm>;<1cm,0cm>:<0cm,1cm>::}
!{(0,0)}*{\typeone}="0"
!{(0,-1.5)}*-{\typetwo}="1"
!{(0,1.5)}*-{\typetwo}="3"
!{(-1,-1)}*{\typethree}="12"
!{(-1,0)}*-{\typetwo}="2"
!{(-1,1)}*{\typethree}="23"
!{(1,1)}*{\typethree}="34"
!{(1,0)}*-{\typetwo}="4"
!{(1,-1)}*{\typethree}="41"
"1"-"12"-"2"-"23"-"3"-"34"-"4"-"41"-"1"
"1"-|@{>>}"0"-|@{<<}"3" "2"-|@{<}"0"-"4"
"12"-"0"-"34" "23"-"0"-"41"
}
\ar[r]^>(.8){\sigma}
&
\mygraph{
!{<0cm,0cm>;<1cm,0cm>:<0cm,1cm>::}
!{(0,-1.5)}*-{\typetwo}="1"
!{(0,-1)}*-{\typethree}="2"
!{(0,0)}*-{\typetwo}="3"
!{(0,1)}*-{\typethree}="4"
!{(0,1.5)}*-{\typetwo}="5"
"1"-"2"-"3"-"4"-"5"
}
&
\mygraph{
!{<0cm,0cm>;<1cm,0cm>:<0cm,1cm>::}
!{(0,-1)}*-{\typethree}="12"
!{(0,0)}*-{\typetwo}="2"
!{(0,1)}*-{\typethree}="23"
"12"-"2"-"23"
}
&
\mygraph{
!{<0cm,0cm>;<1cm,0cm>:<0cm,1cm>::}
!{(1,-1)}*-{\typethree}="12"
!{(1,0)}*-{\typetwo}="2"
!{(1,1)}*-{\typethree}="23"
!{(-1,-1)}*-{\typethree}="15"
!{(-1,0)}*-{\typetwo}="5"
!{(-1,1)}*-{\typethree}="45"
!{(0,0)}*{\typeone}="0"
"12"-_>{v_2'}"2"-"23" "45"-"5"-_<{v_2}"15" "0"-|@{>}"2" "0"-|@{>}"5"
"0"-"12" "0"-"23" "0"-_<(.1){v_1}"45" "0"-"15"
} \quad \ar[l]_>(.8){\sigma}
\\
(a) &&& (b)
}
$$
\caption{}
\label{fig:impossible}
\end{figure}

\begin{proof}
In both cases we can assume that the seven central vertices are labelled as
in Figure~\ref{fig:ABPR}, with $f \in \K[x_2, x_3]$ a component, and $A, P, Q, R$ polynomials in
one variable (here we use Remark \ref{rem:same sigma} that characterises
vertices with the same image under $\sigma$).
\begin{figure}
$$\mygraph{
!{<0cm,0cm>;<1cm,0cm>:<0cm,1cm>::}
!{(1,1)}*-{\typethree}="12"
!{(1,0)}*-{\typetwo}="2"
!{(1,-1)}*-{\typethree}="23"
!{(-1,-1)}*-{\typethree}="45"
!{(-1,0)}*-{\typetwo}="5"
!{(-1,1)}*-{\typethree}="15"
!{(0,0)}*{\typeone}="0"
"12"-^<{[x_1 + A(f) + P(x_3),x_2 +Q(x_3),x_3]}^>{[x_2 +Q(x_3),x_3]}"2"-^>{[x_1 +
R(x_3),x_2 +Q(x_3),x_3]}"23"
"45"-^<{[x_1,x_2,x_3]}^>{[x_2,x_3]}"5"-^>{[x_1 + A(f),x_2,x_3]}"15"
"0"-"2" "0"-|@{>}"5"
"0"-"12" "0"-"23" "0"-"45" "0"-_<(0.2){[x_3]}"15"
}$$
\caption{}
\label{fig:ABPR}
\end{figure}

\begin{enumerate}[(a), wide]
\item
The existence of a vertex of type 2 between $[x_1 + A(f) + P(x_3),x_2
+Q(x_3),x_3]$ and $[x_1 + A(f),x_2,x_3]$, and the fact that $\deg Q \ge 2$
(otherwise the left and right type 2 vertices
would not be
distinct) imply $\deg P \le 1$.
So up to changing the representative we can assume $P = 0$, and by a symmetric
argument $R = 0$.
Then the right horizontal edge is oriented with respect to the leading term of
$A(f)$, as is the left horizontal edge, to the left.

\item
An outgoing right arrow would imply that there exist a polynomial $B
\in \K[X]$ and a component $g \in \K[x_2 + Q(x_3), x_3] = \K[x_2, x_3]$ such
that
\begin{equation}\label{eq:ABPR}
A(f) + B(g) + (P-R)(x_3) = 0.
\end{equation}
We can further assume that the first type 1 vertex on the path from $[x_3]$ to $[f]$ is $[x_2]$.
Then by Lemma \ref{lem:leading terms}(1,2)(and the fact that the vertices $[x_1,
x_2, x_3]$ and $[x_1 + A(f), x_2, x_3]$ are distinct, hence $\deg A(f) \ge 2$), there exist $d \ge 2, d' \ge 0$ such that the leading monomial of $A(f)$ is
$cx_2^{d}$, and the terms in $B(g)$ of degree greater than $d'$ have
the form $H(x_3)x_3^{d'} + c'x_2x_3^{d'}$ for some nonconstant $H$.

So either $d > d'$ and the monomial $cx_2^{d}$ from $A(f)$
cannot be canceled out by any other term in (\ref{eq:ABPR}), or $d'+1> d$ and the same remark applies to the monomial
$c'x_2x_3^{d'}$ from $B(g)$, leading to a contradiction in both cases.\qedhere
\end{enumerate}
\end{proof}

\begin{proof}[Proof of Proposition~\ref{prop:nonpositivecurvature}]
Since the link of a vertex $v_3$ of type 3 is the incidence graph of a projective plane, its minimal length immersed loop has length 6. Thus $K(v_3)\leq 0$ with the only equality case being Figure~\ref{fig:8cases}(a).

Consider now a vertex $v_2$ of type 2.
Recall that its link $\LL(v_2)$ is a (complete) bipartite graph.
If $\deg v_2=2n\geq 8$, then
$$K(v_2) = \frac{1}{4}(4 - \deg v_2) + \frac{\out v_2  - \inn v_2 }{6}\leq 1-2n\frac{1}{4}+n\frac{1}{6}=1-\frac{n}{3}<0.$$
In the case where $2n=6$, in order to obtain $K(v_2)<0$, we need to exclude the situation where there are three outgoing arrows from $v_2$.
This is exactly Lemma~\ref{lem:type2weird}. Finally, if $2n=4$, we have $K(v_2) = \frac{\out v_2  - \inn v_2 }{6}$.
By Lemma~\ref{lem:type2strange}, if there is an outgoing arrow from $v_2$, there is also an incoming arrow to $v_2$.
Thus $K(v_2)\leq 0$ with the only equality cases in Figure~\ref{fig:8cases}(b,c).

Finally consider a vertex $v_1$ of type $1$ with $\deg v_1=2n$. Let $\gamma$ be
the corresponding length $2n$ loop in $\LL(v_1)$. Since $\D$ is reduced,
$\gamma$ is embedded. For each pair of vertices of type 3 at distance 2 in
$\LL(v_1)$ there is only one length 2 path joining them. Hence $2n\geq 6$. We
have $K(v_1) = \frac{1}{6}(6 - n) + \frac{\out v_1  - \inn v_1 }{6}$. Denote by
$\non v_1 $ the number of nonoriented edges among the $n$ type $(1,2)$ edges
incident to~$v_1$. Since $n=\non v_1 +\out v_1 +\inn v_1 $, we have
$$K(v_1) = \frac{1}{6}\big(6 - \non v_1  -2\inn v_1 \big).$$

Consider the projection $\sigma\circ \gamma$ in the tree $\T(v_1)$ from Proposition~\ref{prop:maintree}.
Note that at each type 3 vertex in $\LL(v_1)$, the projection $\sigma$ is a local embedding.
If the number of vertices at which $\sigma\circ \gamma$ is not a local embedding (i.e.\ the number of backtracks in the tree $\T(v_1)$) is at least $3$, then by Lemma~\ref{lem:inarrow}, we have $\inn v_1 \geq 3$. Consequently $K(v_1)\leq 0$.
If $K(v_1)= 0$, then $\inn v_1 =3$, so $\sigma\circ \gamma$ is a tripod.
Moreover, since $\non v_1 =0$, by Lemma~\ref{lem:impossible}(b) all the tripod legs have length 1, so $n=3$ as in Figure~\ref{fig:8cases}(d).
It remains to consider the case where  $\sigma\circ \gamma$ is not a local embedding only at $2$ vertices.
In particular, its image is an embedded path of even length $n$, containing $\frac{n}{2}-1$ length $2$ paths to which we can apply Lemma~\ref{lem:impossible}(b).
If $K(v_1)\geq 0$, then $\non v_1 \leq 2$. Then by Lemma~\ref{lem:impossible}(b) we have $n\leq 6$.
Moreover, if $n=6$, then we are in one of the the cases of Figure~\ref{fig:8cases}(g,h).
It remains to consider the case where $n=4$.
To arrive at Figure~\ref{fig:8cases}(e,f), we need to prove that if there is an outgoing arrow from $v_1$, then the opposite edge of type (1,2) from $v_1$ is equipped with an incoming arrow.
This is exactly Lemma~\ref{lem:impossible}(a).
\end{proof}

As a consequence we now obtain the contractibility of the complex $\Comp$.

\begin{proof}[Proof of Theorem~\ref{thm:contractible}]
By \cite[Prop 5.7]{Lamy}, the complex $\Comp$ is simply connected. Since $\Comp$ is $2$-dimensional, by Whitehead and Hurewicz theorems,
it suffices to show that $\pi_2(\Comp)$ is trivial. Otherwise, using the same operation as in the proof of Lemma~\ref{lem:disc_exist}, we find a reduced sphere diagram in~$\Comp$. All its vertices are interior and by Proposition~\ref{prop:nonpositivecurvature} they all have nonpositive curvature. This contradicts Proposition~\ref{prop:GB}.
\end{proof}

\section{Hyperbolicity}
\label{sec:hyperbolicity}

In this section we prove Theorem~\ref{thm:hyperbolic}.
We will appeal to the following criterion for Gromov-hyperbolicity.

\begin{prop}
\label{prop:hypcriterion}
Let $X$ be a simplicial complex. Suppose that there exist constants $C,C',$
such that for each combinatorial loop $\gamma$ of length $l$ embedded in~$X$
there is a disc diagram $\D\rightarrow X$ with $\partial \D=\gamma$ and a set
$N\subset D^{0}$ of cardinality $\leq Cl$ such that the $C'$--neighbourhoods in
$\D^{1}$ around $\partial \D\cup N$ cover the entire $\D$. Then $X$ is
Gromov-hyperbolic.
\end{prop}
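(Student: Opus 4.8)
The plan is to deduce Gromov-hyperbolicity of $X$ from that of its $1$-skeleton $X^{1}$ with the path metric, which is quasi-isometric to $X$ equipped with any reasonable path metric; so $X$ is hyperbolic precisely when $X^{1}$ is. I would read the hypothesis as a \emph{coarse linear isoperimetric inequality}. Indeed, since the $C'$-balls in $\D^{1}$ around $\partial\D\cup N$ cover $\D$, the set $\partial\D\cup N$ is a $C'$-net in $\D^{1}$ of cardinality at most $l+Cl$; interpreting this cardinality as a \emph{coarse area} of the filling, the hypothesis asserts that every embedded loop of length $l$ bounds a disc of coarse area at most $(C+1)\,l$. The reason to measure area by a net rather than by the number of triangles of $\D$ is that $X$ has vertices of infinite valence, so the triangle count is uncontrolled; the net, by contrast, stays linear in $l$, and this linearity is exactly the feature ruling out flat behaviour (in a flat disc filling a loop of length $l$ one needs quadratically many balls of fixed radius).

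From here I would invoke the classical principle, going back to Gromov, that a linear isoperimetric inequality forces hyperbolicity, in the coarse/metric form adapted to nets (so that no bounded-geometry hypothesis is needed). Concretely, it suffices to establish the thin-triangles (Rips) condition: there is a constant $\delta$, depending only on $C$ and $C'$, such that in every geodesic triangle of $X^{1}$ each side lies in the $\delta$-neighbourhood of the union of the other two. Given such a triangle I would close it up to an embedded loop $\gamma$, fill it with the disc $\D$ and net $N$ furnished by the hypothesis, and run the recursive bisection argument: repeatedly cut $\gamma$ into two strictly shorter loops along a path crossing $\D$ through net points, apply the linear bound to each piece, and reassemble. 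Linearity of the bound is precisely what makes the recursion close up with a constant $\delta$ independent of $l$; throughout, the net plays the role of area, and coarse sub-additivity of the net under gluing along a cutting path (the net of a union is controlled by the sum of the two nets plus points along the cut) replaces the additivity of combinatorial area.

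I expect two technical points to carry the weight. The first, and the genuine obstacle, is the local-to-global step: the net $N$ has cardinality growing linearly with $l$, so it is not small in absolute terms, and the entire difficulty is to extract from it a hyperbolicity constant $\delta$ that does \emph{not} depend on $l$. This is exactly where linearity, as opposed to a merely quadratic coarse-isoperimetric bound, is indispensable, and it is the heart of Gromov's argument; I would either adapt that argument verbatim with coarse area in place of combinatorial area, or cite it after making the net-translation precise. The second point is that the hypothesis applies only to \emph{embedded} loops, whereas the boundary of a geodesic triangle need not be embedded (two sides may share subsegments). I would dispose of this by a routine reduction: where sides coincide the thinness conclusion is automatic, and elsewhere one perturbs or subdivides the triangle boundary into embedded loops to which the hypothesis applies. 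The embedded-loop reduction and the coarse sub-additivity are routine, so the size-independence of $\delta$ is where I would concentrate the effort.
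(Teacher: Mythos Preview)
Your diagnosis of the hypothesis as a coarse linear isoperimetric inequality is exactly right, and that is indeed the mechanism. But the paper's route is shorter and avoids the bisection argument entirely. It quotes the criterion \cite[Thm~III.H.2.9]{BH}: $X$ is hyperbolic provided every embedded loop $\gamma$ bounds a (not necessarily continuous) triangulated disc $T\to X$ with $|T^{0}|$ linear in $|\gamma|$ and adjacent vertices of $T$ sent to vertices at uniformly bounded distance in $X^{1}$. The whole proof then consists in \emph{building} such a $T$ from the given $\D$ and $N$: for each vertex of $\D$ outside $N\cup\partial\D$ pick an edge toward a closer point of $N\cup\partial\D$; this collapses $\D$ into at most $|N\cup\partial\D^{0}|\le (C+1)l$ cells, each of diameter $\le 2C'$ (the ``basins of attraction''); the dual triangulation is $T$, with one vertex $v(s)\in N\cup\partial\D$ per cell and adjacent vertices at distance $\le 2C'+1$. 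So the ``net-translation'' you mention \emph{is} the proof, and the $l$-independence of $\delta$ is outsourced to \cite{BH}.

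Your alternative bisection sketch has a structural gap. When you ``cut $\gamma$ into two shorter loops along a path through net points'' and then ``apply the linear bound to each piece'', the pieces are loops in $X$ that need not be embedded, so the hypothesis does not apply to them; and each new loop would come with its \emph{own} disc and net, unrelated to the original $\D,N$, so the ``coarse sub-additivity of the net under gluing'' you invoke is not available. If instead you intend to do all the cutting inside the single fixed $\D$ with its single net $N$, then there is no recursion to run: you already have the global linear bound, and what remains is precisely to package $(\D,N)$ into a coarse filling---which is what the dual-cell construction above does in one step.
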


The proof is routine and we postpone it to the end of the section.

\begin{proof}[Proof of Theorem~$\ref{thm:hyperbolic}$]
For each combinatorial loop $\gamma$ of length $l$ embedded in $\Comp$, consider
a reduced disc diagram $\D\rightarrow \Comp$ with $\partial \D=\gamma$.
Let $N\subset \D^{0}$ be the set of interior vertices of $\D$ with negative
curvature.
Each vertex in $N$ has curvature $\leq -\frac{1}{6}$ by
Remark~\ref{rem:negativeminus1over6}.
Furthermore, for each boundary vertex of $\D$, its boundary curvature is $\le
\frac{1}{2}$ by Remark~\ref{rem:5over6}. Thus by Proposition~\ref{prop:GB}, we
have $|N|\leq 3l$, so we can take $C=3$. We will prove that $C'=11$ satisfies
the hypothesis of Proposition~\ref{prop:hypcriterion}.

\begin{figure}[t]
$$
\xymatrix{
\mygraph{
!{<0cm,0cm>;<3cm,0cm>:<0cm,1cm>::}
!{(1.5,0)}*{\typeone}="E"
!{(1,1)}*{\typeone}="N"
!{(.5,0)}*{\typeone}="W"
!{(1,-1)}*{\typeone}="S"
"E"-_<{[x_1+x_3^2]}_>{[x_2]}"N"-_>{[x_1]}"W"
"S"-|@{<<}"N"
"S"-_<{[x_3]}"E" "S"-"W"
}
\ar@{<~>}[r]
&
\mygraph{
!{<0cm,0cm>;<3cm,0cm>:<0cm,1cm>::}
!{(1.5,0)}*{\typeone}="E"
!{(1,1)}*{\typeone}="N"
!{(.5,0)}*{\typeone}="W"
!{(1,-1)}*{\typeone}="S"
!{(1,0)}*-{\typetwo}="O"
!{(.75,0.5)}*-{\typetwo}="NW"
!{(.75,-0.5)}*{\typetwo}="SW"
!{(1.25,0.5)}*-{\typetwo}="NE"
!{(1.25,-0.5)}*{\typetwo}="SE"
!{(.833,0)}*{\typethree}="3W"
!{(1.166,0)}*{\typethree}="3E"
"E"-_<{[x_1+x_3^2]}"NE"-_>{[x_2]}"N"-"NW"-_>{[x_1]}"W"
"S"-_<{[x_3]}"E" "S"-"W"
"N"-|@{>}"O"-|@{>>}"S" "E"-"O"-"W"
"N"-"3W"-"NW" "S"-"3W"-"SW"
"N"-"3E"-"NE" "S"-"3E"-"SE"
}
}
$$
\caption{Example of a directed edge in a simplified diagram with only type 1 vertices} \label{fig:arrows
type 1}
\end{figure}
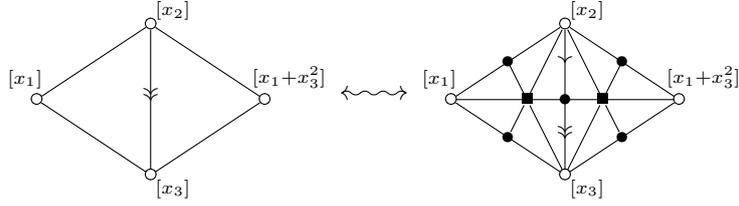

Let $Z\subset \D^{0}$ be the set of interior type 1 vertices with zero curvature.
By Proposition \ref{prop:nonpositivecurvature}, we have $Z = Z_6 \cup Z_8 \cup
Z_{12}$, where $Z_n$ denotes the subset of vertices in $Z$ with valence $n$.
We will prove that each vertex $v_1\in Z$ lies at distance $\leq 10$ from
$\partial \D\cup N$.
So assume by contradiction that there exists $v_1 \in Z$ with no vertex in
$\partial \D\cup N$ at distance $\leq 10$ from $v_1$.
Let $F\subset \D$ (for `Flat') be the union of all the triangles
of $\D$ with their vertices at distance $\leq 10$ in $\D^1$ from $v_1$. Then by Proposition~\ref{prop:nonpositivecurvature},
all interior vertices of $F$ of type $2$ and $3$ have valences $4$ and~$6$, respectively. In the figures of $F$ (or
subdiagrams of $F$) that
follow, we can thus combine the $6$ triangles of $F$ around each type $3$ vertex
to obtain a triangulation of $F$ with only type $1$ vertices. Here an arrow
between two vertices of type $1$ means two successive arrows, and a terminal
arrow means that the second arrow is terminal: see Figure~\ref{fig:arrows type
1}.
Observe also that in these simplified diagrams the valence of each type 1
vertex is half the original one.

\begin{figure}[t]
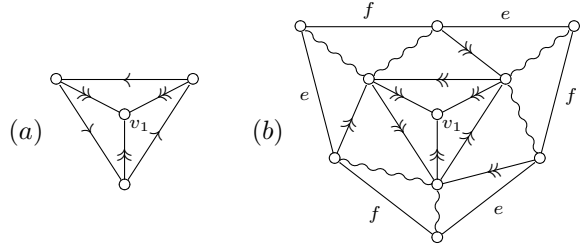

$$(a)\; \mygraph{
!{<0cm,0cm>;<.9cm,0cm>:<0cm,.7cm>::}
!{(-1,1)}*{\typeone}="NW"
!{(1,1)}*{\typeone}="NE"
!{(0,0.33)}*{\typeone}="O"
!{(0,-1)}*{\typeone}="S"
"NW"-|@{<}"NE"-|@{<}"S"-|@{<}"NW"
"NW"-|@{>>}"O" "NE"-|@{>>}^>{v_1}"O" "S"-|@{>>}"O"
}
\qquad
(b) \;\mygraph{
!{<0cm,0cm>;<.9cm,0cm>:<0cm,.7cm>::}
!{(-1,1)}*{\typeone}="NW"
!{(1,1)}*{\typeone}="NE"
!{(0,0.33)}*{\typeone}="O"
!{(0,-1)}*{\typeone}="S"
!{(-2,2)}*{\typeone}="_NW"
!{(0,2)}*{\typeone}="_N"
!{(2,2)}*{\typeone}="_NE"
!{(1.5,-0.5)}*{\typeone}="_E"
!{(0,-2)}*{\typeone}="_S"
!{(-1.5,-0.5)}*{\typeone}="_W"
"NW"-|@{<<}"NE"-|@{<<}"S"-|@{<<}"NW"
"NW"-|@{>>}"O" "NE"-|@{>>}^>{v_1}"O" "S"-|@{>>}"O"
"_NW"-^f"_N"-^e"_NE"-^f"_E"-^e"_S"-^f"_W"-^e"_NW"
"_NW"-@{~}"NW" "_NE"-@{~}"NE" "_S"-@{~}"S"
"NW"-@{~}"_N"-|@{>>}"NE"
"NE"-@{~}"_E"-|@{>>}"S"
"S"-@{~}"_W"-|@{>>}"NW"
}$$
\caption{Case $v_1 \in Z_6$} \label{fig:trianglecase}
\end{figure}

First consider the case where $v_1 \in Z_6$, corresponding to
Figure~\ref{fig:8cases}(d).
By Remark~\ref{rem:outgoing}, the boundary edges of the three triangles
containing $v_1$ must carry arrows, say counterclockwise as in
Figure~\ref{fig:trianglecase}(a) (the clockwise case is analogous).
Around the non-central type 1 vertices, we have two consecutive outgoing
arrows, so the only possible configuration is Figure~\ref{fig:8cases}(g), and
we obtain Figure~\ref{fig:trianglecase}(b) as a subset of the flat
neighbourhood $F$.
By Remark~\ref{rem:outgoing}, the edges labelled $e$ must be oriented
counterclockwise.
Then applying again Remark~\ref{rem:outgoing} to both endpoints of the edges labelled $f$ forces them to be oriented at the same time clockwise
and counterclockwise, contradiction.
Thus each $v_1 \in Z_6$ is at distance $\leq 4$ from $\partial \D\cup N$.

Consider now $v_1 \in Z_8$, that is, we are in one of the two cases of
Figure~\ref{fig:8cases}(e,f).

First consider case (f), see Figure ~\ref{fig:squarecase}.
Analysing the configuration at the top right vertex, by Remark~\ref{rem:outgoingbis}
we cannot simultaneously have the top edge oriented leftwards and the right edge oriented downwards.
Thus by Remark~\ref{rem:outgoing} applied to the top left and the bottom right vertex, without loss of generality the bottom edge is oriented rightwards, as in Figure~\ref{fig:squarecase}(a). Then with two consecutive outgoing arrows the bottom left vertex must correspond to
Figure~\ref{fig:8cases}(g), see Figure~\ref{fig:squarecase}(b).
Applying four times Remark~\ref{rem:outgoing}, we obtain
successively that the edges $e_1,e_2,e_3,e_4$ are all oriented clockwise.
Then Remark~\ref{rem:outgoingbis} gives a contradiction from the
point of view of the right bottom vertex.
Thus each $v_1 \in Z_8$ corresponding to case (f) is at distance $\leq 4$ from
$\partial \D\cup N$.

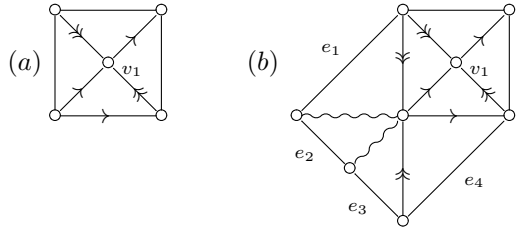
\begin{figure}[t]
$$
\xymatrix{
(a)\;\mygraph{
!{<0cm,0cm>;<.7cm,0cm>:<0cm,.7cm>::}
!{(-1,1)}*{\typeone}="NW"
!{(1,1)}*{\typeone}="NE"
!{(0,0)}*{\typeone}="O"
!{(-1,-1)}*{\typeone}="SW"
!{(1,-1)}*{\typeone}="SE"
"NW"-"NE"-"SE"-|@{<}"SW"-"NW"
"NW"-|@{>>}"O" "SE"-|@{>>}"O"
"NE"-|@{<}^>{v_1}"O"  "SW"-|@{>}"O"
}
&
(b)\;\mygraph{
!{<0cm,0cm>;<.7cm,0cm>:<0cm,.7cm>::}
!{(-1,1)}*{\typeone}="NW"
!{(1,1)}*{\typeone}="NE"
!{(0,0)}*{\typeone}="O"
!{(-1,-1)}*{\typeone}="SW"
!{(1,-1)}*{\typeone}="SE"
!{(-3,-1)}*{\typeone}="SW1"
!{(-2,-2)}*{\typeone}="SW2"
!{(-1,-3)}*{\typeone}="SW3"
"NW"-"NE"-"SE"-|@{<}"SW"-|@{<<}"NW"
"NW"-|@{>>}"O" "SE"-|@{>>}"O"
"NE"-|@{<}^>{v_1}"O"  "SW"-|@{>}"O"
"NW"-_{e_1}"SW1"-_{e_2}"SW2"-_{e_3}"SW3"-_{e_4}"SE"
"SW1"-@{~}"SW" "SW2"-@{~}"SW" "SW3"-|@{>>}"SW"
}
}
$$
\caption{Case $v_1 \in Z_8$, subcase (f)} \label{fig:squarecase}
\end{figure}

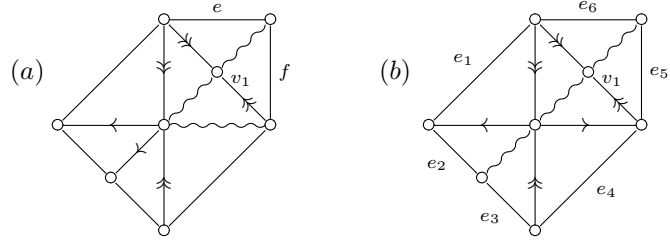
\begin{figure}[t]
$$
\xymatrix{
(a)\;\mygraph{
!{<0cm,0cm>;<.7cm,0cm>:<0cm,.7cm>::}
!{(-1,1)}*{\typeone}="NW"
!{(1,1)}*{\typeone}="NE"
!{(0,0)}*{\typeone}="O"
!{(-1,-1)}*{\typeone}="SW"
!{(1,-1)}*{\typeone}="SE"
!{(-3,-1)}*{\typeone}="SW1"
!{(-2,-2)}*{\typeone}="SW2"
!{(-1,-3)}*{\typeone}="SW3"
"NW"-^{e}"NE"-^{f}"SE"-@{~}"SW"-|@{<<}"NW"
"NW"-|@{>>}"O" "SE"-|@{>>}"O"
"NE"-@{~}^>{v_1}"O"  "SW"-@{~}"O"
"NW"-"SW1"-"SW2"-"SW3"-"SE"
"SW1"-|@{<}"SW" "SW2"-|@{<}"SW" "SW3"-|@{>>}"SW"
}
&
(b)\;\mygraph{
!{<0cm,0cm>;<.7cm,0cm>:<0cm,.7cm>::}
!{(-1,1)}*{\typeone}="NW"
!{(1,1)}*{\typeone}="NE"
!{(0,0)}*{\typeone}="O"
!{(-1,-1)}*{\typeone}="SW"
!{(1,-1)}*{\typeone}="SE"
!{(-3,-1)}*{\typeone}="SW1"
!{(-2,-2)}*{\typeone}="SW2"
!{(-1,-3)}*{\typeone}="SW3"
"NW"-^{e_6}"NE"-^{e_5}"SE"-|@{<}"SW"-|@{<<}"NW"
"NW"-|@{>>}"O" "SE"-|@{>>}"O"
"NE"-@{~}^>{v_1}"O"  "SW"-@{~}"O"
"NW"-_{e_1}"SW1"-_{e_2}"SW2"-_{e_3}"SW3"-_{e_4}"SE"
"SW1"-|@{<}"SW" "SW2"-@{~}"SW" "SW3"-|@{>>}"SW"
}
}
$$
\caption{Case $v_1 \in Z_8$, subcase (e)} \label{fig:squaretwocase}
\end{figure}

We continue with case (e). If the bottom left vertex is also of the form (e),
we replace $v_1$ with this vertex and observe that now the new bottom left
vertex is not of the form (e) anymore.
Indeed, otherwise the top left and bottom right vertices would have three consecutive
outgoing arrows contradicting Remark~\ref{rem:outgoing}.
So after at most one such replacement the bottom left vertex is of the form (g) or
(h).

On the one hand, if it is of the form (g), as in Figure~\ref{fig:squaretwocase}(a),
then by Remark~\ref{rem:outgoing}, the edges $e$ and $f$ are both oriented
away from the top right vertex. But this contradicts
Remark~\ref{rem:outgoing} at that vertex.

On the other hand, if the bottom
left vertex is of the form~(h), as in Figure~\ref{fig:squaretwocase}(b),
applying Remark~\ref{rem:outgoing} we obtain successively that $e_1$ is oriented clockwise, and $e_6,e_5$ are oriented counterclockwise.
Then the common vertex of $e_4$ and $e_5$ admits two consecutive outgoing arrows, hence must correspond to
case (g) of Figure \ref{fig:8cases}, and thus $e_4$ is nonoriented.
Applying Remark~\ref{rem:outgoing} again we get that $e_3,e_2$ are oriented counterclockwise.
By Remark~\ref{rem:outgoingbis}  this is not possible from the point of view of the common vertex of $e_1$ and $e_2$.

Given that we might have replaced $v_1$ at the beginning of the discussion, this
shows that each $v_1 \in Z_8$ in case (e) is
at distance $\leq 6$ from $\partial \D\cup N$.

Finally, consider $v_1 \in Z_{12}$.
We now prove that $v_1$ is at distance $\leq 4$ from
$Z_6 \cup Z_8  \cup \partial \D \cup N$.
Otherwise, the configurations at  all type 1 vertices at distance $\leq 4$ from
$v_1$ are as in Figure~\ref{fig:8cases}(g,h).
In particular up to symmetry we are in the situation of Figure
\ref{fig:hexagon}, where all type 1 vertices are of type (g) or (h), which
gives that all arrows in sight are terminal.
There are two possibilities for the two incoming arrows in
the dotted hexagon on the right, which lead to the two excluded diagrams of
the following final Lemma~\ref{lem:last}.

This proves that any type 1 vertex of $\D$ is at distance $\leq 4+6=10$ from
$\partial \D \cup N$ and consequently, any vertex of $\D$ is at distance $\leq
11$ from $\partial \D \cup N$.
\end{proof}

\begin{figure}[t]
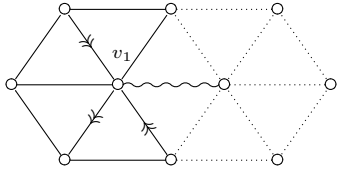

$$
\mygraph{
!{<0cm,0cm>;<.7cm,0cm>:<0cm,1cm>::}
!{(1,-1)}*{\typeone}="11"
!{(3,-1)}*{\typeone}="12"
!{(5,-1)}*{\typeone}="13"
!{(1,1)}*{\typeone}="31"
!{(3,1)}*{\typeone}="32"
!{(5,1)}*{\typeone}="33"
!{(0,0)}*{\typeone}="21"
!{(2,0)}*{\typeone}="22"
!{(4,0)}*{\typeone}="23"
!{(6,0)}*{\typeone}="24"
"21"-"11"-|@{<<}"22"-|@{<<}"12"-@{.}"23"-@{.}"13"-@{.}"24"
"21"-"31"-^>(.8){v_1}|@{>>}"22"-"32"-@{.}"23"-@{.}"33"-@{.}"24"
"11"-"12"-@{.}"13"
"21"-"22"-@{~}"23"-@{.}"24"
"31"-"32"-@{.}"33"
}$$
\caption{Case $v_1 \in Z_{12}$}
\label{fig:hexagon}
\end{figure}

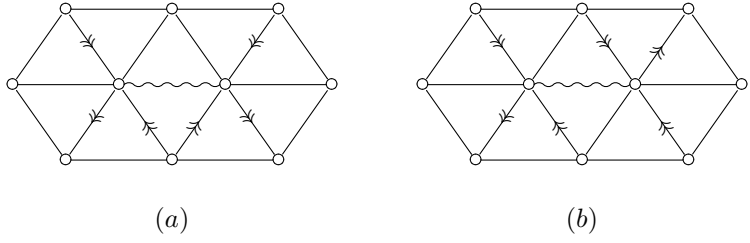
\begin{figure}[t]
$$
\xymatrix@R-1pc{
\mygraph{
!{<0cm,0cm>;<.7cm,0cm>:<0cm,1cm>::}
!{(1,-1)}*{\typeone}="11"
!{(3,-1)}*{\typeone}="12"
!{(5,-1)}*{\typeone}="13"
!{(1,1)}*{\typeone}="31"
!{(3,1)}*{\typeone}="32"
!{(5,1)}*{\typeone}="33"
!{(0,0)}*{\typeone}="21"
!{(2,0)}*{\typeone}="22"
!{(4,0)}*{\typeone}="23"
!{(6,0)}*{\typeone}="24"
"21"-"11"-|@{<<}"22"-|@{<<}"12"-|@{>>}"23"-|@{>>}"13"-"24"
"21"-"31"-|@{>>}"22"-"32"-"23"-|@{<<}"33"-"24"
"11"-"12"-"13"
"21"-"22"-@{~}"23"-"24"
"31"-"32"-"33"
}
&
\mygraph{
!{<0cm,0cm>;<.7cm,0cm>:<0cm,1cm>::}
!{(1,-1)}*{\typeone}="11"
!{(3,-1)}*{\typeone}="12"
!{(5,-1)}*{\typeone}="13"
!{(1,1)}*{\typeone}="31"
!{(3,1)}*{\typeone}="32"
!{(5,1)}*{\typeone}="33"
!{(0,0)}*{\typeone}="21"
!{(2,0)}*{\typeone}="22"
!{(4,0)}*{\typeone}="23"
!{(6,0)}*{\typeone}="24"
"21"-"11"-|@{<<}"22"-|@{<<}"12"-"23"-|@{<<}"13"-"24"
"21"-"31"-|@{>>}"22"-"32"-|@{>>}"23"-|@{>>}"33"-"24"
"11"-"12"-"13"
"21"-"22"-@{~}"23"-"24"
"31"-"32"-"33"
}
\\
(a) & (b)
}$$
\caption{Forbidden bihexagonal diagrams}
\label{fig:last}
\end{figure}

\begin{lemma}
\label{lem:last}
There are no reduced disc diagrams as in Figure~\ref{fig:last}, (a) or (b).
\end{lemma}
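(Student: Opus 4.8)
The plan is to argue by contradiction, exactly in the spirit of Lemmas~\ref{lem:type2weird}, \ref{lem:type2strange} and~\ref{lem:impossible}: assume a reduced disc diagram as in Figure~\ref{fig:last}(a) or~(b) exists, put it into a normal form with the $\TA$-action, convert the terminal arrows into polynomial identities among the components defining the type~$3$ vertices, and reach a contradiction by a leading-monomial comparison based on Lemma~\ref{lem:leading terms}.

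First I would unfold the simplified bihexagonal pictures of Figure~\ref{fig:last} back into $\Comp$: every small triangle is a type~$3$ vertex, every edge joining two type~$1$ vertices carries a type~$2$ vertex on the corresponding pair of $(1,2)$-edges, and every double arrow encodes a terminal arrow of $\Comp$. The two central type~$1$ vertices $v,v'$ (the vertices ``$22$'' and ``$23$'' of Figure~\ref{fig:hexagon}) both have valence $12$, and the hexagons around them overlap in the two small triangles meeting the edge $vv'$. Using transitivity of $\TA$ on triangles I would normalize $v$ to $[x_3]$, the type~$2$ vertex carried by the edge $vv'$ to $[x_2,x_3]$, one of the two type~$3$ vertices incident to this $(1,2)$-edge to $[x_1,x_2,x_3]$, and then, since $\LL(v_3)$ is the incidence graph of $\p^2_\K$ (so that a $6$-cycle in it is a triangle of $\p^2_\K$, whose three type~$1$ vertices are non-collinear), normalize the remaining two type~$1$ vertices of that small triangle, one of which is $v'$, to $[x_2]$ and $[x_1]$. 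This uses up the whole $\TA$-action, and every one of the ten type~$3$ vertices shown now lies in $\LL([x_3])$ or in $\LL([x_2])$.

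Next comes the propagation. By Lemma~\ref{lem:inarrow}, a terminal arrow on a $(1,2)$-edge $v_1u$ with $u$ of type~$2$, pointing to $v_1$, identifies the $\sigma$-images in $\T(v_1)$ of the two type~$3$ vertices flanking it, so by the $\TA$-equivariant version of Remark~\ref{rem:same sigma} these two vertices share a representative whose last coordinate is the component of $v_1$ and whose first two coordinates differ only by polynomials in that last coordinate; the few edges carrying merely an orientation, or none, are instead controlled by Lemma~\ref{lem:leading terms}(1). Reading off the arrows of Figure~\ref{fig:last} and starting from $[x_1,x_2,x_3]$, this determines each of the ten type~$3$ vertices up to the leading form of the polynomial by which it differs from a neighbour, and the requirement that these differences compose trivially when one runs around each of the two hexagons forces a polynomial identity in $\K[x_1,x_2,x_3]$: a non-trivial alternating sum of expressions $A_i(f_i)$, with $A_i\in\K[X]$ of degree $\ge 1$, with $f_i\in\K[x_2,x_3]$ a component of the tree $\T([x_2,x_3])$, and with each $A_i(f_i)$ of degree $\ge 2$ (flanking type~$3$ vertices being distinct), has to vanish or to equal a polynomial in $x_3$ alone; moreover, since $[x_1],[x_2],[x_3]$ are pairwise independent type~$1$ vertices, the leading forms of the various $f_i$ are powers of pairwise non-proportional linear forms. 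As in equations~(\ref{eq:ABC}) and~(\ref{eq:ABPR}), the monomial of highest degree in such a sum cannot be cancelled, a contradiction. The diagrams (a) and~(b) of Figure~\ref{fig:last} differ only in which of the two edges at the dotted hexagon around $v'$ carries its incoming terminal arrow, i.e.\ only in a single sign in this identity, so one computation disposes of both.

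The main obstacle I anticipate is organizational rather than conceptual: around the whole bihexagonal region one must track, edge by edge, which type~$1$ vertex each terminal arrow points to, hence which tree $\T(v_1)$ --- equivalently which coordinate frame --- governs the leading term of each component, so as to end up with a genuine identity written in one fixed set of variables. Once the first-step normalization is pinned down and this bookkeeping is complete, the contradiction itself is, as in Lemma~\ref{lem:impossible}(b), a one-line comparison of top-degree monomials.
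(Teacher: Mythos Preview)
Your overall plan --- normalize by the $\TA$-action, propagate the labels of the type~$3$ vertices around the two hexagons using the terminal arrows and Remark~\ref{rem:same sigma}, and compute the bottom-centre vertex in two ways to obtain a polynomial identity --- is exactly what the paper does. With the normalization $v=[x_3]$, $v'=[x_2]$, top-centre $=[x_1]$, the identity one arrives at in case~(a) is
\[
A(x_2)+E\bigl(x_3+B(x_2)\bigr)\;=\;C(x_3)+F\bigl(x_2+D(x_3)\bigr)\qquad\text{(up to an affine term)},
\]
with $A,B,C,D,E,F\in\K[X]$ all of degree $\ge 2$; case~(b) gives an identity of the same shape, so you are right that one argument covers both.

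Where your sketch goes wrong is the final step. You assert that ``the leading forms of the various $f_i$ are powers of pairwise non-proportional linear forms'', so that the contradiction is a direct replay of equation~(\ref{eq:ABC}). But that is false here: the four components in play are $x_2,\ x_3+B(x_2),\ x_3,\ x_2+D(x_3)$, and since $\deg B,\deg D\ge 2$ the first two both have leading form a power of $x_2$, while the last two both have leading form a power of $x_3$. So the top-degree monomials on each side can cancel internally, and the analogy with Lemma~\ref{lem:type2weird} breaks down. A naive leading-monomial comparison does not immediately yield a contradiction.

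The paper's device is to apply the mixed partial $\dfrac{\partial^2}{\partial x_2\,\partial x_3}$ to both sides. This kills the one-variable terms $A(x_2)$ and $C(x_3)$ and leaves
\[
B'(x_2)\,E''\bigl(x_3+B(x_2)\bigr)\;=\;D'(x_3)\,F''\bigl(x_2+D(x_3)\bigr),
\]
whose left-hand side has highest-degree monomial a genuine positive power of $x_2$ and whose right-hand side a positive power of $x_3$; \emph{now} the leading monomials cannot match, and this is the contradiction. So your ``one-line comparison of top-degree monomials'' is correct in spirit, but only after this differentiation trick separates the two variables --- a step your proposal is missing.
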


\begin{proof}
For Figure~\ref{fig:last}(a), using the action of the group $\TA$, we can make the choice of representatives as in Figure~\ref{fig:bihex}, where $A, B, C, D \in \K[X]$ are polynomials of degree at least 2.
\begin{figure}
$$
\mygraph{
!{<0cm,0cm>;<.7cm,0cm>:<0cm,1cm>::}
!{(1,-1)}*{\typeone}="11"
!{(3,-1)}*{\typeone}="12"
!{(5,-1)}*{\typeone}="13"
!{(1,1)}*{\typeone}="31"
!{(3,1)}*{\typeone}="32"
!{(5,1)}*{\typeone}="33"
!{(0,0)}*{\typeone}="21"
!{(2,0)}*{\typeone}="22"
!{(4,0)}*{\typeone}="23"
!{(6,0)}*{\typeone}="24"
"21"-"11"-|@{<<}"22"-|@{<<}"12"-|@{>>}"23"-|@{>>}"13"-"24"
"21"-"31"-|@{>>}"22"-"32"-"23"-|@{<<}"33"-"24"
"11"-_<(-.2){x_2 + D(x_3)}_>{?}"12"-_>{x_3 +
B(x_2)}"13"
"21"-^<(-.5){x_1 + C(x_3)}"22"-^<(.2){x_3}^>(.8){x_2}"23"-^>(1.5){x_1 +
A(x_2)}"24"
"31"-"32"-^<{x_1}"33"
}$$
\caption{}
\label{fig:bihex}
\end{figure}
Moreover, there exist polynomials $E, F \in \K[X]$, also of degree at least 2, such that the central
bottom vertex has the form
$$[x_1+A(x_2) + E(x_3+B(x_2))] = [x_1+C(x_3) + F(x_2+D(x_3))].$$
By computing the second-order derivative $\frac{\partial^2 }{\partial x_2\partial x_3}$ on both sides we obtain
$$B'(x_2)E''(x_3+B(x_2)) =  D'(x_3)F''(x_2+D(x_3)),$$
which is impossible in view of the monomials of highest degrees.

Similarly in the situation of Figure~\ref{fig:last}(b) we can choose representatives as in Figure~\ref{fig:bihex2}.
\begin{figure}
$$
\mygraph{
!{<0cm,0cm>;<.7cm,0cm>:<0cm,1cm>::}
!{(1,-1)}*{\typeone}="11"
!{(3,-1)}*{\typeone}="12"
!{(5,-1)}*{\typeone}="13"
!{(1,1)}*{\typeone}="31"
!{(3,1)}*{\typeone}="32"
!{(5,1)}*{\typeone}="33"
!{(0,0)}*{\typeone}="21"
!{(2,0)}*{\typeone}="22"
!{(4,0)}*{\typeone}="23"
!{(6,0)}*{\typeone}="24"
"21"-"11"-|@{<<}"22"-|@{<<}"12"-"23"-|@{<<}"13"-"24"
"21"-"31"-|@{>>}"22"-"32"-|@{>>}"23"-|@{>>}"33"-"24"
"11"-_<(-.2){x_2 + D(x_3)}_>{?}"12"-"13"
"21"-^<(-.5){x_1 + C(x_3)}"22"-^<(.2){x_3}^>(.8){x_2}"23"-^>(1.9){x_1 +
B(x_3+A(x_2))}"24"
"31"-"32"-^<{x_1}^>{x_3 + A(x_2)}"33"
}$$
\caption{}
\label{fig:bihex2}
\end{figure}
Then there exist polynomials $E, F \in \K[X]$ such that the central
bottom vertex has the form
$$[x_1+E(x_2) + B(x_3+A(x_2))] = [x_1+C(x_3) + F(x_2+D(x_3))],$$
and we get a contradiction as in the previous case.
\end{proof}

\begin{proof}[Proof of Proposition~\ref{prop:hypcriterion}]
By \cite[Thm~III.H.2.9]{BH}, via Definitions 2.1, and Remarks 2.3(1,2,5) therein, to prove that $\Comp$ is Gromov-hyperbolic it suffices to show that any combinatorial loop $\gamma$ embedded in $\Comp$ is the boundary loop of a following triangulated disc $T\rightarrow \Comp$.
First of all, we require that $|T^0|$ is linear in $|\gamma|$.
Secondly, we require that adjacent vertices in $T^0$ are sent to vertices in $\Comp^0$ at a uniformly bounded distance.
The map $T\rightarrow \Comp$ does not need to be combinatorial (or even continuous).

To find such $T$, let $\D$ be the disc diagram with boundary $\gamma$ guaranteed
by the hypothesis. For each vertex $v$ of $\D$ outside $N\cup \partial \D^0$
choose an edge $e(v)$ joining $v$ to a vertex closer to $N\cup \partial \D^0$ in
$\D^1$. Combine the triangles of the barycentric subdivision $\D'$ of $\D$
incident to a common vertex of $\D$ to form a combinatorial subdivision $S$ of
$\D$. Combine the cells of $S$ containing vertices joined by some $e(v)$. Label
each such combined cell $s$ by the unique vertex $v(s)\in N\cup \partial \D^0$
that it contains. By the hypothesis, any vertex in such a cell is at distance
$\leq C'$ in $\D^1$ from $v(s)$. If some cell $s$ becomes non-simply connected,
incorporate all the cells that it bounds into $s$. We thus obtain a
combinatorial subdivision $S'$ of $\D$ with at most $|N\cup \partial \D^{0}|\leq
Cl$ cells. Since at each vertex of $S'$ there are exactly 3 cells meeting,
consider the triangulation $T$ dual to $S'$ with vertices labelled $v(s)$. For
each edge $v(s)v(s')$ of $T$, in the metric of $\D^1$ we have $|v(s),v(s')|\leq
2C'+1$, as desired.
\end{proof}

\section{Loxodromic WPD element}
\label{sec:loxodromic}

In this section we prove Theorems~\ref{thm:loxo} and~\ref{thm:wpd}. Recall
that $g,h,f\in\TA$ are of the form $g^{-1}=(x_2,x_1+x_2x_3,x_3), \
h^{-1}=(x_3,x_1,x_2),\ f=g^n\circ h$, where $n\geq 0$. We now define a candidate axis $\gamma$ for $f$.

\begin{defin} Consider the following map $\gamma\colon \R \rightarrow
\Comp^1$. Let $\gamma(0)=[x_1],\gamma(1)=[x_1,x_3],\gamma(2)=[x_3]$, and
let $\gamma[0,2]$ be the corresponding length 2 path.
Note that $f \cdot \gamma(0) =[x_1\circ f^{-1}]=[x_1\circ h^{-1} \circ
g^{-n}]=[x_3]=\gamma(2)$. We can thus extend the definition to
$\gamma[2k, 2k+2]=f^k \cdot \gamma[0,2]$.
\end{defin}

\begin{rem}
\label{rem:easyboundarycurv}
For any reduced disc diagram with $\gamma[0,2]$ in the boundary, the boundary
curvature at the type $2$ vertex $\gamma(1)$ is nonpositive, since
there are at least 3 nonoriented edges from $\gamma(1)$: two boundary edges
and one of type $(2,3)$.
\end{rem}

We shall need the following technical lemma, which is similar to Lemma \ref{lem:impossible}.

\begin{lemma} \label{lem:no outgoing}
In the situation of Figure \ref{fig:no outgoing}, the horizontal edge on the
right is nonoriented.
\end{lemma}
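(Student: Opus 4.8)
The plan is to imitate the pattern already used in Lemma~\ref{lem:impossible}(a) and Lemma~\ref{lem:last}: normalize the picture by the $\TA$-action, write down the unique possible forms of the vertices forced by the arrows and by the requirement that the diagram be reduced, and then derive a contradiction from a leading-term (or partial-derivative) computation if the right-hand horizontal edge were oriented with an outgoing arrow; an incoming arrow will also be excluded, leaving only the nonoriented option. Concretely, first I would apply Lemma~\ref{lem:adjacent} (and the transitivity of $\TA$ on configurations) to place the central type~1 vertex at $[x_3]$, the relevant type~2 vertices as $[x_2,x_3]$ and its translate, and one type~3 vertex as $[x_1,x_2,x_3]$, so that the outgoing arrow on the left horizontal edge means the opposite type~3 vertex is $[x_1+A(f),x_2,x_3]$ for some component $f=f(x_2,x_3)\in\T([x_3])$ and some nonaffine $A\in\K[X]$. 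Using Remark~\ref{rem:same sigma} to normalize the image under $\sigma$, the vertices along the right branch get the form $[x_1+\cdots, x_2+Q(x_3), x_3]$ exactly as in Figure~\ref{fig:ABPR}, with $\deg Q\ge 2$ forcing the "vertical shift" polynomials to be affine, hence (after changing representatives) zero.

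The heart of the argument is then the leading-term analysis. As in the proof of Lemma~\ref{lem:impossible}(b), I would normalize so that the first type~1 vertex on the geodesic from $[x_3]$ to $[f]$ in $\T_\K$ is $[x_2]$, and invoke Lemma~\ref{lem:leading terms}(1,2): since the relevant type~3 vertices are distinct we get $\deg A(f)\ge 2$, and the leading monomial of $A(f)$ is $cx_2^{d}$ with $d\ge 2$, while any component $g$ lying on the correct side has $g=Q(x_3)x_3^{d'+1}+c'x_2x_3^{d'}+(\text{lower order})$ with $Q$ nonconstant, and likewise for $B(g)$. An outgoing arrow on the right horizontal edge would force an identity of the shape $A(f)+B(g)+(\text{affine in }x_3)=0$ (or with a sign, depending on the arrow directions in Figure~\ref{fig:no outgoing}); comparing the two candidate top-degree monomials $cx_2^{d}$ and $c'x_2 x_3^{d'}$, exactly one of them — depending on whether $d>d'$ or $d'+1>d$ — survives as a monomial that cannot be cancelled by any other term in the relation, giving the contradiction. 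If instead the right-hand edge carried an incoming arrow, the same normalization would put the type~3 vertex on the far right in the form dictated by Remark~\ref{rem:same sigma} and the incoming arrow, and one again reads off incompatible leading terms; so the edge must be nonoriented.

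I expect the main obstacle to be purely bookkeeping: extracting from Figure~\ref{fig:no outgoing} the precise shape of all seven (or however many) central vertices and the precise algebraic relation the arrows impose — in particular getting the signs and the "$\sigma$-alignment" polynomials right, and verifying that the vertices one needs to be distinct really are distinct so that the degree bounds $d,d'\ge$ the claimed values hold. Once the relation is pinned down correctly, the leading-monomial comparison is the same two-case dichotomy ($d>d'$ versus $d'+1>d$) that already appears in Lemma~\ref{lem:impossible}(b), and closes the proof. A secondary subtlety, as noted in the remark after Figure~\ref{fig:arrows}, is that the decomposition $P=A(f)$ is not unique; but since only the leading term of $A(f)$ enters the computation and that is well-defined, this causes no real trouble.
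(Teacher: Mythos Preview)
Your plan rests on a misreading of Figure~\ref{fig:no outgoing}. The left horizontal edge there is \emph{nonoriented} (the wavy line), not equipped with an outgoing arrow. The two type~3 vertices on the left are the specific vertices $[x_1,x_2,x_3]$ and $[x_1+x_2x_3,x_2,x_3]$: the polynomial appearing is exactly $x_2x_3$, coming from the generator $g^{-1}=(x_2,x_1+x_2x_3,x_3)$ used to build the loxodromic element. This $x_2x_3$ is precisely \emph{not} of the form $A(f)$ for a component $f$ --- that is why the left edge is nonoriented in the first place. So there is no ``$A(f)$'' on the left to which you could apply Lemma~\ref{lem:leading terms}, and the dichotomy $d>d'$ versus $d'+1>d$ from Lemma~\ref{lem:impossible}(b) never enters.

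The correct argument is both more specific and much shorter. Assuming the right horizontal edge is oriented (in either direction --- no need to separate incoming from outgoing, since orientation just means the difference of the two type~3 vertices is $A(\phi)$ for some component $\phi$), the two type~3 vertices on the right give
\[
x_2x_3 + (P-R)(x_3) + A(\phi) = 0
\]
for some component $\phi\in\K[x_2+Q(x_3),x_3]=\K[x_2,x_3]$ and some $A\in\K[X]$. Now compare degrees in $x_2$: the term $x_2x_3$ has $x_2$-degree $1$, so $A(\phi)$ must too, forcing $\deg A=\deg_{x_2}\phi=1$. A component with $\deg_{x_2}\phi=1$ has the form $a'x_2+S'(x_3)$ (Jacobian), whence $A(\phi)=ax_2+S(x_3)$, which contains no monomial $x_2x_3$ to cancel. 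That is the whole proof.
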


\begin{figure}
$$
\xymatrix@R-1pc{
\mygraph{
!{<0cm,0cm>;<1cm,0cm>:<0cm,1cm>::}
!{(0,-1)}*-{\typethree}="12"
!{(0,0)}*-{\typetwo}="2"
!{(0,1)}*-{\typethree}="23"
"12"-"2"-"23"
}
&
\mygraph{
!{<0cm,0cm>;<1cm,0cm>:<0cm,1cm>::}
!{(1,-1)}*-{\typethree}="12"
!{(1,0)}*-{\typetwo}="2"
!{(1,1)}*-{\typethree}="23"
!{(-1,-1)}*-{\typethree}="15"
!{(-1,0)}*-{\typetwo}="5"
!{(-1,1)}*-{\typethree}="45"
!{(0,0)}*{\typeone}="0"
"12"-_<{[x_1 +R(x_3),x_2+Q(x_3), x_3]}_>{[x_2+Q(x_3), x_3]}"2"-_>{[x_1
+ x_2x_3 +P(x_3),x_2+Q(x_3), x_3]}"23"
"45"-_<{[x_1 + x_2x_3,x_2, x_3]}"5"-_<{[x_2,x_3]}_>{[x_1 ,x_2, x_3]}"15"
"0"-"2" "0"-@{~}"5"
"0"-"12" "0"-"23" "0"-_<(.2){[x_3]}"45" "0"-"15"
} \quad \ar[l]_>(.9){\sigma}
}
$$
\caption{Here $P,Q,R \in \K[X]$ are arbitrary polynomials, so some
of the vertices on the right and on the left might coincide}
\label{fig:no outgoing}
\end{figure}
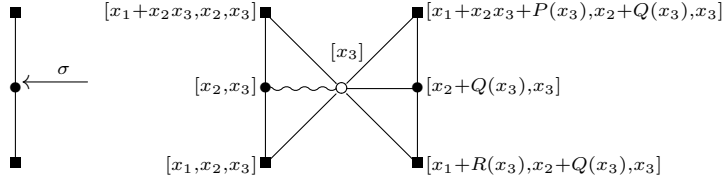

\begin{proof}
Assume the right horizontal edge is oriented.
Then there exist a component $\phi(x_2, x_3) \in \K[x_2, x_3] = \K[x_2 + Q(x_3),x_3]$ and a polynomial $A(X) \in \K[X]$ such that
$$x_2x_3 + (P-R)(x_3) + A(\phi) = 0.$$
Comparing degrees in $x_2$ we must have
$\deg_{x_2} \phi = 1$ and $\deg A = 1$, so that $A(\phi)$ has the form
$$A(\phi) = ax_2 + S(x_3).$$
But then $A(\phi)$ contains no terms cancelling out with $x_2x_3$, contradiction.
\end{proof}

We now establish the key result towards the proof that $f = g^n \circ h$ is a
loxodromic WPD element, which is an analogue of
Remark~\ref{rem:easyboundarycurv} for a type $1$ vertex.

\begin{prop}
\label{pro:boundarycurvature}
Let $\D\rightarrow \Comp$ be any reduced disc diagram that agrees on a subpath of
$\partial \D$ with $\gamma[1,3]$. Then the boundary curvature at
$\gamma(2)$ is
$\leq \frac{2-n}{6}$.
\end{prop}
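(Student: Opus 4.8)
The plan is to reduce the statement to a combinatorial inequality about the fan of triangles of $\D$ around $\gamma(2)$, to bound the length of that fan from below using the tree $\T(\gamma(2))$, and finally to book-keep the in/out arrows.

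\textbf{Normalization and reduction.} By $\TA$-transitivity I first move $\gamma(1),\gamma(2)$ to $[x_1,x_3],[x_3]$. Then $\gamma(3)=f\cdot\gamma(1)=g^n\circ h\cdot\gamma(1)$, and since $h\cdot[x_1,x_3]=[x_1\circ h^{-1},x_3\circ h^{-1}]=[x_3,x_2]=[x_2,x_3]=[x_1\circ g^{-1},x_3\circ g^{-1}]=g\cdot[x_1,x_3]$, I obtain the clean identity $\gamma(3)=g^{n+1}\cdot\gamma(1)$; also $g\cdot[x_3]=[x_3]$, so $g\in\Stab([x_3])$. Since $\gamma[1,3]\subset\partial\D$, the vertex $\gamma(2)$ is a boundary vertex incident to exactly the two boundary edges of $\gamma[1,3]$, both of type $(1,2)$ and hence nonoriented. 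Write $\deg\gamma(2)=2j+1$, so there are $j+1$ edges of type $(1,2)$ at $\gamma(2)$; substituting $j+1=\non\gamma(2)+\out\gamma(2)+\inn\gamma(2)$ into the formula for $K_\partial(v_1)$ from Section~\ref{sec:curvature}, a routine computation shows that $K_\partial(\gamma(2))\le\frac{2-n}{6}$ is equivalent to
\[ j+\inn\gamma(2)-\out\gamma(2)\ \ge\ n+1 . \]

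\textbf{Size of the fan.} The triangles of $\D$ at $\gamma(2)$, viewed in the link $\LL(\gamma(2))=\LL([x_3])$, form a path $\bar\gamma$ of combinatorial length $2j$ from $[x_1,x_3]$ to $\gamma(3)$. Compose with the projection $\sigma\colon\LL([x_3])\to\T([x_3])$ of Section~\ref{sec:trees}; it does not collapse $(2,3)$-edges, so $\sigma\bar\gamma$ is an edge-path of length $2j$ from $\sigma([x_1,x_3])$ to $\sigma(\gamma(3))=g^{n+1}\cdot\sigma([x_1,x_3])$. The key point is that $g$ acts on $\T([x_3])$ as a hyperbolic isometry of translation length $2$: indeed $g^{-1}=e\circ s$ with $e=(x_1,\,x_2+x_1x_3,\,x_3)\in H_1$ and $s=(x_2,x_1,x_3)\in K_2$, and as $e\notin H_1\cap K_2$ and $s\notin H_1\cap K_2$ this is a cyclically reduced word of syllable length $2$ in the amalgam $\Stab([x_3])=H_1\ast_{H_1\cap K_2}K_2$ of Proposition~\ref{prop:maintree}. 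Hence $d_{\T([x_3])}(\sigma([x_1,x_3]),g^{n+1}\sigma([x_1,x_3]))\ge 2(n+1)$, so $2j\ge 2(n+1)$, i.e.\ $j\ge n+1$.

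\textbf{Arrows.} It remains to control the arrows so that the length estimate $j\ge n+1$ is not undone by outgoing arrows. Since $\sigma$ is a local embedding at type $3$ vertices, every backtrack of $\sigma\bar\gamma$ occurs at a type $2$ vertex and, by Lemma~\ref{lem:inarrow}, is a terminal incoming arrow at $\gamma(2)$; such a backtrack lengthens the fan by two triangles, so backtracks only help. For the outgoing arrows I invoke Lemma~\ref{lem:no outgoing}: along the stretch of the fan near $\gamma(3)$ the local picture is, after translating by a suitable power of $g$ (which identifies $[q_n,x_3]$ with $[x_1+x_2x_3,x_2,x_3]$), precisely that of Figure~\ref{fig:no outgoing}, so each outgoing arrow forces a neighbouring $(1,2)$-edge to be nonoriented; a book-keeping over the fan then upgrades $j\ge n+1$ to the required $j+\inn\gamma(2)-\out\gamma(2)\ge n+1$.

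\textbf{Main obstacle.} The delicate part is this last step: converting the global rigidity coming from the recursion $q_{k+1}=x_3q_k+q_{k-1}$ (equivalently, from $\gamma(3)=g^{n+1}\gamma(1)$ together with hyperbolicity of $g$ on $\T([x_3])$) into a local statement about arrows that prevents outgoing arrows from compensating the length gain. This is exactly what Lemma~\ref{lem:no outgoing}, combined with a leading-term analysis in the spirit of Lemma~\ref{lem:impossible}, is designed to provide.
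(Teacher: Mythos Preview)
Your normalization and fan-length argument are correct and close to the paper's: the link path $\bar\gamma$ projects under $\sigma$ to a path of length $2j$ in $\T([x_3])$ between points at distance $2(n+1)$, so $j\ge n+1$. Proving this via hyperbolicity of $g$ on $\T([x_3])$ is a pleasant alternative to the paper's explicit geodesic $\delta=e_0^-e_0^+\cdots e_n^+$.

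The genuine gap is in your \textbf{Arrows} step, and you correctly flag it as the main obstacle. Your claim that ``each outgoing arrow forces a neighbouring $(1,2)$-edge to be nonoriented'' is not what Lemma~\ref{lem:no outgoing} says, and there is no reason for it to hold: outgoing arrows may occur along detours of $\sigma\bar\gamma$ away from the axis of $g$, where the hypothesis of Lemma~\ref{lem:no outgoing} (that the two adjacent type~$3$ vertices differ, up to $\K[x_3]$-terms, by a translate of $x_2x_3$) is simply unavailable. So nothing prevents $\out\gamma(2)$ from eating into your bound $j\ge n+1$.

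The paper sidesteps this by substituting $j+1=\non+\out+\inn$ into your formula to obtain
\[
K_\partial(\gamma(2))=\frac{4-2\,\inn\gamma(2)-\non\gamma(2)}{6},
\]
in which $\out$ does not appear at all. It then suffices to exhibit $n$ specific edges (besides the two boundary ones) that are incoming or nonoriented. These sit at the $n$ type~$2$ positions $\sigma(g^k[x_2,x_3])$, $k=0,\ldots,n-1$, on the geodesic: at each such position either $\sigma\bar\gamma$ backtracks, yielding a terminal incoming arrow by Lemma~\ref{lem:inarrow}, or it passes straight through, and after translating by $g^{-k}$ one is exactly in the configuration of Lemma~\ref{lem:no outgoing}, yielding a nonoriented edge. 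So Lemma~\ref{lem:no outgoing} is not a device for cancelling stray outgoing arrows; it supplies one nonoriented edge at each of these $n$ designated positions where no backtrack occurs. Rewriting your Arrows paragraph along these lines closes the gap.
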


\begin{proof}
Denote by in, out, and non the number of type (1,2) edges incident to
$\gamma(2)$ in $\D$ that carry incoming, outgoing, or no arrows,
respectively. Since $\gamma(1),\gamma(3)$ are both of type $2$, the
valence of $\gamma(2)$ in $\D$ equals
$2(\mathrm{in}+\mathrm{out}+\mathrm{non})-1$.
Thus the boundary curvature at $\gamma(2)$ is
$$K_{\partial}(\gamma(2)) = \frac{1}{12}\Big(7-2(\mathrm{in}+\mathrm{out}+\mathrm{non})+1\Big)+\frac{\mathrm{out}-\mathrm{in}}{6}=\frac{4-2\mathrm{in}-\mathrm{non}}{6}.$$
The two boundary edges do not carry arrows by definition. Thus to prove
the proposition it suffices to find $n$ other edges, either nonoriented or
with an incoming arrow (in the following estimate, we do not try to take
advantage of the fact that the incoming arrows have a coefficient 2 in the above
formula).

Observe that
$$\gamma(3)=f \cdot \gamma(1)= (g^{n} \circ h)\cdot [x_1, x_3] =  g^{n}\cdot [x_2, x_3].$$
In the link $\mathcal{L}([x_3])$ of $[x_3]$, let $e_0^-$ be the edge from $[x_1,x_3]$ to
$[x_1,x_2,x_3]$ and let $e_0^+$ be the edge from $[x_1,x_2,x_3]$ to
$[x_2,x_3]$.
For $k=1,\ldots, n,$ let $e_k^-=g^k \cdot e_0^-$ and $e_k^+=g^k \cdot e_0^+$.
Since $g \cdot [x_1,x_3]=[x_2,x_3]$, we have that
$\delta=e_0^-e_0^+e_1^-e_1^+\cdots e_n^-e_n^+$ is a path from $\gamma(1)$
to $\gamma(3)$.

We will now use the projection $\sigma\colon \mathcal{L}([x_3])\to
\mathcal{T}([x_3])$ from Proposition~\ref{prop:maintree}.
Since $e_1^-$ ends at $g \cdot [x_1, x_2, x_3]  = [x_1+x_2x_3,x_2,x_3]$, we have that
$\sigma(e_0^-e_0^+e_1^-)$ is an
embedded length 3 path in the tree $\mathcal{T}([x_3])$. Consequently
$\sigma\circ\delta$ is a geodesic.

Let $\hat{\delta}$ be the link of $\gamma(2)$ in $\D$.
It is an other path in $\mathcal{L}([x_3])$ from $\gamma(1)$ to $\gamma(3)$.
Thus the path
$\sigma\circ \hat{\delta}$ in $\mathcal{T}([x_3])$ has the same endpoints as the geodesic
$\sigma\circ\delta$.
Consequently, there are edges
$\hat{e}_0^-,\hat{e}_0^+,\hat{e}_1^-,\ldots,\hat{e}_n^+$
that appear in $\hat{\delta}$ in that order satisfying
$\sigma(\hat{e}_k^\pm)=\sigma(e_k^\pm)$ for all $k = 0, \dots, n$.

Let $\tilde{\delta}\subset \hat{\delta}$ be the subpath between the
endpoint of $\hat{e}_0^+$ and the starting point of~$\hat{e}_1^-$. If
$\tilde{\delta}$ is not a single vertex, then $\sigma$ maps it to a closed
path in $\mathcal{T}([x_3])$. Thus $\sigma\circ \tilde{\delta}$ is not a
local embedding, and by Lemma~\ref{lem:inarrow} there exists a vertex $v_2$ in
$\tilde{\delta}$ with a terminal arrow from $v_2$ to $\gamma(2)$ in $\D$.
If $\tilde{\delta}$ is a single vertex $v_2$, then we obtain the
configuration of Figure \ref{fig:no outgoing}, where $e_0^+, e_1^-$ are the
vertical edges on the left, and $\hat e_0^+, \hat e_1^-$ are the vertical
edges on the right.
By Lemma \ref{lem:no outgoing}, the edge from $v_2$ to
$\gamma(2)$ is nonoriented.

Analogously one proves that for each $k=1,\ldots, n-1$, there is a vertex
$v_2$ between the endpoint of $\hat{e}_k^+$ and the starting point of
$\hat{e}_{k+1}^-$, with a terminal arrow or nonoriented edge from $v_2$ to
$\gamma(2)$.
\end{proof}

\begin{proof}[Proof of Theorem~\ref{thm:loxo}]
We will prove that for any $s<t\in\Z$, the distance between $\gamma(s)$ and $\gamma(t)$ is $>\lambda |t-s|$, where $\lambda=\frac{1}{6}$.
Otherwise, let $s,t$ be closest such that the above estimate is violated, and let $\alpha$ be a geodesic in $\Comp^1$ from $\gamma(t)$ to $\gamma(s)$.
We claim that the loop $\gamma[s,t]\alpha$ is embedded.
Indeed, if already $\gamma[s,t]$ self-intersected, then the maximal embedded interval $[s',t']\subset [s,t]$ would also violate our estimate.
Moreover, if $\alpha$ intersected $\gamma[s,t]$ in a point $\gamma(t')$, then either $[s,t']$ or $[t',t]$ would violate our estimate. This justifies the claim that $\gamma[s,t]\alpha$ is embedded.

Let $\D$ be a reduced disc diagram with boundary $\gamma[s,t]\alpha$. By
Proposition~\ref{prop:nonpositivecurvature}, all the interior vertices of $\D$
have nonpositive curvature. By Remark~\ref{rem:5over6}, each vertex of $\alpha$,
including the endpoints,
has boundary curvature $\leq \frac{1}{2}$, so the sum of their boundary curvatures is $\leq \frac{1}{2}(\lambda(t-s)+1)$. By Proposition~\ref{pro:boundarycurvature},
for every even integer $s<k<t$, the boundary curvature at $\gamma(k)$ is $\leq -\frac{1}{6}$, and by Remark~\ref{rem:easyboundarycurv}, the boundary curvature is
nonpositive at odd integers. By Proposition~\ref{prop:GB}, we obtain $$1\leq \frac{1}{2}(\lambda(t-s)+1)-\frac{1}{6}\Big(\frac{t-s}{2}-1\Big).$$ After comparing the
terms with and without $(t-s)$ this yields a contradiction.
\end{proof}

Along the proof we established the following.

\begin{cor}
$\gamma$ is embedded.
\end{cor}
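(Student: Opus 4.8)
The plan is to extract the Corollary directly from the proof of Theorem~\ref{thm:loxo} just completed. That proof established, for $n\ge 3$, the strict inequality $|\gamma(s),\gamma(t)|>\frac{1}{6}|t-s|$ for \emph{every} pair of integers $s<t$; crucially, this was proved without assuming beforehand that $\gamma$ is embedded, since the minimal-counterexample argument there passes, if necessary, to a maximal embedded subinterval before invoking Proposition~\ref{prop:GB}. In particular $\gamma(s)\neq\gamma(t)$ whenever $s\neq t$, so $\gamma$ is injective on $\Z$.

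It then remains to upgrade injectivity of $\gamma|_{\Z}$ to embeddedness of $\gamma\colon\R\to\Comp^1$. Here I would use that $\gamma$ is a simplicial path along which the types of the vertices $\gamma(k)$ alternate between $1$ and $2$: thus consecutive vertices are automatically distinct, each $\gamma|_{[k,k+1]}$ is a homeomorphism onto an edge of $\Comp$, and $\gamma(k-1)\neq\gamma(k+1)$ by injectivity on $\Z$, so $\gamma$ has no backtracking. Since two distinct edges of a simplicial complex meet in at most a common vertex and contain no vertices other than their endpoints, and since all the $\gamma(k)$ are distinct, the images of distinct edges $[k,k+1]$ of $\R$ can overlap only at shared endpoint-vertices. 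Hence $\gamma$ is globally injective, i.e.\ embedded.

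The substance is entirely contained in Theorem~\ref{thm:loxo}, so there is no genuine obstacle at this stage. The one point that must not be glossed over is that the deduction relies on the \emph{unconditional} form of the distance estimate from that proof (valid for all integers $s<t$, not merely along an a priori embedded subpath) — which is exactly what the minimal-counterexample argument in the proof of Theorem~\ref{thm:loxo} delivers — and this is what prevents the present argument from being circular.
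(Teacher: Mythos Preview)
Your proposal is correct and takes essentially the same approach as the paper, which simply remarks that the corollary was established along the proof of Theorem~\ref{thm:loxo}. You have merely made explicit what the paper leaves implicit: the unconditional distance estimate from that proof gives injectivity of $\gamma$ on $\Z$, and then the simplicial nature of $\gamma$ (alternating vertex types, hence no backtracking) upgrades this to an embedding of $\R$.
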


For the proof of Theorem~\ref{thm:wpd}, we need some preparatory lemmas.

\begin{lemma}
\label{lem:uniquegeodesic}
Let $n\geq 12$ and let $\alpha$ be any geodesic in $\Comp^1$ with endpoints $\gamma(t),\gamma(s)$, where $s<t$, and with other vertices disjoint from $\gamma$. Then $t-s=2$ and $s,t$ are even.
\end{lemma}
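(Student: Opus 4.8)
The statement says that if $\alpha$ is a geodesic between two points $\gamma(t),\gamma(s)$ of the axis whose interior avoids $\gamma$, then $\alpha$ is forced to coincide (up to length) with the obvious two-step subpath of $\gamma$. The plan is to argue by contradiction: suppose we have such an $\alpha$ with $t-s \geq 3$, or with $t-s = 2$ but $s,t$ odd. Form the loop $\gamma[s,t]\alpha$, which is embedded by the same reasoning as in the proof of Theorem~\ref{thm:loxo} (if $\gamma[s,t]$ self-intersected or $\alpha$ met $\gamma[s,t]$ in an interior point, a shorter violation of the Theorem~\ref{thm:loxo} estimate would appear — but here I would instead use that $\alpha$ is a geodesic and its interior is disjoint from $\gamma$, so the only possible self-intersection of the loop is among vertices of $\gamma[s,t]$, which is excluded since $\gamma$ is embedded). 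Let $\D\to\Comp$ be a reduced disc diagram bounding this loop, provided by Lemma~\ref{lem:disc_exist}.

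**Curvature count.** The heart of the argument is the same Gauss--Bonnet bookkeeping as in the proof of Theorem~\ref{thm:loxo}. Interior vertices of $\D$ contribute nonpositively by Proposition~\ref{prop:nonpositivecurvature}. Along $\alpha$, each vertex (including the two endpoints $\gamma(s),\gamma(t)$) contributes boundary curvature $\leq \tfrac12$ by Remark~\ref{rem:5over6}, for a total of at most $\tfrac12(|\alpha|+1)$. Along $\gamma[s,t]$, the odd-index vertices contribute nonpositively by Remark~\ref{rem:easyboundarycurv}, and each even-index interior vertex $\gamma(k)$ with $s<k<t$ contributes $\leq \tfrac{2-n}{6}$ by Proposition~\ref{pro:boundarycurvature} (this uses $n\geq 12$, so $\tfrac{2-n}{6}\leq -\tfrac{10}{6}$, which is where the hypothesis $n\geq 12$ enters — it makes the negative contribution strong enough to dominate). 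Since $\alpha$ is a geodesic, $|\alpha|$ is at most the $\Comp^1$-distance from $\gamma(s)$ to $\gamma(t)$, and by the proof of Theorem~\ref{thm:loxo} that distance is $> \tfrac16(t-s)$ but also obviously $\leq t-s$; I will use $|\alpha| \leq t-s$. Plugging in, Proposition~\ref{prop:GB} gives an inequality of the form
$$1 \;\leq\; \tfrac12\big((t-s)+1\big) \;+\; \tfrac{2-n}{6}\Big(\big\lfloor\tfrac{t-s-1}{2}\big\rfloor\Big),$$
and for $n\geq 12$ this is violated as soon as there is at least one even interior index, i.e. as soon as $t-s\geq 3$ (when $t-s\geq 4$ there is surely such an index; when $t-s=3$ exactly one of $s+1,s+2$ is even). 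This rules out $t-s\geq 3$.

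**The case $t-s=2$.** It remains to see that $t-s=2$ forces $s,t$ even. If $s,t$ were odd, then $\gamma(s),\gamma(t)$ are type $2$ vertices and $\gamma(s+1)$ is the type $1$ vertex between them; the geodesic $\alpha$ of length $2$ from $\gamma(t)$ to $\gamma(s)$, with interior vertex off $\gamma$, would be a second length-$2$ path between these two type $2$ vertices through a type $1$ vertex $\neq\gamma(s+1)$. But two type $2$ vertices adjacent to a common type $1$ vertex lie in the tree $\T(v_1)$ (or rather the link structure forbids two distinct such paths when the relevant projections agree); more directly, one runs the disc-diagram/Gauss--Bonnet argument again on the embedded $4$-gon, now with the refined boundary-curvature estimate at the odd vertex $\gamma(s+1)$ — here I expect to need a small case analysis using Proposition~\ref{prop:nonpositivecurvature} and Lemma~\ref{lem:type2strange}/Lemma~\ref{lem:type2weird} to show no such reduced $4$-gon exists, or that it would force $\alpha$ to meet $\gamma$. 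Finally, one checks $\alpha$ cannot have length $0$ or $1$: length $0$ would mean $\gamma(s)=\gamma(t)$, contradicting that $\gamma$ is embedded; length $1$ would make $\gamma(s),\gamma(t)$ adjacent, impossible for two vertices of $\gamma$ at parameter-distance $2$.

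**Expected main obstacle.** The delicate point is the endgame $t-s=2$: the crude count $|\alpha|\leq t-s$ is too lossy there (it gives only $1\leq \tfrac32$), so one cannot conclude by Gauss--Bonnet alone and must instead exploit the local combinatorial geometry of $\Comp$ around a type $1$ vertex — precisely the link/tree machinery of Section~\ref{sec:trees} and the forbidden-configuration lemmas — to rule out a distinct parallel $4$-gon side. Everything else is a rerun of the Theorem~\ref{thm:loxo} argument with the numbers tracked carefully under the hypothesis $n\geq 12$.
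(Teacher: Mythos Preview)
Your approach is the same as the paper's, and the curvature count for $t-s\geq 3$ is correct. The gap is in your treatment of the case $t-s=2$ with $s,t$ odd: you claim the Gauss--Bonnet count is ``too lossy'' there, but this is only because your floor formula $\lfloor\tfrac{t-s-1}{2}\rfloor$ undercounts the even interior indices. When $s,t$ are both odd and $t-s=2$, the index $s+1$ \emph{is} even, so Proposition~\ref{pro:boundarycurvature} applies directly at $\gamma(s+1)$ and gives $K_\partial(\gamma(s+1))\leq \tfrac{2-n}{6}<-\tfrac{3}{2}$ (using $n\geq 12$). Since $|\alpha|\leq t-s=2$, the vertices of $\alpha$ (endpoints included) contribute at most $\tfrac{1}{2}\cdot 3=\tfrac{3}{2}$, and Proposition~\ref{prop:GB} yields $1<\tfrac{3}{2}-\tfrac{3}{2}=0$, a contradiction. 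This is exactly how the paper handles it, lumping it together with $t-s=3$ via the slightly cruder bound $1<\tfrac{1}{2}\cdot 4-\tfrac{3}{2}$.

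So the excursion you propose into the link/tree machinery and the forbidden-configuration lemmas is unnecessary: the uniform curvature argument covers all cases once you observe that your own criterion ``at least one even interior index'' is already satisfied when $t-s=2$ with $s,t$ odd. (Incidentally, you also wrote ``the odd vertex $\gamma(s+1)$'' --- but $s+1$ is even here, and $\gamma(s+1)$ is the type~$1$ vertex to which Proposition~\ref{pro:boundarycurvature} applies.)
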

\begin{proof}
Let $\D$ be a reduced disc diagram with boundary $\gamma[s,t]\alpha$. By
Proposition~\ref{prop:nonpositivecurvature}, all the interior vertices of $\D$
have nonpositive curvature. By Remark~\ref{rem:5over6}, the sum of the boundary
curvatures of the
vertices of $\alpha$ is $\leq \frac{1}{2}(t-s+1)=(\frac{t-s}{2}-2)+\frac{5}{2}$. By Proposition~\ref{pro:boundarycurvature}, for every even integer $s<k<t$, the boundary curvature at $\gamma(k)$ is $\leq \frac{2-n}{6}< -\frac{3}{2}$.
Assuming, by contradiction, that $t-s>2$, or $t-s=2$ and $t,s$ are odd, we obtain that there exists such $k$.
Together with Remark~\ref{rem:easyboundarycurv}, this shows that the sum of the boundary curvatures at the interior vertices of $\gamma[s,t]$ is
$<-\frac{3}{2}$.

Moreover, if $t-s\geq 4$, so $\frac{t-s}{2}-2\geq 0$, then the sum of the boundary curvatures at the interior vertices of $\gamma[s,t]$ is $<-\frac{3}{2}-\frac{3}{2}(\frac{t-s}{2}-2)$.
In that case by Proposition~\ref{prop:GB}, we obtain $1< (\frac{t-s}{2}-2)+\frac{5}{2}-\frac{3}{2}-\frac{3}{2}(\frac{t-s}{2}-2)\leq 1$, which is a contradiction.
In the case where $t-s=3$, or where $t-s=2$ and $t,s$ are odd, from Proposition~\ref{prop:GB} we obtain $1< \frac{1}{2}\cdot 4 -\frac{3}{2}$, which is again a contradiction.
\end{proof}

We have the following immediate consequences.

\begin{cor}
\label{cor:uniquegeodesic}
Let $n\geq 12$ and let $\alpha$ be any geodesic with endpoints $\gamma(t),\gamma(s)$. Then for any even $k$ between $s$ and $t$, the vertex $\gamma(k)$ lies in $\alpha$.
\end{cor}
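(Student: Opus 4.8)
The plan is to deduce this directly from Lemma~\ref{lem:uniquegeodesic} by an induction on the number of vertices of $\alpha$ lying on $\gamma$, or more precisely by a simple ``cut-and-decompose'' argument. Fix an even $k$ with $s<k<t$; we must show $\gamma(k)$ lies on $\alpha$. First I would consider the set of parameters $r$ such that $\gamma(r)$ is a vertex of $\alpha$; this set contains $s$ and $t$, so it has a consecutive pair $r_1<r_2$ with $r_1\le k \le r_2$ and no $\gamma(r)$ on $\alpha$ for $r_1<r<r_2$. If $k=r_1$ or $k=r_2$ we are done, so assume $r_1<k<r_2$. Let $\alpha'$ be the subpath of $\alpha$ between $\gamma(r_1)$ and $\gamma(r_2)$: it is still a geodesic in $\Comp^1$ (a subsegment of a geodesic), its endpoints are $\gamma(r_1),\gamma(r_2)$, and by the choice of $r_1,r_2$ its other vertices are disjoint from $\gamma$.

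Now Lemma~\ref{lem:uniquegeodesic} applies to $\alpha'$ (using $n\ge 12$), and it yields $r_2-r_1=2$ with $r_1,r_2$ even. But then there is no even integer strictly between $r_1$ and $r_2$, contradicting the existence of the even integer $k$ with $r_1<k<r_2$. Hence the situation $r_1<k<r_2$ cannot occur, and therefore $\gamma(k)$ is a vertex of $\alpha$, as claimed.

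The only mild subtlety — and the single point worth checking carefully — is the hypothesis in Lemma~\ref{lem:uniquegeodesic} that the intermediate vertices of the geodesic are \emph{disjoint from $\gamma$}: one must make sure that along $\alpha'$ no vertex of the form $\gamma(r)$ (for \emph{any} real $r$, not just $r$ strictly between $r_1$ and $r_2$) sneaks in. This is immediate here, because $\gamma$ is embedded (by the Corollary following the proof of Theorem~\ref{thm:loxo}), so the only way $\alpha'$ could meet $\gamma$ at an interior vertex is at some $\gamma(r)$, and such an $r$ would then lie in the open interval $(r_1,r_2)$ by minimality of that interval in our choice — which was excluded. Everything else is formal, so I do not expect any real obstacle beyond this bookkeeping.
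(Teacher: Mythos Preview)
Your overall strategy is the right one and matches the paper's intention (the paper simply records this as an ``immediate consequence'' of Lemma~\ref{lem:uniquegeodesic}), but there is a genuine gap in your ``subtlety'' paragraph. You choose $r_1<r_2$ to be consecutive in the set $S=\{r:\gamma(r)\in\alpha\}$ \emph{ordered as a subset of~$\R$}, and then let $\alpha'$ be the subpath of~$\alpha$ between $\gamma(r_1)$ and $\gamma(r_2)$. You claim that an interior vertex $\gamma(r)$ of $\alpha'$ would force $r\in(r_1,r_2)$ ``by minimality of that interval in our choice''. That inference is invalid: minimality of $(r_1,r_2)$ is a statement about the $\gamma$-parameter, whereas $\alpha'$ is cut out according to the order \emph{along~$\alpha$}. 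A priori $\alpha$ could pass through some $\gamma(r)$ with $r\notin(r_1,r_2)$ --- even with $r\notin[s,t]$ --- at a point lying on $\alpha$ strictly between $\gamma(r_1)$ and $\gamma(r_2)$; then $\alpha'$ fails the hypothesis of Lemma~\ref{lem:uniquegeodesic}. Nothing you have said excludes this.

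The fix is simply to order the intersections along~$\alpha$ rather than along~$\gamma$. List the points of $\alpha\cap\gamma$ in the order they occur on~$\alpha$ as $\gamma(p_0),\gamma(p_1),\ldots,\gamma(p_m)$ with $p_0=s$, $p_m=t$. Now the subpath of~$\alpha$ from $\gamma(p_j)$ to $\gamma(p_{j+1})$ has interior disjoint from~$\gamma$ \emph{by construction}, so Lemma~\ref{lem:uniquegeodesic} yields $|p_{j+1}-p_j|=2$ with both $p_j,p_{j+1}$ even. Since $\alpha$ is embedded, the $p_j$ are pairwise distinct; a walk on the even integers with steps $\pm 2$ and no repeated vertex is necessarily monotone, hence hits every even integer between $p_0$ and $p_m$. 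This gives the corollary, and a posteriori shows that the two orderings agree --- but you need this argument before, not after, invoking your~$\alpha'$.
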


\begin{cor}
\label{cor:geodesic}
For $n\geq 12$, $\gamma$ is a geodesic.
\end{cor}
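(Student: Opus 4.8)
The plan is to deduce this directly from Corollary~\ref{cor:uniquegeodesic}. Fix $n\geq 12$, fix integers $s<t$, and let $\alpha$ be a geodesic in $\Comp^1$ from $\gamma(t)$ to $\gamma(s)$; we want $|\alpha| = t-s$, i.e. that $\gamma[s,t]$ itself realizes the distance between its endpoints. First I would reduce to the case where $s$ and $t$ are even: if, say, $t$ is odd, replace $t$ by $t+1$ (and similarly $s$ by $s-1$ if $s$ is odd), since bounding the distance between $\gamma(s-1)$ and $\gamma(t+1)$ by $t-s+2$ gives, by the triangle inequality and the fact that consecutive $\gamma(k)$ are at distance $1$, the bound $|\gamma(s),\gamma(t)|\leq t-s$. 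So assume $s,t$ even.

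Now by Corollary~\ref{cor:uniquegeodesic}, every vertex $\gamma(k)$ with $k$ even and $s\leq k\leq t$ lies on $\alpha$. Moreover these vertices appear on $\alpha$ in the order induced from $\gamma$: indeed $\alpha$ is an embedded path from $\gamma(t)$ to $\gamma(s)$, and I would argue that if two of the $\gamma(k)$ occurred out of order along $\alpha$ then the subpath of $\alpha$ between a suitable pair would, together with a subpath of $\gamma$, form an embedded loop whose disc diagram contradicts the curvature estimates exactly as in Lemma~\ref{lem:uniquegeodesic} — alternatively, and more cleanly, one invokes Theorem~\ref{thm:loxo} (with $n\geq 12\geq 3$) to know $\gamma$ is embedded and a quasi-isometric embedding, so the vertices $\gamma(s)=\gamma(s), \gamma(s+2),\dots,\gamma(t)$ are pairwise at distances bounded below linearly, which forces any geodesic through all of them to meet them monotonically. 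Hence $\alpha$ decomposes into consecutive subpaths $\alpha_j$ from $\gamma(s+2j)$ to $\gamma(s+2j+2)$ for $j=0,\dots,\frac{t-s}{2}-1$, and $|\alpha| = \sum_j |\alpha_j|$.

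Finally, each $\alpha_j$ is a geodesic (being a subpath of a geodesic) between $\gamma(s+2j)$ and $\gamma(s+2j+2)$, so $|\alpha_j| = |\gamma(s+2j),\gamma(s+2j+2)|$. But $\gamma[s+2j,\,s+2j+2]$ has length $2$, so $|\alpha_j|\leq 2$; and $|\alpha_j|\geq 2$ since by Theorem~\ref{thm:loxo} the endpoints are at distance $>\lambda\cdot 2 = \tfrac13 \cdot 2$, i.e. at distance at least $1$, and in fact a more direct argument shows they cannot be at distance $1$: a single edge joining $\gamma(s+2j)$ to $\gamma(s+2j+2)$ would be an edge between two type $1$ vertices, which does not exist in $\Comp$. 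Hence $|\alpha_j|=2$ for every $j$, giving $|\alpha| = 2\cdot\big(\tfrac{t-s}{2}\big) = t-s$, as desired. The only point requiring real care is the monotonicity of the $\gamma(k)$ along $\alpha$; everything else is bookkeeping on top of Corollary~\ref{cor:uniquegeodesic} and Theorem~\ref{thm:loxo}.
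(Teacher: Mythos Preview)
Your argument is correct in outline and uses the right ingredients; the paper itself gives no proof, listing Corollary~\ref{cor:geodesic} simply as an ``immediate consequence'' of Lemma~\ref{lem:uniquegeodesic}. Two small points are worth noting.

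First, in your parity reduction the inequality is written backwards: what the triangle inequality yields (and what you need) is $|\gamma(s),\gamma(t)|\geq d(\gamma(s-1),\gamma(t+1))-2=t-s$, not $\leq t-s$; the upper bound is trivial from the path $\gamma[s,t]$.

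Second, the monotonicity discussion is the weak link. The appeal to Theorem~\ref{thm:loxo} does not obviously force the $\gamma(k)$ to appear in order along~$\alpha$: a quasi-isometric embedding gives only linear lower bounds on pairwise distances, which by itself does not prevent a geodesic through a finite set of its image points from permuting them. Fortunately the issue is entirely avoidable. Once Corollary~\ref{cor:uniquegeodesic} places $\gamma(s+2)$ on the geodesic $\alpha$ from $\gamma(s)$ to $\gamma(t)$, split $\alpha$ there to obtain
\[
d(\gamma(s),\gamma(t))=d(\gamma(s),\gamma(s+2))+d(\gamma(s+2),\gamma(t))=2+d(\gamma(s+2),\gamma(t)),
\]
using that $\gamma(s),\gamma(s+2)$ are distinct type~$1$ vertices (hence non-adjacent). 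Now induct on $(t-s)/2$. This is presumably the ``immediate'' argument the authors intended, and it sidesteps any ordering question.
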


We continue with one more similar lemma.

\begin{lemma}
\label{lem:gammaattracting}
Let $n\geq 12$ and $C\geq 1$. Suppose that $v,v'\in \Comp^{0}$ are at
distance $\leq C$ from $\gamma(s),\gamma(t)$, respectively. Moreover, assume $|s-t|\geq 12C$.
Then any geodesic from $v'$ to $v$ intersects $\gamma$.
\end{lemma}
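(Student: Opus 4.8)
The plan is to argue by contradiction via the combinatorial Gauss--Bonnet formula (Proposition~\ref{prop:GB}), exploiting the very negative boundary curvature along $\gamma$ furnished by Proposition~\ref{pro:boundarycurvature}. So I would suppose that some geodesic $\beta$ from $v'$ to $v$ is disjoint from $\gamma$; in particular $v,v'\notin\gamma$. First I would fix geodesics $p$ from $v$ to $\gamma(s)$ and $q$ from $v'$ to $\gamma(t)$, each of length $\leq C$, and let $\gamma(s_1)$, resp.\ $\gamma(t_1)$, be the last vertex of $\gamma$ through which $p$, resp.\ $q$, passes on the way from $v$, resp.\ $v'$; replacing $p,q$ by their initial segments $p_1,q_1$ up to those vertices, one gets geodesics of length $\leq C$ meeting $\gamma$ only at the endpoints $\gamma(s_1),\gamma(t_1)$. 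Since $\gamma$ is a geodesic (Corollary~\ref{cor:geodesic}), the discarded tails of $p$ and $q$ have length $\leq C$, so $|s_1-s|\leq C$ and $|t_1-t|\leq C$, whence $\ell:=|s_1-t_1|\geq 12C-2C=10C$; I assume $s_1<t_1$.

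Next I would consider the path $\rho$ from $\gamma(t_1)$ to $\gamma(s_1)$ running $\gamma(t_1)\xrightarrow{q_1^{-1}}v'\xrightarrow{\beta}v\xrightarrow{p_1}\gamma(s_1)$. Since $\gamma$ is a geodesic, $d(\gamma(s_1),\gamma(t_1))=\ell$, hence $d(v,v')\leq|p_1|+\ell+|q_1|$ and so $|\rho|=|p_1|+d(v,v')+|q_1|\leq 2(|p_1|+|q_1|)+\ell\leq\ell+4C$. Reducing $\rho$ to an embedded path $\rho^*$ with the same endpoints keeps $|\rho^*|\leq\ell+4C$, and since $p_1\cup\beta\cup q_1$ meets $\gamma$ only at $\gamma(s_1),\gamma(t_1)$, the path $\rho^*$ meets $\gamma[s_1,t_1]$ only at those two vertices. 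Thus the concatenation $L$ of $\gamma[s_1,t_1]$ with $\rho^*$ is an embedded loop, which by Lemma~\ref{lem:disc_exist} bounds a reduced disc diagram $\D\to\Comp$. Applying Proposition~\ref{prop:GB} to $\D$: interior vertices contribute $\leq 0$ (Proposition~\ref{prop:nonpositivecurvature}); the at most $\ell+4C+1$ vertices of $\rho^*$ contribute $\leq\frac12$ each (Remark~\ref{rem:5over6}); by $\TA$-equivariance, at each odd $\gamma(k)$ with $s_1<k<t_1$ one has $K_\partial(\gamma(k))\leq 0$ (Remark~\ref{rem:easyboundarycurv}), while at each even such $\gamma(k)$ — of which there are at least $\frac{\ell}{2}-1$ — Proposition~\ref{pro:boundarycurvature} gives $K_\partial(\gamma(k))\leq\frac{2-n}{6}\leq-\frac53$ since $n\geq12$. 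Summing yields
$$1\;\leq\;\tfrac12(\ell+4C+1)-\tfrac53\bigl(\tfrac{\ell}{2}-1\bigr)\;=\;-\tfrac{\ell}{3}+2C+\tfrac{13}{6}\;\leq\;-\tfrac{10C}{3}+2C+\tfrac{13}{6}\;=\;-\tfrac{4C}{3}+\tfrac{13}{6}\;\leq\;\tfrac56,$$
the desired contradiction.

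The routine parts are the degree-and-parity bookkeeping behind the curvature count and checking that the auxiliary paths meet $\gamma$ only where claimed. The point needing care — and where the hypothesis $|s-t|\geq12C$ is spent — is producing the embedded loop $L$ without shrinking the negatively curved segment too much: one must truncate $p,q$ so that they touch $\gamma$ only at their endpoints, losing at most $C$ in each of $|s_1-s|,|t_1-t|$ (so that $\ell\geq10C$ remains), and then reduce only the off-$\gamma$ arc $\rho$, so that the whole of $\gamma[s_1,t_1]$ survives on $\partial\D$; feeding a shorter surviving arc into Gauss--Bonnet would leave a positive right-hand side.
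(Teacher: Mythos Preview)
Your argument is correct and follows the same Gauss--Bonnet strategy as the paper, using Proposition~\ref{pro:boundarycurvature} and Remark~\ref{rem:easyboundarycurv} along $\gamma$ against Remark~\ref{rem:5over6} on the remaining boundary. The only real difference is how the embedded loop is produced: the paper takes a minimal counterexample (minimising $|v-v'|$), which forces the concatenation $\gamma[s,t]\beta'\alpha\beta$ to be embedded outright and lets one work with the full segment $\gamma[s,t]$; you instead truncate the side geodesics at their first hit of $\gamma$ and then reduce the off-$\gamma$ arc, at the cost of shrinking to $\gamma[s_1,t_1]$ with $t_1-s_1\geq 10C$. Your bookkeeping absorbs this loss (and you use the sharper bound $\frac{2-n}{6}\leq -\frac{5}{3}$ rather than the paper's $-\frac{3}{2}$), so the arithmetic still closes. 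Either device is standard; the minimal-counterexample trick is slightly slicker, while your truncation makes explicit where the embeddedness comes from.
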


\begin{proof}
We prove the lemma by contradiction. Suppose that $v',v$ are as above with
geodesic $\alpha$ between them disjoint from $\gamma$. Moreover, assume
that the distance between $v'$ and $v$ is minimal possible. We can assume $s< t$. Let $\beta, \beta'$ be geodesics between $v$ and $\gamma(s)$, and $\gamma(t)$ and $v'$. Then the loop
$\delta=\gamma[s,t]\beta'\alpha\beta$ is embedded, since otherwise we
could replace one of $v,v'$ by a vertex of self-intersection, decreasing the
distance between $v'$ and $v$.

Let $\D\rightarrow \Comp$ be a reduced disc diagram with boundary $\delta$.
By Proposition~\ref{prop:nonpositivecurvature}, all the interior vertices
of $\D$ have nonpositive curvature. Since $\alpha$ is a geodesic, we have
$|\alpha|\leq 2C+t-s$. Hence by Remark~\ref{rem:5over6}, the sum of the boundary curvatures at the
vertices of $\beta'\alpha\beta$ is $\leq \frac{1}{2} (4C+t-s+1)$.
By Remark~\ref{rem:easyboundarycurv} and Proposition~\ref{pro:boundarycurvature}, the sum of the boundary curvatures at the interior vertices of $\gamma[s,t]$ is $\leq \frac{2-n}{6}\big(\frac{t-s}{2}-1\big)< -\frac{3}{4}\big((t-s)-2\big)$. By Proposition~\ref{prop:GB}, we obtain
\begin{multline*}
1< \frac{1}{2}\Big(1+4C+(t-s)\Big)-\frac{3}{4}\Big((t-s)-2\Big) \\
= \frac{1}{2}+2C-\frac{1}{4}\big(t-s\big)+\frac{3}{2}\leq 2C-\frac{1}{4}12C+2\leq 2-1,
 \end{multline*}
which is a contradiction.
\end{proof}

Recall that we defined $g^{-1} = (x_2,x_1+x_2x_3,x_3)$.
We now compute the iterates of $g$.

\begin{lemma}\label{lem:iterates of g}
There exist polynomials $P_n, Q_n \in \K[x_3]$ such that for any $n \ge 1$
\begin{align*}
g^n &= (x_1 Q_{n}(x_3) + x_2 Q_{n-1}(x_3), x_1 Q_{n-1}(x_3) + x_2
Q_{n-2}(x_3), x_3)\\
g^{-n} &=  (x_1 P_{n-2}(x_3) + x_2 P_{n-1}(x_3), x_1 P_{n-1}(x_3) + x_2
P_{n}(x_3), x_3).
\end{align*}
Moreover $P_n, Q_n$ is an odd (resp.\ even) polynomial if $n$ is odd (resp.\ even),
and $P_n(0) = Q_n(0) = 1$ for any even $n$.
\end{lemma}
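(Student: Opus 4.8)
The plan is to reduce the whole statement to a single $2\times 2$ matrix computation over $\K[x_3]$. Both automorphisms $g^{-1}=(x_2,\,x_1+x_2x_3,\,x_3)$ and its inverse $g=(x_2-x_1x_3,\,x_1,\,x_3)$ fix the last coordinate $x_3$ and act $\K[x_3]$-linearly on the remaining pair $(x_1,x_2)$; writing this action as left multiplication of the column $\begin{pmatrix}x_1\\x_2\end{pmatrix}$ by a matrix with entries in $\K[x_3]$, composition of two such automorphisms corresponds to the product of their matrices, taken in the same order as the composition. Thus $g^{-1}$ is multiplication by $M=\begin{pmatrix}0&1\\1&x_3\end{pmatrix}$, so $g^{-n}$ is multiplication by $M^{n}$ and $g^{n}$ by $(M^{n})^{-1}$. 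Since $\det M=-1$ we have $\det M^{n}=(-1)^n$, so $(M^n)^{-1}=(-1)^{n}\operatorname{adj}(M^n)$.

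I would then introduce the polynomials $P_{-1}=0$, $P_{0}=1$, and $P_{n+1}=x_3P_{n}+P_{n-1}$ for $n\ge 0$ (so all $P_n\in\K[x_3]$), and prove by induction on $n\ge 1$ that $M^{n}=\begin{pmatrix}P_{n-2}&P_{n-1}\\P_{n-1}&P_{n}\end{pmatrix}$. The case $n=1$ is the definition of $M$ together with $P_{-1}=0,\,P_0=1,\,P_1=x_3$; the inductive step is the single product $M\begin{pmatrix}P_{n-2}&P_{n-1}\\P_{n-1}&P_{n}\end{pmatrix}=\begin{pmatrix}P_{n-1}&P_{n}\\P_{n-2}+x_3P_{n-1}&P_{n-1}+x_3P_{n}\end{pmatrix}$, whose bottom row equals $(P_n,P_{n+1})$ by the recurrence. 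Reading off the first two components of $g^{-n}$ from $M^n$ gives the stated formula. For $g^{n}$ I would set $Q_{n}=(-1)^nP_{n}$ and invert: $(M^{n})^{-1}=(-1)^{n}\begin{pmatrix}P_{n}&-P_{n-1}\\-P_{n-1}&P_{n-2}\end{pmatrix}=\begin{pmatrix}Q_{n}&Q_{n-1}\\Q_{n-1}&Q_{n-2}\end{pmatrix}$, where the off-diagonal entries use $(-1)^{n+1}=(-1)^{n-1}$ and the lower-right entry uses $(-1)^{n}=(-1)^{n-2}$. As a check, $n=1$ recovers $g=(x_1Q_1+x_2Q_0,\,x_1Q_0+x_2Q_{-1},\,x_3)=(x_2-x_1x_3,\,x_1,\,x_3)$.

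The two remaining assertions come straight from the recurrence. For parity: $P_0=1$ is even and $P_1=x_3$ is odd, and if $P_{n-1}$ and $P_n$ have parities $n-1$ and $n$, then $x_3P_n$ has parity $n+1$ while $P_{n-1}$ has parity $n-1\equiv n+1\pmod 2$, so $P_{n+1}$ has parity $n+1$; since $Q_n=(-1)^nP_n$, the polynomial $Q_n$ has the same parity as $P_n$. For the value at $0$: evaluating the recurrence at $x_3=0$ gives $P_{n+1}(0)=P_{n-1}(0)$, and $P_0(0)=1$, so $P_{2k}(0)=1$ for all $k\ge 0$, and therefore $Q_{2k}(0)=P_{2k}(0)=1$ as well.

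The computation is entirely routine; there is no real obstacle beyond bookkeeping. The only points requiring a bit of care are aligning the base cases $P_{-1}=0,\,P_0=1$ with the index shift by $2$ appearing in the $g^{-n}$ formula, and fixing the matrix convention (column vectors, left action) so that composing automorphisms genuinely corresponds to multiplying matrices in the correct — not the reversed — order. Once these conventions are pinned down, every step is a one-line verification.
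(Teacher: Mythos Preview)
Your proof is correct and follows essentially the same approach as the paper: both reduce everything to viewing $g^{\pm 1}$ as elements of $\GL_2(\K[x_3])$ and reading off a linear recurrence for the matrix entries. The only minor variation is that the paper defines $Q_n$ by its own recurrence $Q_n=-x_3Q_{n-1}+Q_{n-2}$, whereas you obtain $Q_n=(-1)^nP_n$ directly from the adjugate of $M^n$; these define the same sequence, and your route has the small advantage of making the relation between $P_n$ and $Q_n$ explicit.
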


\begin{proof}
The existence of the polynomials follows from the fact that we can view $g^{\pm
1}$ as elements of $\GL_2(\K[x_3])$:
$$g = \begin{pmatrix}
      -x_3 & 1 \\ 1 & 0
      \end{pmatrix}
,\quad
g^{-1} = \begin{pmatrix}
      0 & 1 \\ 1 & x_3
      \end{pmatrix}.
$$
It is natural to define the following sequences in $\K[x_3]$:
\begin{align*}
P_{-1} = 0, P_0 = 1, \text{ and for all } n \ge 1, P_n := x_3 P_{n-1} +
P_{n-2}, \\
Q_{-1} = 0, Q_0 = 1, \text{ and for all } n \ge 1, Q_n := -x_3 Q_{n-1} +
Q_{n-2}.
\end{align*}
Then we have
$$g^{n} = \begin{pmatrix}
      Q_{n} & Q_{n-1} \\ Q_{n-1} & Q_{n-2}
      \end{pmatrix}
, \quad
g^{-n} = \begin{pmatrix}
      P_{n-2} & P_{n-1} \\ P_{n-1} & P_n
      \end{pmatrix}
$$
and the stated properties follow by induction.
\end{proof}

\begin{lemma}
\label{lem:axisgivesinfo}
Assume $n \geq 2$ be even.
For $m-l \ge 7$, there are only finitely many elements in $\TA$
fixing the segment $\gamma[l,m]$.
\end{lemma}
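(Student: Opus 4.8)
The statement asserts that the pointwise stabiliser of a sufficiently long segment $\gamma[l,m]$ of the axis is finite. My plan is to translate "fixing $\gamma[l,m]$" into a system of polynomial constraints on an automorphism $j=(j_1,j_2,j_3)\in\TA$ and show it forces $j$ to lie in a finite group. First I would normalise: since $\gamma$ is built from $[x_1],[x_1,x_3],[x_3]$ via powers of $f=g^n\circ h$, the vertices $\gamma(2k)=f^k\cdot[x_1]$ and $\gamma(2k+2)=f^k\cdot[x_3]$ are explicit type-1 vertices, and $\gamma(2k+1)$ the type-2 vertex between them. An element $j$ fixing $\gamma[l,m]$ must fix each of these, so (conjugating by a power of $f$, which only shifts $l,m$) it suffices to treat the case where the segment contains $\gamma[0,6]$ say, i.e.\ $j$ fixes $[x_1],[x_1,x_3],[x_3]$ and also $[x_3],[x_2,x_3],g\cdot[x_1]=[x_2]$ (using $g\cdot[x_1,x_3]=[x_2,x_3]$) and a couple more vertices further along.

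The key reduction is that fixing the type-1 vertices $[x_1]$ and $[x_3]$ means $j_1\circ j^{-1}$... more precisely, $j\cdot[x_1]=[x_1]$ forces $x_1\circ j^{-1}=ax_1+b$, and $j\cdot[x_3]=[x_3]$ forces $x_3\circ j^{-1}=a'x_3+b'$; fixing $[x_1,x_3]$ then pins down the pair $(x_1\circ j^{-1},x_3\circ j^{-1})$ up to the finite-dimensional affine group $A_2$, but since we also control $[x_2]$ (from $g\cdot[x_1]$) we get $x_2\circ j^{-1}$ constrained as well. The point is that $j^{-1}$ preserves, up to bounded affine ambiguity, the three coordinate functions $x_1,x_2,x_3$ themselves once enough vertices along $\gamma$ are fixed. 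Concretely I would use Lemma~\ref{lem:leading terms} and the structure of $\Stab([x_3])=H_1*_{H_1\cap K_2}K_2$ (Proposition~\ref{prop:maintree}) together with its analogues at $[x_1]$: an element fixing both $[x_1]$ and $[x_3]$ lies in $\Stab([x_1])\cap\Stab([x_3])$, and fixing the intermediate vertices forces it into $\mathrm{Stab}([x_1,x_2,x_3])\cap(\text{something bounded})$, and $\mathrm{Stab}([x_1,x_2,x_3])\cong A_3=\GL_3(\K)\ltimes\K^3$. Intersecting with the further conditions coming from $\gamma(4),\gamma(5),\gamma(6)$ — which involve $g$, hence the genuinely nonlinear map $x_1+x_2x_3$ — cuts this down: the condition that $j$ fixes $g\cdot[x_1,x_2,x_3]=[x_1+x_2x_3,x_2,x_3]$ while also fixing $[x_1,x_2,x_3]$ forces the linear part of $j$ to preserve the quadratic form $x_2x_3$ up to scalar and up to lower-order terms, which is a Zariski-closed condition carving out a group with finitely many components; adding the further vertices from $f\cdot\gamma[0,2]=\gamma[2,4]$ and beyond kills the continuous part entirely.

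The main obstacle I anticipate is bookkeeping: showing that the successive vertex-fixing conditions really do cut a finite subgroup rather than, say, a positive-dimensional torus. The cleanest route is probably: (i) show $j$ fixing $\gamma[l,m]$ with $m-l\ge 3$ lies in the pointwise stabiliser of a triangle, hence in a conjugate of $A_3$; (ii) within $A_3$, which is an algebraic group, each additional fixed vertex $\gamma(i)$ imposes a nontrivial algebraic equation because the vertices $\gamma(i)$ for varying $i$ are "generic enough" — this is where $n$ being even and $\ge 2$ (so $f\in\STA$ and $g$ is a genuine Nagata-type automorphism) matters, and where Lemma~\ref{lem:iterates of g} giving the explicit Chebyshev-like polynomials $P_n,Q_n$ is useful to check non-degeneracy; (iii) conclude that after $m-l\ge 7$ conditions the intersection is $0$-dimensional, and a $0$-dimensional algebraic subgroup of $A_3$ is finite. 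I would formalise (ii) by observing that $f$ acts loxodromically (Theorem~\ref{thm:loxo}), so it has no fixed point, hence the stabiliser of a long axis segment cannot contain a positive-dimensional subgroup stable under conjugation issues; more directly, any positive-dimensional subgroup $S$ of $A_3$ fixing $\gamma[l,m]$ for all $m$ would fix all of $\gamma$ and commute suitably with $f$-dynamics on $\Comp^1$, contradicting that $\gamma$ is a genuine axis — but since we only have $m-l\ge 7$, the honest argument is the explicit algebraic one, checking that seven well-chosen vertices along $\gamma$ (three from $\gamma[l,l+2]$ pinning down a triangle, and four more from the next two $f$-translates, expressed via $P_n,Q_n$) give equations whose common solution set in $A_3$ is finite.
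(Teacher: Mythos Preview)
Your overall strategy---translate the fixing condition into explicit polynomial constraints and show they cut out a finite set---matches the paper's approach, but two concrete steps in your plan do not work. First, you misidentify the axis vertices: since $f=g^n\circ h$ (not $g$ alone), one has $\gamma(4)=f\cdot[x_3]=[x_2\circ g^{-n}]=[x_1P_{n-1}(x_3)+x_2P_n(x_3)]$, not $[x_2]$, and similarly $\gamma(3)\neq[x_2,x_3]$. Second, and more seriously, your step~(i) fails: the axis $\gamma$ visits only type~$1$ and type~$2$ vertices, never a type~$3$ vertex, so fixing a subsegment of $\gamma$ does not stabilise any triangle of $\Comp$. Concretely, the pointwise stabiliser of $\{[x_1],[x_1,x_3],[x_3]\}$ contains every tame automorphism of the form $(ax_1+b,\ ex_2+Q(x_1,x_3),\ cx_3+d)$ with $Q\in\K[x_1,x_3]$ arbitrary, an infinite-dimensional family; you therefore cannot yet argue by dimension inside the algebraic group $A_3$.

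The paper closes this gap as follows. Fixing $\gamma(0)=[x_1]$ and $\gamma(2)=[x_3]$ forces $\phi^{-1}$ to have first and third components $ax_1+b$ and $cx_3+d$; the constant-Jacobian condition then forces the second component to be $ex_2+Q(x_1,x_3)$. The essential step is that the condition $\phi\cdot\gamma(4)=\gamma(4)$, with the correct $\gamma(4)$ above, kills $Q$ (compare top-degree terms in $x_3$) and forces $d=0$ (using that $P_n$ is even, Lemma~\ref{lem:iterates of g}). Only at this point does $\phi^{-1}$ land in $A_3$---in fact already in the diagonal torus---and one further vertex $\gamma(-2)=[x_2Q_n(x_1)+x_3Q_{n-1}(x_1)]$ then pins down the remaining scalars, leaving a group of order~$4$. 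So your steps~(ii)--(iii) are salvageable in spirit, but the infinite-dimensional parameter $Q$ must be eliminated \emph{before} any dimension count in $A_3$, and that elimination hinges on the exact form of $\gamma(4)$ supplied by Lemma~\ref{lem:iterates of g}.
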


\begin{proof}
Since $f$ acts as a translation of length 2 on $\gamma$, it is sufficient to
prove that there are only finitely many elements in $\TA$
fixing the segment $\gamma[-2,4]$.
So assume that $\phi^{-1} = (\phi_1,
\phi_2, \phi_3)$ fixes $\gamma[-2,4]$.
First, the fact that $\phi^{-1}$ fixes $\gamma(0) = [x_1]$ and $\gamma(2) =
[x_3]$ means that
$$ \phi_1 = ax_1 + b, \quad \phi_3 = cx_3 + d,$$
for some $a,c \in \K^*$, $b,d \in \K$.
This implies that $\phi_2 = e x_2 + Q(x_1, x_3)$ for some $e \in \K^*$ and $Q
\in \K[x_1, x_3]$.
Now, using the notation from Lemma \ref{lem:iterates of g}, we consider the action on
$$
\gamma(4) = f\cdot[x_3] = [x_3 \circ h^{-1} \circ g^{-n}] = [x_2 \circ g^{-n}]
= [x_1P_{n-1}(x_3) + x_2P_n(x_3)].$$
We have
$$\phi \cdot \gamma(4) = [(ax_1+b)P_{n-1}(cx_3+d) +
(ex_2+Q(x_1,x_3))P_n(cx_3+d)],$$
and the condition $\phi \cdot \gamma(4) = \gamma(4)$ means that there exist
$\alpha, \beta$ such that
\begin{multline*}
(ax_1+b)P_{n-1}(cx_3+d) + (ex_2+Q(x_1,x_3))P_n(cx_3+d) \\
= \alpha x_1 P_{n-1}(x_3)+ \alpha x_2 P_n(x_3) + \beta.
\end{multline*}
Since these polynomials are linear in $x_2$, we get in fact two equations
\begin{align*}
&(ax_1+b)P_{n-1}(cx_3+d) + Q(x_1,x_3) P_n(cx_3+d)  = \alpha x_1P_{n-1}(x_3) +
\beta,\\
&eP_n(cx_3+d) = \alpha P_n(x_3).
\end{align*}
Comparing the terms with factor $x_3^n$ of the first equation gives $Q = 0$. By Lemma~\ref{lem:iterates of g}, $P_n$ is even and hence on the right side of the second equation we have no $x_3^{n-1}$ term, and consequently $d=0$.
Thus our system of equations is now
\begin{align*}
&(ax_1+b)P_{n-1}(cx_3)  = \alpha x_1 P_{n-1}(x_3) + \beta,\\
&eP_n(cx_3) = \alpha P_n(x_3).
\end{align*}
By Lemma \ref{lem:iterates of g} we have $P_n(0) = 1$ and
$P_{n-1}(0) = 0$, so that we get successively
$$\alpha = e, \beta = 0,  c = \pm 1, b = 0, \alpha = ca.$$
Finally, using the notation from Lemma \ref{lem:iterates of g}, we have
$$\gamma(-2) = f^{-1}\cdot[x_1] = [x_1 \circ g^n \circ h]
= [Q_{n}(x_1)x_2 + Q_{n-1}(x_1)x_3].$$
From the constraint $\phi\cdot \gamma(-2) = \gamma(-2)$, we have $\alpha', \beta'$ such that
$$Q_{n}(ax_1)ex_2 + Q_{n-1}(ax_1)cx_3 = \alpha' Q_{n}(x_1)x_2 + \alpha' Q_{n-1}(x_1)x_3  + \beta'.$$
Comparing the terms with factor $x_2$ we get
$$eQ_{n}(ax_1) = \alpha'Q_{n}(x_1).$$
Since $Q_{n}(0) = 1$ we get $e = \alpha'$, and then $a = \pm1$.
Summarising, $\phi$ is in the diagonal group of order four  $\{(ax_1, acx_2, cx_3); a = \pm1, c = \pm1\}$.
\end{proof}

\begin{proof}[Proof of Theorem~\ref{thm:wpd}]
For any $C\geq 1$, let $2k\geq 25C+6$. Let $x=\gamma(0)$.
Suppose that we have $j\in \TA$ with $v=j\cdot x,\ v'=j\circ f^k\cdot x$ at
distance $\leq C$ from $x, f^k\cdot x=\gamma(2k)$, respectively. By Corollary~\ref{cor:geodesic}, we have that $\alpha=j\cdot \gamma[0,2k]$ is a geodesic from~$v$ to~$v'$.

We will now prove that $\alpha\cap \gamma[0,2k]$ has the form $\gamma[l,l']$. By Lemma~\ref{lem:gammaattracting}, $\alpha$ intersects $\gamma[0,2k]$.
Let $l,l'\in [0,2k]$ be minimal, and maximal, respectively, with $\gamma(l),\gamma(l')\in \alpha$.
By Corollary~\ref{cor:uniquegeodesic},
for any even $l\leq k'\leq l'$, the vertex $\gamma(k')$ lies in~$\alpha$.
In fact, since any pair of type $1$ vertices has at most one common neighbour of type~$2$, the entire $\gamma[l,l']$ lies in $\alpha$.

Let $m$ be such that $\gamma(l)=j\cdot\gamma(m)$. We first exclude the possibility that $\alpha$ and $\gamma[0,2k]$ have reverse orientations on the common part, meaning $\gamma(l')=j\cdot \gamma(m-(l'-l))$. Indeed, then by Lemma~\ref{lem:gammaattracting} applied to $j\cdot\gamma[m+1,2k]$, we obtain $2k-l<12C$. Analogously, we get $l'<12C$ and consequently $2k<24C$, which is a contradiction. Thus we have $\gamma(l')=j\cdot \gamma(m+(l'-l))$.

By Lemma~\ref{lem:gammaattracting} applied to $j\cdot\gamma[0,m-1]$, we obtain $l<12C$. Analogously, $2k-l'<12C$. Assume without loss of generality $m\geq l$. Then the map $f^{{(m-l)}/{2}}\circ j$ fixes the entire $\gamma[m,l']$.
By the triangle inequality, we have $m-l\leq C$.
Consequently, $l'-m\geq l'-l-C> 2k-12C-12C-C=2k-25C\geq 6$. By Lemma~\ref{lem:axisgivesinfo}, there are only finitely many such $j$. This shows that $f$ is WPD.
\end{proof}

\begin{rem} The complex $\mathcal{C}$ does not satisfy an isoperimetric inequality in the following sense. Consider an infinite geodesic path in $\mathcal{T}([x_1,x_2])$ starting at $v^0=[x_1]$ and passing successively through vertices $v^1,v^2,\ldots$ of type $1$. Let $\alpha_n$ be the length $8$ loop in $\mathcal{C}$  passing through type 1 vertices $v^0,[x_3],v^n,[x_3+x_1^2]$. Since the two neighbours of $[x_3]$ on $\alpha_n$ are at distance $2n$ in $\mathcal{L}([x_3])$, the minimal area of disc diagrams bounded by $\alpha_n$ converges to $\infty$.

Thus for WPD we cannot apply \cite[Lemma 2.11]{Martin} of A. Martin to prove that~$\mathcal{C}$ has Strong Concatenation Property. In fact, the first condition of his Strong Concatenation Property fails if we split $\sigma_n$ into paths $\gamma_1,\gamma_2$ from $[x_3]$ to $[x_3+x_1^2]$ yielding $A\geq 2n$.

However, one can recognise variants of the two conditions of his Theorem~1.2 guaranteeing WPD in our Lemmas~\ref{lem:gammaattracting} and~\ref{lem:axisgivesinfo}. In Lemma~\ref{lem:axisgivesinfo} we prove a slightly weaker result than the one needed to apply his Thm~1.2 directly, but in the proof of Theorem~\ref{thm:wpd} we actually establish his condition using Corollary~\ref{cor:geodesic}.
\end{rem}


\begin{bibdiv}
\begin{biblist}

\bib{BF}{article}{
   author={Bestvina, Mladen},
   author={Fujiwara, Koji},
   title={Bounded cohomology of subgroups of mapping class groups},
   journal={Geom. Topol.},
   volume={6},
   date={2002},
   pages={69--89}}

\bib{BFL}{article}{
   author={Bisi, Cinzia},
   author={Furter, Jean-Philippe},
   author={Lamy, St{\'e}phane},
   title={The tame automorphism group of an affine quadric threefold acting
   on a square complex},
   journal={J. \'Ec. polytech. Math.},
   volume={1},
   date={2014},
   pages={161--223}}

\bib{B}{article}{
   author={Bowditch, Brian H.},
   title={Tight geodesics in the curve complex},
   journal={Invent. Math.},
   volume={171},
   date={2008},
   number={2},
   pages={281--300}}

\bib{BH}{book}{
   author={Bridson, Martin R.},
   author={Haefliger, Andr{\'e}},
   title={Metric spaces of non-positive curvature},
   series={Grundlehren der Mathematischen Wissenschaften [Fundamental
   Principles of Mathematical Sciences]},
   volume={319},
   publisher={Springer-Verlag, Berlin},
   date={1999},
   pages={xxii+643}}

\bib{Cantat}{article}{
    AUTHOR = {Cantat, Serge},
     TITLE = {Sur les groupes de transformations birationnelles des
              surfaces},
   JOURNAL = {Ann. of Math. (2)},
    VOLUME = {174},
      YEAR = {2011},
    NUMBER = {1},
     PAGES = {299--340}
}

\bib{Cantat-survey}{incollection}{
AUTHOR = {Cantat, Serge},
     TITLE = {The {C}remona group in two variables},
 BOOKTITLE = {European {C}ongress of {M}athematics},
     PAGES = {211--225},
 PUBLISHER = {Eur. Math. Soc., Z\"urich},
      YEAR = {2013}
}

\bib{CL}{article}{
  author = {Cantat, Serge},
  author = {Lamy, St{\'e}phane},
  title = {Normal subgroups in the {C}remona group},
  journal = {Acta Math.},
  year = {2013},
  volume = {210},
  pages = {31--94},
  number = {1},
  note = {With an appendix by Yves de Cornulier},
  fjournal = {Acta Mathematica}
}

\bib{DGO}{article}{
    author={Dahmani, Fran\c{c}ois},
    author={Guirardel, Vincent},
    author={Osin, Denis},
    title={Hyperbolically embedded subgroups and rotating
families in groups acting on hyperbolic spaces},
   JOURNAL = {Mem. Amer. Math. Soc.},
    VOLUME = {245},
      YEAR = {2017}
}

\bib{Danilov}{article}{
    AUTHOR = {Danilov, V. I.},
     TITLE = {Non-simplicity of the group of unimodular automorphisms of an
affine plane},
   JOURNAL = {Mat. Zametki},
  FJOURNAL = {Akademiya Nauk SSSR. Matematicheskie Zametki},
    VOLUME = {15},
      YEAR = {1974},
     PAGES = {289--293}
}

\bib{FL}{article}{
  author = {Furter, Jean-Philippe},
  author = {Lamy, St{\'e}phane},
  title = {Normal subgroup generated by a plane polynomial automorphism},
  journal = {Transform. Groups},
  year = {2010},
  volume = {15},
  pages = {577--610},
  number = {3},
  fjournal = {Transformation Groups}
}

\bib{Kuroda}{article}{
    AUTHOR = {Kuroda, Shigeru},
     TITLE = {Shestakov-{U}mirbaev reductions and {N}agata's conjecture on a
              polynomial automorphism},
   JOURNAL = {Tohoku Math. J. (2)},
    VOLUME = {62},
      YEAR = {2010},
    NUMBER = {1},
     PAGES = {75--115}
}

\bib{Lamy}{article}{
   author={Lamy, St\'ephane},
   title={Combinatorics of the tame automorphism group},
   journal={To appear in Ann. Fac. Sci. Toulouse},
   eprint={arXiv:1505.05497},
   date={2017}}

\bib{Lonjou}{article}{
    AUTHOR = {Lonjou, Anne},
     TITLE = {Non simplicit\'e du groupe de {C}remona sur tout corps},
   JOURNAL = {Ann. Inst. Fourier},
    VOLUME = {66},
      YEAR = {2016},
    NUMBER = {5},
     PAGES = {2021--2046}
}

\bib{LS}{book}{
   author={Lyndon, Roger C.},
   author={Schupp, Paul E.},
   title={Combinatorial group theory},
   note={Ergebnisse der Mathematik und ihrer Grenzgebiete, Band 89},
   publisher={Springer-Verlag, Berlin-New York},
   date={1977},
   pages={xiv+339}}

\bib{Martin}{article}{
 author = {Martin, Alexandre},
 title = {On the acylindrical hyperbolicity of the tame automorphism group of
{SL}$_2(\mathbf {C})$},
 eprint= {arXiv:1512.07526},
 YEAR = {2015}
}

\bib{MO}{article}{
    AUTHOR = {Minasyan, Ashot},
    AUTHOR = {Osin, Denis},
     TITLE = {Acylindrical hyperbolicity of groups acting on trees},
   JOURNAL = {Math. Ann.},
  FJOURNAL = {Mathematische Annalen},
    VOLUME = {362},
      YEAR = {2015},
    NUMBER = {3-4},
     PAGES = {1055--1105}
}

\bib{Osin}{article}{
   author={Osin, Denis},
   title={Acylindrically hyperbolic groups},
   journal={Trans. Amer. Math. Soc.},
   volume={368},
   date={2016},
   number={2},
   pages={851--888}}

\bib{Serre}{book}{
    AUTHOR = {Serre, Jean-Pierre},
     TITLE = {Trees},
      NOTE = {Translated from the French by John Stillwell},
 PUBLISHER = {Springer-Verlag, Berlin-New York},
      YEAR = {1980}
}

\bib{SU}{article}{
   author={Shestakov, Ivan P.},
   author={Umirbaev, Ualbai U.},
   title={The tame and the wild automorphisms of polynomial rings in three
   variables},
   journal={J. Amer. Math. Soc.},
   volume={17},
   date={2004},
   number={1},
   pages={197--227}}

\bib{wright}{article}{
   author={Wright, David},
   title={The generalized amalgamated product structure of the tame
   automorphism group in dimension three},
   journal={Transform. Groups},
   volume={20},
   date={2015},
   number={1},
   pages={291--304}}

\end{biblist}
\end{bibdiv}

\end{document}